\renewcommand{\QED}{\hfill $\boxtimes\hspace{-1.725ex}\boxplus$}
\begin{document}
\setcounter{page}{1}
\issue{ }

\title{On  Axiomatizability of the Multiplicative Theory of Numbers}

\address{\sl \textsc{Saeed Salehi}, Research Institute for Fundamental Sciences \textup{(}RIFS\textup{)}, University of Tabriz,  P.O.Box~51666--16471, Bahman 29$^\text{th}$ Boulevard, Tabriz, IRAN.}

\author{
Saeed Salehi\thanks{
This research was partially  supported  by a  grant from $\mathbb{IPM}$ (No. 91030033).
}\\
Research Institute for Fundamental Sciences \textup{(RIFS)},
University of Tabriz, \\ P.O.Box~51666--16471,   Bahman 29$^\text{th}$ Boulevard, Tabriz, IRAN\\
School of Mathematics,  Institute for Research in Fundamental Sciences \textup{(}$\mathbb{IPM}$\textup{)},\\
P.O.Box 19395--5746,  Niavaran, Tehran, IRAN\\
http://saeedsalehi.ir/ \; salehipour{@}tabrizu.ac.ir  \, saeedsalehi{@}ipm.ir \\
} \maketitle

\runninghead{\copyright\ \textup{\sf 2017} \textsc{Saeed~Salehi}   }{  On Axiomatizability of the Multiplicative Theory of Numbers}

\begin{abstract}
The multiplicative theory of a set of numbers (which could be natural, integer, rational, real or complex numbers)  is the first-order  theory of the structure of that set with (solely) the multiplication operation (that set is taken to be multiplicative, i.e., closed under multiplication). In this paper we study the multiplicative theories  of the complex, real and (positive) rational numbers. These theories (and also the multiplicative theories of natural and integer numbers) are known to be decidable (i.e., there exists an algorithm that decides whether a given sentence is derivable form the theory); here we present  explicit axiomatizations for them and show that they are not finitely axiomatizable. For each of these sets (of complex, real and [positive] rational numbers) a language, including the multiplication operation, is introduced in a way that it allows quantifier elimination (for the theory of that set).
\end{abstract}

\begin{keywords}
Decidability; Completeness; Multiplicative Theory; Quantifier Elimination.

\textbf{2010 AMS MSC}:
03B25, 
03C10, 
03D35, 
03C65. 
\end{keywords}

\section{Introduction}
Providing a (complete and computably decidable) axiomatization for  mathematical structures is one goal of mathematical logic. This is closely related to the problem of the (computable)  decidability of (the theory of) a given mathematical  structure, since by the (computable) enumerability of all the formulas (provided that the language of the structure is a computably decidable set), provability of a sentence or its unprovability (which is equivalent to the provability of its negation in complete axiomatizations) can be decided algorithmically in a finite number of steps. Thus by presenting a complete and computably  decidable axiomatization for a structure, the computable decidability of the theory of that structure is proved. While the mere knowledge of the decidability of  the theory of a structure does not provide us with an explicit axiomatization (for the theory of that structure) and also leaves open the problem of the finite axiomatizability of that structure.

In this paper we study the theories  of the sets of complex, real and (positive) rational numbers with the multiplication operation.
The multiplicative structure of the complex numbers, i.e., $\langle\mathbb{C};\times\rangle$,  is decidable (and completely and  computably axiomatizable) by Tarski's theorem which states that the (additive and multiplicative) theory of the  complex numbers ($\langle\mathbb{C};+,\times\rangle$) is decidable and can be (completely) axiomatized  by the theory of {\em algebraically closed fields of characteristic $0$} (see e.g.  \cite[Section~IV of Chapter~4]{kk} or \cite[Corollary~2.2.9]{marker} or \cite[Theorem~21.9]{monk}).
Here, we axiomatize the theory of this structure directly (without using Tarski's theorem) and show that it cannot be axiomatized  by any finite number of sentences. The same holds for the multiplicative theory of the real numbers, $\langle\mathbb{R};\times\rangle$: it is also decidable (and completely and  computably axiomatizable) by Tarski's theorem which states that the (additive and multiplicative) theory of the  real numbers ($\langle\mathbb{R};+,\times\rangle$) is decidable and can be (completely) axiomatized  by the theory of {\em real closed \textup{(}ordered\textup{)} fields} (see e.g.  \cite[Section~V of Chapter~4]{kk} or \cite[Corollary~3.3.16]{marker} or \cite[Theorem~21.36]{monk}). Again an explicit axiomatization for the theory of this structure is provided here, in which the addition operation is not used.

The decidability of the multiplicative structure of the non-zero  rational numbers was announced by A.~Mostowski in \cite{mostowski} where he mentions that ``the elementary theory of multiplication of rationals different from $0$'' is the weak power of the additive theory of ``all integers (positive and negative)''. Here, ``the elementary theory'' means `the first-order theory'. The decidability of this theory is claimed to had been  proved (beforehand) by W.~Szmielew in \cite{szmielew}.
We firstly note that the weak power of the additive theory of integers, $\langle\mathbb{Z};+\rangle$, is the multiplicative theory of the {\em positive} rational numbers, $\langle\mathbb{Q}^+;\times\rangle$; not the whole (non-zero) rational numbers. Secondly, the multiplicative theory of the positive rational numbers has not been studied in \cite{szmielew} (indeed it appears in none of Szmielew's works). However, Mostowski's results in \cite{mostowski} imply the decidability of the multiplicative theory of the positive rational numbers $\langle\mathbb{Q}^+;\times\rangle$. In the last section of this paper, we give a direct proof of this fact with an explicit axiomatization, and show that the theory of this structure is not finitely axiomatizable. Along the way, for technical reasons, we also study the additive theories  of the sets of integer, rational, real and complex numbers, $\langle\mathbb{Z};+\rangle$, $\langle\mathbb{Q};+\rangle$, $\langle\mathbb{R};+\rangle$ and $\langle\mathbb{C};+\rangle$ (for $\langle\mathbb{N};+\rangle$ see e.g.~\cite[Theorem~32A]{enderton}).
The present paper is an extended, much improved and corrected version of the conference paper~\cite{salehi}.

\subsection{Some Preliminaries}
Let $\mathbb{P}$ denote the set of all (natural) prime numbers and denote the $i^{\rm th}$  prime number by $\mathfrak{p}_i$  (so we have   $\mathfrak{p}_0=2,\mathfrak{p}_1=3,\mathfrak{p}_2=5,\cdots$). Every natural number other than $0,1$ has a unique factorization into a product of prime numbers (by the fundamental theorem of arithmetic); this holds for every negative integer other than $-1$ too. Likewise, every rational number other than $-1,0,1$ has a unique factorization into a product of prime numbers in which the exponents could be negative; for example $\frac{175}{84}$ which becomes $\frac{25}{12}$ after simplification can be written as $2^{-2}\cdot 3^{-1} \cdot 5^2$. The symbols $\times$ and $\cdot$ are used interchangeably throughout the paper. For convenience, we make the convention  that $0^{-1}=0$ and we will see that this does not contradict our intuition with the axioms used below. Needless to say $x^n$ symbolizes $x\cdot x \cdot \ldots \cdot x$ ($n-$times) and also $x+x+\cdots +x$ ($n-$times) is abbreviated as $n\centerdot x$.
The main tool for the process of quantifier elimination is the following result which can be found in e.g. \cite[Theorem~31F]{enderton} or
\cite[Theorem~1 in Chapter~4]{kk} or \cite[Lemma~3.1.5]{marker} or  \cite[Lemma~4.1]{smorynski}.

\begin{lemma}[The Main Lemma of Quantifier Elimination]\label{mainlem}
A theory (or a structure) admits quantifier elimination if and only if every formula of the form $\exists x (\bigwedge\hspace{-1.85ex}\bigwedge_{i}\alpha_i)$ is (recursively) equivalent with  a quantifier-free formula, where each $\alpha_i$ is either an atomic formula or the negation of an atomic formula.
\end{lemma}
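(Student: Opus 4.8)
The plan is to prove the non-trivial (``if'') direction by induction on the structure of formulas; the ``only if'' direction is immediate, since if the theory (or structure) admits quantifier elimination then in particular every formula of the displayed shape $\exists x(\bigwedge_i\alpha_i)$ has a quantifier-free equivalent.

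First I would recall that it suffices to produce, for every formula $\varphi$, a quantifier-free formula $\varphi^{*}$ with $\varphi\equiv\varphi^{*}$ (modulo the theory, or in the structure), and to do so effectively. The base case is trivial, an atomic $\varphi$ being already quantifier-free. For the Boolean connectives the induction is routine: if $\varphi^{*}$ and $\psi^{*}$ are quantifier-free equivalents of $\varphi$ and $\psi$, then $\neg\varphi^{*}$ and $\varphi^{*}\wedge\psi^{*}$ work for $\neg\varphi$ and $\varphi\wedge\psi$, and the remaining connectives reduce to these. Since $\forall x\,\varphi$ is equivalent to $\neg\exists x\,\neg\varphi$, the whole argument comes down to the existential-quantifier step.

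For $\exists x\,\varphi$: by the induction hypothesis $\varphi$ has a quantifier-free equivalent $\theta$. I would then rewrite $\theta$ in disjunctive normal form, $\theta\equiv\bigvee_{j}\psi_j$ with each $\psi_j$ a conjunction of literals (atomic formulas and negated atomic formulas). Distributing the existential quantifier over the disjunction gives $\exists x\,\varphi\equiv\bigvee_{j}\exists x\,\psi_j$, and each $\exists x\,\psi_j$ is exactly of the form to which the hypothesis applies, hence has a quantifier-free equivalent; the disjunction of these is then a quantifier-free equivalent of $\exists x\,\varphi$, completing the induction. Finally I would note that the recursiveness claim is clear: conversion to disjunctive normal form, distribution of $\exists$ over $\vee$, and the passage from $\exists x\,\psi_j$ to its quantifier-free equivalent are all effective (the last by the ``recursively equivalent'' hypothesis), and the induction on formula complexity terminates, so $\varphi\mapsto\varphi^{*}$ is an algorithm.

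The only point that needs genuine care is the existential step — choosing the normal form and observing that $\exists$ distributes over $\vee$, so that the hypothesis can be invoked disjunct-by-disjunct; everything else is bookkeeping. (One should also keep in mind the harmless convention, in case the language has no constant symbols, of permitting a dummy variable or a trivially true/false atom so that quantifier-free formulas without free variables still make sense.)
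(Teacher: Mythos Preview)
Your proof is correct and uses essentially the same approach as the paper: both reduce the problem to the existential case via $\forall x\,\varphi\equiv\neg\exists x\,\neg\varphi$, convert the matrix to disjunctive normal form, distribute $\exists$ over $\vee$, and invoke the hypothesis on each disjunct. The only cosmetic difference is organizational---the paper first passes to prenex normal form and then peels off the innermost quantifier, whereas you run a direct structural induction on formulas---but the key steps are identical.
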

\begin{proof}
Every formula $\psi$ can be written (equivalently) in the prenex normal form, say $${\sf Q}_1x_1{\sf Q}_2x_2\cdots{\sf Q}_nx_n\theta(x_1,x_2,\cdots,x_n),$$ where ${\sf Q}_i$'s are quantifiers and $\theta$ is quantifier-free. If ${\sf Q}_n=\exists$ then let $\theta'=\theta$ and if ${\sf Q}_n=\forall$ then let $\theta'=\neg\theta$ (note that in the latter case $\forall x_n\theta \equiv \neg\exists x_n\theta'$). Now, the quantifier-free formula $\theta'$ can be written in the disjunctive normal form, say $\bigvee\hspace{-1.85ex}\bigvee_i
\bigwedge\hspace{-1.85ex}\bigwedge_{j}\alpha_{i,j}$ where each $\alpha_{i,j}$ is a literal (i.e., an atomic or a negated atomic formula). Noting that $\exists x (\bigvee\hspace{-1.85ex}\bigvee_i\beta_i)\equiv
\bigvee\hspace{-1.85ex}\bigvee_i
\exists x \beta_i$  we have $$\psi \equiv {\sf Q}_1x_1{\sf Q}_2x_2\cdots{\sf Q}_{n-1}x_{n-1}\Box\bigvee\hspace{-2.5ex}\bigvee_i\exists x_n(\bigwedge\hspace{-2.5ex}\bigwedge_j\alpha_{i,j})$$ where $\Box$ is nothing (empty) when ${\sf Q}_n=\exists$ and $\Box=\neg$ when ${\sf Q}_n=\forall$. Now, if $\exists x_n(\bigwedge\hspace{-1.85ex}\bigwedge_j\alpha_{i,j})$ is equivalent with a quantifier-free formula, then $\psi$ is equivalent with a formula with one less quantifier; continuing this way one can show that $\psi$ is equivalent with a formula which has no quantifier.
\end{proof}

\section{The Multiplicative Theory of the Complex Numbers}
For axiomatizing $\langle\mathbb{C};\times\rangle$ we do not need the addition operation ($+$) and in fact  it is not definable from multiplication: the multiplicative automorphism $z\mapsto z^{-1}$ (for $z\neq 0$ and $0\mapsto 0$) does not preserve the addition operation. Indeed, there exists a nice axiomatization for the multiplicative theory of the complex numbers which will be presented below.
\begin{definition}[Roots of Unity]\label{root1}
For any natural number $n\!\geqslant\!2$ let $\boldsymbol\omega_n=\cos(2\boldsymbol\pi/n)
+\boldsymbol\imath^{\!^{\boldsymbol\prime}}
\sin(2\boldsymbol\pi/n)$. So, all the $n^{\rm th}$ roots of unity are the complex numbers
$\{1,\boldsymbol\omega_n,\boldsymbol\omega_n^2,
\cdots,
\boldsymbol\omega_n^{n-1}\}$. 
\hfill $\oplus\hspace{-1.725ex}\otimes$
\end{definition}
Let us note that $\boldsymbol\omega_2\!=\!-1, \boldsymbol\omega_3\!=\!(-1/2) + \boldsymbol\imath^{\!^{\boldsymbol\prime}} (\sqrt{3}/2)$ and  $\boldsymbol\omega_4\!=\!\boldsymbol\imath^{\!^{\boldsymbol\prime}}$.
\begin{theorem}[Infinite Axiomatizablity of $\langle\mathbb{C};\times\rangle$]\label{thm-c}
The following theory
completely axiomatizes the multiplicative theory of the complex numbers and, moreover, the infinite structure $\langle\mathbb{C};\times,\circ^{-1},{\bf 0},{\bf 1},
\boldsymbol\omega_2,\boldsymbol\omega_3,\boldsymbol\omega_4,
\cdots\rangle$ admits quantifier elimination, and so has a decidable theory.
\begin{center}
\begin{tabular}{ll}
($\texttt{M}_1$) \; $\forall x,y,z\,\big(x\cdot(y\cdot
z)=(x\cdot y)\cdot z\big)$  & ($\texttt{M}_2$) \; $\forall x\,\big(x\cdot \mathbf{1}=x\big)$ \\
($\texttt{M}_3$) \; $\forall x\,\big(x\neq \mathbf{0}\longrightarrow
 x\cdot x^{-1}=\mathbf{1}\big)$ & ($\texttt{M}_4$) \; $\forall x,y\,\big(x\cdot y=y\cdot x\big)$ \\
 ($\texttt{M}_5$) \; $\forall x\,\big(x\cdot \mathbf{0}=\mathbf{0}=\mathbf{0}^{-1}\big)$ & ($\texttt{M}_{6,n}$) \; $\bigwedge\hspace{-1.85ex}\bigwedge_{i<j<n}
 (\boldsymbol\omega_n)^i\neq (\boldsymbol\omega_n)^j$ \\
($\texttt{M}_{7,n}$) \; $\forall
x\,\big(x^n=\mathbf{1}
\longleftrightarrow\bigvee\hspace{-1.85ex}\bigvee_{i<n}x=
(\boldsymbol\omega_n)^i\big)$ & ($\texttt{M}_{8,n}$) \; $\forall x \, \exists y \, \big(y^n=x\big)$ \\
\end{tabular}
\end{center}
Where $n>1$ is a natural number.
\end{theorem}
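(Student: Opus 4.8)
The plan is to prove the theorem in three stages: first show the listed axioms are true in $\langle\mathbb{C};\times\rangle$, then establish quantifier elimination for the expanded structure, and finally deduce completeness. The first stage is routine: axioms $\texttt{M}_1$--$\texttt{M}_5$ are just the group-with-absorbing-zero axioms plus our convention $\mathbf{0}^{-1}=\mathbf{0}$; $\texttt{M}_{6,n}$ says the powers $1,\boldsymbol\omega_n,\dots,\boldsymbol\omega_n^{n-1}$ are pairwise distinct, which is immediate from Definition~\ref{root1}; $\texttt{M}_{7,n}$ says these are exactly the solutions of $x^n=\mathbf{1}$, which is the fundamental theorem of algebra applied to $X^n-1$ (degree $n$, so at most $n$ roots, and we exhibited $n$ of them); and $\texttt{M}_{8,n}$ says every complex number has an $n^{\text{th}}$ root, again by the fundamental theorem of algebra (and trivially for $\mathbf{0}$).

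For quantifier elimination I would invoke Lemma~\ref{mainlem}, so it suffices to eliminate the quantifier from $\exists x\,\bigwedge_i\alpha_i$ where each $\alpha_i$ is a literal. In the multiplicative language with $\circ^{-1}$, the constants $\mathbf{0},\mathbf{1},\boldsymbol\omega_n$, every term reduces (using commutativity, associativity, $\texttt{M}_2$, $\texttt{M}_3$, $\texttt{M}_5$) to a monomial: either the constant $\mathbf{0}$, or a product of a power of $x$ (with a possibly negative integer exponent) with powers of the remaining variables, possibly with a constant factor. Hence each atomic formula $s=t$ becomes, after clearing, something of the shape $x^k = \mu$ where $\mu$ is a term not involving $x$ (here $k\in\mathbb{Z}$, and one must handle separately the degenerate cases where $x$ does not occur, or where $\mathbf{0}$ is involved, since on $\mathbf{0}$ the cancellation laws fail). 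The key algebraic facts needed are: (i) $x^k=\mu$ with $k>0$ is solvable for $x$ by $\texttt{M}_{8,k}$, and once one solution $x_0$ exists all solutions are $x_0\cdot(\boldsymbol\omega_k)^i$ by $\texttt{M}_{7,k}$; (ii) a negated equation $x^k\neq\mu$ excludes at most $k$ values of $x$; and (iii) a conjunction of equations $x^{k_1}=\mu_1,\dots,x^{k_m}=\mu_m$ can be reduced, via the Euclidean algorithm on the exponents $k_1,\dots,k_m$, to a single equation $x^d = \nu$ (with $d=\gcd$) together with a quantifier-free consistency condition on the $\mu_j$'s expressing that the original system has a common solution. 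Combining: $\exists x$ of a conjunction of one equation $x^d=\nu$ ($d\geq 1$) and finitely many inequations is true iff $\nu$ has a $d^{\text{th}}$ root (automatic, so vacuous) and the at most $d$ solutions are not all forbidden by the inequations — and ``there are more than $N$ elements satisfying $x^d=\nu$'' type statements are handled because $\texttt{M}_{7,d}$ pins the solution set down to exactly $d$ elements, so the count is decided by a quantifier-free formula comparing the $\mu$'s and $\nu$'s and the roots of unity. The case with no equation, only inequations $x^{k_j}\neq\mu_j$, is true iff the finitely many excluded values do not exhaust $\mathbb{C}$, which always holds since $\mathbb{C}$ is infinite, so the existential is equivalent to $\mathbf{1}=\mathbf{1}$; one must also separately treat literals forcing $x=\mathbf{0}$ or $x\neq\mathbf{0}$.

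The main obstacle I anticipate is the bookkeeping in stage two: organizing the case analysis so that the degenerate behavior at $\mathbf{0}$ (where $x\cdot x^{-1}=\mathbf{1}$ fails and $x^k$ for $k<0$ collapses) is handled uniformly, and carrying out the gcd-reduction of a system of power equations while producing an explicit quantifier-free side condition. Everything must be done syntactically from the axioms $\texttt{M}_1$--$\texttt{M}_{8,n}$, not just semantically in $\mathbb{C}$, so one needs to check that each equivalence used is provable from the theory; the axioms were chosen precisely so that $\texttt{M}_{6,n},\texttt{M}_{7,n},\texttt{M}_{8,n}$ supply exactly the needed facts about $n^{\text{th}}$ roots and roots of unity, and $\texttt{M}_1$--$\texttt{M}_5$ supply the normal-form manipulations. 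Once quantifier elimination is established, completeness follows immediately: the theory proves or refutes every variable-free sentence (every closed term evaluates, via the normal-form reduction, to one of $\mathbf{0},\mathbf{1}$, or a root of unity, and equalities among these are decided by $\texttt{M}_{6,n}$ and the obvious relations), hence by quantifier elimination proves or refutes every sentence, so it is complete; since $\langle\mathbb{C};\times\rangle$ is a model and the axioms are computably enumerable, the theory equals the full multiplicative theory of $\mathbb{C}$ and is decidable.
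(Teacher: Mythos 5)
Your proposal is correct and follows essentially the same route as the paper: normal-forming every term to $x^k\cdot t$ with separate case analysis at $\mathbf{0}$, a Euclidean/gcd reduction of the system of power equations to a single $x^n=t$ plus quantifier-free consistency conditions, use of $\texttt{M}_{6,n}$ and $\texttt{M}_{7,n}$ to dispose of the inequation-only case and to pin down solution sets, and $\texttt{M}_{8,n}$ for solvability. The one piece of bookkeeping you flag but leave implicit is supplied in the paper by the provable equivalence $x^m\neq s\leftrightarrow x^{mn}\neq s^n\vee\bigvee_{0<i<n}x^m=s(\boldsymbol\omega_n)^i$ (for $s\neq\mathbf{0}$), which, in the presence of $x^n=t$, turns each inequation into the $x$-free condition $t^m\neq s^n$ or into further equations, so the "not all roots are forbidden" count never has to name a particular $n$-th root.
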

\begin{proof}
By $\texttt{M}_1,\texttt{M}_2,\texttt{M}_3$ and $\texttt{M}_4$ we have $(u\cdot v)^{-1}=u^{-1}\cdot v^{-1}$ for any $u,v\neq{\bf 0}$; by $\texttt{M}_5$   this holds even when any of $u$ or $v$ equals to ${\bf 0}$. So, every term involving $x$ is equal to $x^{k}\cdot t$ for some $x$-free term $t$ (i.e., $x$ does not appear in $t$) and for some $k\in\mathbb{Z}-\{{\bf 0}\}$. Therefore, every atomic formula involving $x$ is equivalent with  $x^k\cdot t=x^m\cdot u$ for some $x$-free terms $t,u$  and some $\langle k,m\rangle \in\mathbb{N}^2-\{\langle 0,0\rangle\}$. If  $k\geqslant m>0$ then this atomic formula is equivalent with $$(x={\bf 0})\vee (x\neq{\bf 0}\wedge t={\bf 0}\wedge u={\bf 0})\vee (x\neq{\bf 0}\wedge t\neq{\bf 0}\wedge x^{k-m}=u\cdot t^{-1}),$$
and if $k\geqslant m=0$ then it is equivalent with $$(t={\bf 0}\wedge u={\bf 0})\vee(t\neq{\bf 0}\wedge x^k=u\cdot t^{-1}).$$
Also, the negated atomic formula $x^k\cdot t\neq x^m\cdot u$, when $k\geqslant m>0$, is equivalent with $$(x\neq{\bf 0}\wedge t={\bf 0}\wedge u\neq {\bf 0})\vee (x\neq{\bf 0}\wedge t\neq{\bf 0}\wedge x^{k-m}\neq u\cdot t^{-1}),$$ and when $k\geqslant m=0$ is equivalent with $$(t={\bf 0}\wedge u\neq{\bf 0})\vee(t\neq{\bf 0}\wedge x^k\neq u\cdot t^{-1}).$$
So, by the Main Lemma (\ref{mainlem}) it suffices to show that every formula of the form
\begin{equation}\label{c-1}
\exists x ( \bigwedge\hspace{-2.5ex}\bigwedge_{i<\ell} x^{n_i}=t_i \;\wedge\;  \bigwedge\hspace{-2.7ex}\bigwedge_{j<k} x^{m_j}\neq s_j )
\end{equation}
is equivalent with a quantifier-free formula, where $t_i$'s and $s_j$'s are $x$-free terms and $n_i$'s and $m_j$'s are positive natural numbers.
If $\ell=0$ then the formula
\eqref{c-1}, that is $\exists x (\bigwedge\hspace{-1.85ex}\bigwedge_{j<k} x^{m_j}\neq s_j)$, follows from $\texttt{M}_{6,n}$ and $\texttt{M}_{7,n}$  (and so it is equivalent with the quantifier-free formula ${\bf 0}={\bf 0}$): by $\texttt{M}_{6,n}$'s there are infinitely many elements and by $\texttt{M}_{7,n}$ (for $n=m_j$) there are at most finitely many $x$'s with $x^{m_j}=s_j$ for each $j<k$. Whence, let us suppose that $\ell>0$. If there are some $i,j<\ell$ such that $n_i<n_j$ then $(x^{n_i}\!=\!t_i\wedge x^{n_j}\!=\!t_j) \equiv (t_i\!=\!{\bf 0}\wedge x\!=\!{\bf 0} \wedge t_j\!=\!{\bf 0})\vee(t_i\neq {\bf 0}\wedge x^{n_i}\!=\!t_i\wedge x^{n_j-n_i}\!=\!t_j\cdot t_i^{-1})$. So, we can assume that for some $n>0$ we have $n_i=n$ for all $i<\ell$. Then, for $t=t_0$, the formula~\eqref{c-1}  is equivalent with the conjunction of the formula $\bigwedge\hspace{-1.85ex}\bigwedge_{i<\ell}t_i=t$ with the following formula
\begin{equation}\label{c-2}
\exists x ( x^{n}=t \;\wedge\;  \bigwedge\hspace{-2.7ex}\bigwedge_{j<k} x^{m_j}\neq s_j )
\end{equation}
whose equivalence with a quantifier-free formula is proved below. Let us note that if $k=0$ then \eqref{c-2} follows from $\texttt{M}_{8,n}$ (and so is equivalent with ${\bf 0}={\bf 0}$). Whence, let us suppose that $k>0$. By  $\texttt{M}_{6,n}$ and  $\texttt{M}_{7,n}$ we have the equivalence  $x^m\neq s \longleftrightarrow x^{mn}\neq s^n\vee
\bigvee\hspace{-1.85ex}\bigvee_{0<i<n}x^m=s(\boldsymbol\omega_n)^i$  for all complex numbers $x,s$ with $s\neq 0$ and all  natural numbers $m,n$. Thus,  $\theta \wedge x^n=t \wedge x^m\neq s$ is equivalent with
$\big[s={\bf 0}\wedge \theta \wedge x^n=t \wedge x\neq{\bf 0}\big] \vee \big[s\neq{\bf 0} \wedge \theta \wedge x^n=t \wedge \big(x^{mn}\neq s^n\vee
\bigvee\hspace{-1.85ex}\bigvee_{0<i<n}x^m=s
(\boldsymbol\omega_n)^i\big)\big]$.
The second disjunct is equivalent with

\medskip

\begin{tabular}{rc}
$\big[s\neq{\bf 0} \wedge \theta \wedge x^n=t \wedge \big(x^{mn}\neq s^n\vee
\bigvee\hspace{-1.85ex}\bigvee_{0<i<n}x^m=s
(\boldsymbol\omega_n)^i\big)\big]$ & $\equiv$ \\
$(s\neq{\bf 0} \wedge \theta \wedge x^n=t \wedge x^{mn}\neq s^n) \vee
\bigvee\hspace{-1.85ex}\bigvee_{0<i<n}\big(s\neq{\bf 0} \wedge \theta \wedge x^n=t \wedge x^m=s(\boldsymbol\omega_n)^i\big)$ & $\equiv$ \\
$(s\neq{\bf 0} \wedge \theta \wedge x^n=t \wedge t^m\neq s^n) \vee
\bigvee\hspace{-1.85ex}\bigvee_{0<i<n}\big(s\neq{\bf 0} \wedge \theta \wedge x^n=t \wedge x^m=s(\boldsymbol\omega_n)^i\big)$.  &
\end{tabular}

\medskip

\noindent Continuing this way (by eliminating the inequalities ---other than $x\neq{\bf 0}$--- one by one) we see that all we need to do is to eliminate the quantifier of the following form of formulas
\begin{equation}\label{c-3}
\exists x (\bigwedge\hspace{-2.5ex}\bigwedge_{i<\ell} x^{n_i}=t_i) \textrm{\quad or \quad} \exists x (x\neq{\bf 0}\wedge \bigwedge\hspace{-2.5ex}\bigwedge_{i<\ell} x^{n_i}=t_i)
\end{equation}
where $n_i$'s are positive natural numbers and $t_i$'s are $x$-free terms (i.e., $x$ does not appear in them). Just like the way we reached at \eqref{c-2} from \eqref{c-1} we can also see that the formulas \eqref{c-3} are equivalent with the conjunctions of an $x$-free formula with a formula of the form $\exists x (x^n=t)$ or $\exists x (x\neq{\bf 0}\wedge x^n=t)$ for some positive integer $n$ and some $x$-free term $t$. Above we noted that $\exists x (x^n=t)$ follows from $\texttt{M}_{8,n}$ and so is equivalent with the quantifier-free formula ${\bf 0}={\bf 0}$; thus $\exists x (x\neq{\bf 0}\wedge x^n=t)$ is equivalent with the quantifier-free formula $t\neq{\bf 0}$ as well.
\end{proof}

Now we show that the multiplicative theory of the complex numbers is not finitely axiomatizable.

\begin{definition}[Two Additive Sub-Structures of the Rational Numbers]\label{def-a}
Let $m\in\mathbb{N}$ be a positive integer ($m>0$). Put
$$\mathbb{Z}/m =\{\dfrac{a}{m} \mid a\in\mathbb{Z}\}, \quad  \textrm{and}$$
$$\mathbb{Q}/m =\{\dfrac{a}{m^k} \mid a\in\mathbb{Z},k\in\mathbb{N}\}.$$
These are additive (i.e., closed under addition) subsets of $\mathbb{Q}$.
\hfill $\oplus\hspace{-1.725ex}\otimes$
\end{definition}

Let us note that $\mathbb{Q}/m$ is also closed under the operations $x\mapsto x/d$ for any  $d$ which divides  $m$.

\begin{theorem}[No Finite Axiomatization for  $\langle\mathbb{C};\times\rangle$]\label{thm-infc}
The theory of the structure $\langle\mathbb{C};\times\rangle$ is not finitely axiomatizable.
\end{theorem}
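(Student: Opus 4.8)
The plan is to establish the equivalent statement that the complete theory $T=\mathrm{Th}(\langle\mathbb{C};\times\rangle)$ is not equivalent to any single sentence, using Theorem~\ref{thm-c} as a black box. For $N>1$ let $\mathcal{A}_{\leqslant N}$ be the finite fragment $\{\texttt{M}_1,\dots,\texttt{M}_5\}\cup\{\texttt{M}_{6,n},\texttt{M}_{7,n},\texttt{M}_{8,n}\mid 1<n\leqslant N\}$ of the axiomatization in Theorem~\ref{thm-c}, and let $T_N$ be the set of $\{\times\}$-sentences provable from $\mathcal{A}_{\leqslant N}$. Since by Theorem~\ref{thm-c} the full family $\{\texttt{M}_i\}_i$ completely axiomatizes the expanded structure, and a $\{\times\}$-sentence holds in $\langle\mathbb{C};\times\rangle$ exactly when it holds in that expansion, completeness together with compactness yields $T=\bigcup_N T_N$ with $T_0\subseteq T_1\subseteq\cdots$. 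Consequently, if for every $N$ there is a $\{\times\}$-structure satisfying $T_N$ but not $T$, then no $\sigma\in T$ axiomatizes $T$ (choose $N$ with $\sigma\in T_N$ and use the corresponding witness), and hence $T$ is not finitely axiomatizable.

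To produce the witnesses, for each prime $q$ put $\mathcal{N}_q=\langle\,\{\mathbf{0}\}\cup(\mathbb{C}^{\times}\oplus\mathbb{Z}/q\mathbb{Z})\,;\ \cdot\,\rangle$, where $\cdot$ extends the multiplicatively written group operation of $\mathbb{C}^{\times}\oplus\mathbb{Z}/q\mathbb{Z}$ by declaring $\mathbf{0}$ absorbing. I claim $\mathcal{N}_q\models T_N$ whenever $q>N$. To see this, expand $\mathcal{N}_q$ to the signature of $\mathcal{A}_{\leqslant N}$ by interpreting $\mathbf{1}$ as the group identity, $x^{-1}$ as the group inverse (with $\mathbf{0}^{-1}=\mathbf{0}$), and each $\boldsymbol\omega_n$ ($1<n\leqslant N$) as a generator of the subgroup $\{x\mid x^n=\mathbf{1}\}$. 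Because $\gcd(n,q)=1$ for every $n$ with $1<n\leqslant N<q$, this subgroup equals $(\mathbb{C}^{\times})[n]$, which is cyclic of order $n$, and moreover $\mathbb{C}^{\times}\oplus\mathbb{Z}/q\mathbb{Z}$ is $n$-divisible; a direct check then confirms that the expansion satisfies $\texttt{M}_1,\dots,\texttt{M}_5$ and $\texttt{M}_{6,n},\texttt{M}_{7,n},\texttt{M}_{8,n}$ for every $n\leqslant N$, so it models $\mathcal{A}_{\leqslant N}$; passing to the $\{\times\}$-reduct gives $\mathcal{N}_q\models T_N$. On the other hand $\mathcal{N}_q\not\equiv\langle\mathbb{C};\times\rangle$: the $\{\times\}$-sentence asserting that $x^q=\mathbf{1}$ has exactly $q$ solutions ($\mathbf{1}$ being $\{\times\}$-definable) holds in $\langle\mathbb{C};\times\rangle$ but fails in $\mathcal{N}_q$, where $\{x\mid x^q=\mathbf{1}\}=(\mathbb{C}^{\times})[q]\oplus\mathbb{Z}/q\mathbb{Z}$ has $q^2$ elements; in particular $\mathcal{N}_q\not\models T$. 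Thus the structures $\mathcal{N}_q$ (with $q>N$) are the required witnesses, and $T$ is not finitely axiomatizable.

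An alternative packaging avoids the chain of theories: fix a non-principal ultrafilter $\mathcal{U}$ on the primes; then, after removing its (definable) absorbing element, $\prod_q\mathcal{N}_q/\mathcal{U}$ is a divisible abelian group in which the elements killed by $n$ form a cyclic group of order $n$ for every $n$ (each finite set of ``bad'' primes lies outside $\mathcal{U}$), so its torsion subgroup is isomorphic to $\mathbb{Q}/\mathbb{Z}$; by the technique of the previous paragraph it therefore expands to a model of the axioms of Theorem~\ref{thm-c} and is elementarily equivalent to $\langle\mathbb{C};\times\rangle$, whereas no $\mathcal{N}_q$ is — so by {\L}o\'{s}'s theorem $T$ cannot equal $\mathrm{Cn}(\sigma)$ for a single $\sigma$. (One could instead invoke Szmielew's classification of abelian groups up to elementary equivalence, but bootstrapping from Theorem~\ref{thm-c} is cleaner here.)

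The step requiring the most care is the interface with Theorem~\ref{thm-c}: on one side, checking that $\circ^{-1}$ and the constants $\boldsymbol\omega_n$ can be interpreted on $\mathcal{N}_q$ so that all of $\texttt{M}_1$--$\texttt{M}_5$ and $\texttt{M}_{6,n},\texttt{M}_{7,n},\texttt{M}_{8,n}$ (for $n\leqslant N$) become true — this is precisely where coprimality of $n$ and $q$ is used, via $n<q$ — and, on the other side, justifying $T=\bigcup_N T_N$ from completeness plus compactness. The remaining points are routine: $\mathbf{0}$ and $\mathbf{1}$ are $\{\times\}$-definable, so reducts (and ultraproducts) behave as stated, and $\mathbb{C}^{\times}\oplus\mathbb{Z}/q\mathbb{Z}$ has $q^2$ solutions of $x^q=\mathbf{1}$ against only $q$ in $\mathbb{C}$.
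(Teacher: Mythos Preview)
Your argument is correct, and it proceeds by a genuinely different construction than the paper's. The paper builds, for $M=N!$, an explicit multiplicative \emph{substructure} $\mathbb{C}/M\subseteq\mathbb{C}$ (generated by all roots of unity together with the primes raised to exponents in $\mathbb{Q}/M$); this substructure satisfies \emph{all} instances of $\texttt{M}_{6,n}$ and $\texttt{M}_{7,n}$ but fails $\texttt{M}_{8,\mathfrak{p}}$ (existence of $\mathfrak{p}$-th roots) for a prime $\mathfrak{p}>M$. You instead go \emph{outward}, adjoining a cyclic summand to get $\{\mathbf{0}\}\cup(\mathbb{C}^\times\oplus\mathbb{Z}/q\mathbb{Z})$; your model keeps full divisibility for $n<q$ but breaks the count of $q$-th roots of unity, i.e.\ it violates (what amounts to) $\texttt{M}_{7,q}$ rather than $\texttt{M}_{8,q}$. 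The paper's model has the pleasant feature of satisfying the entire root-of-unity schema, so one only needs compactness on the $\texttt{M}_{8,n}$ family; your approach is more structural (pure abelian-group reasoning, no explicit real exponents) and makes the ultraproduct repackaging immediate. Both routes hinge on the same interface with Theorem~\ref{thm-c} that you flagged: any finite $\{\times\}$-axiomatization would, by compactness, be derivable from some $\mathcal{A}_{\leqslant N}$, and one then exhibits a model of $\mathcal{A}_{\leqslant N}$ that is not elementarily equivalent to $\langle\mathbb{C};\times\rangle$.
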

\begin{proof}
If the theory is finitely axiomatizable by, say, $B_1,\cdots,B_k$ then the formula  $B_1\wedge\cdots\wedge B_k$ is provable from  $\{\texttt{M}_1,\texttt{M}_2,\texttt{M}_3,\texttt{M}_4,
\texttt{M}_5\}\cup\{\texttt{M}_{6,n},\texttt{M}_{7,n},
\texttt{M}_{8,n}\mid n>1\}$ hence  just a finite number of the instances of $\texttt{M}_{8,n}$ are used in the proof.
Let $N$ be an arbitrarily large natural number, and put $M=N!=2\times\cdots\times N$. Let
$$\mathbb{C}/M=\{\prod_{i<\ell}\mathfrak{p}_i^{r_i}
\prod_{j<k}(\boldsymbol\omega_j)^{n_j}\mid \ell,k,n_j\in\mathbb{N},r_i\in\mathbb{Q}/M\}.$$
The set $\mathbb{C}/M$ is a multiplicative   subset of $\mathbb{C}$ (is  closed under multiplication and inverses) and so satisfies 
$\texttt{M}_1,\texttt{M}_2,\texttt{M}_3,\texttt{M}_4,
\texttt{M}_5,\texttt{M}_{6,n}$ and $\texttt{M}_{7,n}$ (for all $n\!\geqslant\!2$). Since the set $\mathbb{Q}/M$ is closed under the operations $x\mapsto x/n$ for every $n\in\{1,2,3,\cdots,N\}$ then $\mathbb{C}/M$ satisfies 
$\texttt{M}_{8,n}\!:\!\forall x\exists y (y^n=x)$ for $n=2,3,\cdots,N$. But for a large prime number $\mathfrak{p}>M$ the structure $\langle\mathbb{C}/M;\times\rangle$ does not satisfy $\texttt{M}_{8,\mathfrak{p}}\!:\!\forall x\exists y (y^\mathfrak{p}=x)$ since by $1/\mathfrak{p}\not\in\mathbb{Q}/M$ we have $2^{1/\mathfrak{p}}\not\in\mathbb{C}/M$. So, the instances of  $\texttt{M}_{8,n}$ for $n=2,\cdots,N$ (together with the axioms $\texttt{M}_1,\texttt{M}_2,\texttt{M}_3,\texttt{M}_4,
\texttt{M}_5,\texttt{M}_{6,n}$ and $\texttt{M}_{7,n}$ for all $n\!>\!1$) does  not imply the instance of $\texttt{M}_{8,n}$ for $n=\mathfrak{p}$, where $\mathfrak{p}$ is a prime number greater than $N!$.
\end{proof}

\subsection{The Additive Theory of the Complex (and Real and Rational) Numbers}
It is interesting to have a look at the additive theory of the complex numbers (i.e., $\langle\mathbb{C};+\rangle$): its theory is the same as of the additive theory of the real and the rational numbers ($\langle\mathbb{R};+\rangle$ and $\langle\mathbb{Q};+\rangle$) and also the multiplicative theory of the positive real numbers ($\langle\mathbb{R}^+;\times\rangle$); cf. \cite[Theorem~3.1.9]{marker}.

\begin{proposition}[Infinite Axiomatizablity of $\langle\mathbb{C};+\rangle$ and $\langle\mathbb{R};+\rangle$ and $\langle\mathbb{Q};+\rangle$]\label{thm-a}
The following theory
completely axiomatizes the additive theory of the complex (and real and rational) numbers and, moreover, the structure $\langle\mathbb{C};+,-,{\bf 0}\rangle$ (and $\langle\mathbb{R};+,-,{\bf 0}\rangle$ and $\langle\mathbb{Q};+,-,{\bf 0}\rangle$) admits quantifier elimination, and so has a decidable theory.
\begin{center}
\begin{tabular}{ll}
($\texttt{A}_1$) \; $\forall x,y,z\,\big(x+(y+z)=(x+y)+z\big)$  & ($\texttt{A}_2$) \; $\forall x\,\big(x+\mathbf{0}=x\big)$ \\
($\texttt{A}_3$) \; $\forall x\,\big(
 x+ (-x)=\mathbf{0}\big)$ & ($\texttt{A}_4$) \; $\forall x,y\,\big(x+y=y+x\big)$ \\
 ($\texttt{A}_{5,n}$) \; $\forall
x\,\big(n\centerdot x={\bf 0}\longrightarrow x={\bf 0}\big)$ & ($\texttt{A}_6$) \; $\exists y \, \big(y\neq {\bf 0}\big)$ \\
($\texttt{A}_{7,n}$) \; $\forall x\exists y\,\big(x=n\centerdot y\big)$ & Where $n\geqslant 1$ is a natural number. \\
\end{tabular}
\end{center}
\end{proposition}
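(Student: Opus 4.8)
The plan is to mirror the proof of Theorem~\ref{thm-c}, replacing the multiplicative normal forms by additive ones and replacing the roots-of-unity machinery by the much simpler fact that in a torsion-free divisible abelian group every equation $n\centerdot x = t$ has \emph{exactly one} solution (existence from divisibility, uniqueness from torsion-freeness). First note that $\langle\mathbb{C};+,-,{\bf 0}\rangle$ is a definitional expansion of $\langle\mathbb{C};+\rangle$ (and likewise for $\mathbb{R},\mathbb{Q}$), so quantifier elimination in the expanded language yields decidability of the reduct; also, $\langle\mathbb{C};+\rangle$, $\langle\mathbb{R};+\rangle$ and $\langle\mathbb{Q};+\rangle$ all satisfy $\texttt{A}_1$--$\texttt{A}_4$ (abelian group), $\texttt{A}_{5,n}$ (torsion-freeness), $\texttt{A}_6$ (nontriviality) and $\texttt{A}_{7,n}$ (divisibility), so once the theory is shown complete it must coincide with their common additive theory.

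For quantifier elimination I would invoke the Main Lemma (Lemma~\ref{mainlem}) and put terms in normal form: using $\texttt{A}_1$--$\texttt{A}_4$, every term equals $k\centerdot x + t$ for some $k\in\mathbb{Z}$ and some $x$-free term $t$, so every literal involving $x$ is equivalent either to an $x$-free literal (when the $x$-coefficients cancel) or, after rearranging, to one of the form $n\centerdot x = s$ or $n\centerdot x \neq s$ with $n$ a positive integer and $s$ an $x$-free term. Thus it suffices to eliminate the quantifier from
\[
\exists x\Big(\bigwedge\hspace{-2.5ex}\bigwedge_{i<\ell} n_i\centerdot x = t_i \;\wedge\; \bigwedge\hspace{-2.7ex}\bigwedge_{j<k} m_j\centerdot x \neq s_j\Big),
\]
with the $n_i,m_j$ positive and the $t_i,s_j$ being $x$-free. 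When $\ell=0$ this formula is always true (equivalent to ${\bf 0}={\bf 0}$): by $\texttt{A}_6$ and the $\texttt{A}_{5,n}$ the domain is infinite, since a witness $y\neq{\bf 0}$ has $y,2\centerdot y,3\centerdot y,\dots$ pairwise distinct by torsion-freeness, whereas for each $j$ there is at most one $x$ with $m_j\centerdot x = s_j$ (again torsion-freeness), so only finitely many values of $x$ are forbidden. This is the step I expect to require the most care, just as the $\ell=0$ case did in Theorem~\ref{thm-c}; everything else is in fact smoother than the complex case because there are no roots of unity to track.

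When $\ell>0$, torsion-freeness turns the system $\bigwedge_{i<\ell} n_i\centerdot x = t_i$ into a single equation: with $n$ a common multiple of the $n_i$, each $n_i\centerdot x = t_i$ is equivalent (in any torsion-free group) to $n\centerdot x = (n/n_i)\centerdot t_i$, because $(n/n_i)\centerdot\bigl(n_i\centerdot x - t_i\bigr)={\bf 0}$ forces $n_i\centerdot x = t_i$. As in the passage from \eqref{c-1} to \eqref{c-2}, the formula becomes the conjunction of an $x$-free formula (the compatibility conditions $(n/n_0)\centerdot t_0 = (n/n_i)\centerdot t_i$) with $\exists x\bigl(n\centerdot x = t \wedge \bigwedge_{j<k} m_j\centerdot x \neq s_j\bigr)$ where $t=(n/n_0)\centerdot t_0$. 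Each inequality can then be made $x$-free using the equation: from $n\centerdot x = t$ one gets $n\centerdot(m_j\centerdot x) = m_j\centerdot t$, so $m_j\centerdot x = s_j$ implies $n\centerdot s_j = m_j\centerdot t$, and conversely if $n\centerdot s_j = m_j\centerdot t$ then $m_j\centerdot x$ and $s_j$ are two solutions of $n\centerdot z = m_j\centerdot t$, hence equal by torsion-freeness; so, under $n\centerdot x = t$, the literal $m_j\centerdot x\neq s_j$ is equivalent to the $x$-free $n\centerdot s_j\neq m_j\centerdot t$. What is left is $\exists x(n\centerdot x = t)$, which is exactly $\texttt{A}_{7,n}$, hence equivalent to ${\bf 0}={\bf 0}$. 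This proves quantifier elimination.

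Finally, for completeness: in the language $\{+,-,{\bf 0}\}$ every variable-free term is provably equal to ${\bf 0}$ (from $\texttt{A}_2,\texttt{A}_3,\texttt{A}_4$), so every variable-free atomic sentence is provably equivalent to ${\bf 0}={\bf 0}$; by quantifier elimination every sentence is provably equivalent to a Boolean combination of such atoms, hence is provable or refutable. A complete theory with a recursive axiom set is decidable, and since $\langle\mathbb{C};+\rangle$, $\langle\mathbb{R};+\rangle$, $\langle\mathbb{Q};+\rangle$ all satisfy $\texttt{A}_1$--$\texttt{A}_7$, the theory is precisely their common additive theory.
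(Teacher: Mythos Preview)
Your proof is correct and follows the same overall strategy as the paper (Main Lemma reduction, additive normal form, and the infinitude argument for the $\ell=0$ case). The one organizational difference is in the $\ell>0$ step: the paper unifies \emph{all} coefficients at once---multiplying each (in)equation so that every $n_i$ and every $m_j$ equals a common $q$---and then uses $\texttt{A}_{7,q}$ to substitute $y=q\centerdot x$, instantly turning the formula into $\exists y\big(\bigwedge_i y=t_i\wedge\bigwedge_j y\neq s_j\big)$, which is trivially handled. You instead unify only the equation coefficients to some $n$, and then dispose of each inequality $m_j\centerdot x\neq s_j$ by the torsion-freeness uniqueness argument (under $n\centerdot x=t$, the literal is equivalent to $n\centerdot s_j\neq m_j\centerdot t$). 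Both routes are short and sound; the paper's substitution trick leans on divisibility earlier and avoids the per-inequality reasoning, while your route makes more visible that torsion-freeness alone already resolves the inequalities once a single equation is present. Your explicit completeness paragraph (closed terms all reduce to ${\bf 0}$) is a nice addition that the paper leaves implicit.
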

\begin{proof}
By $\texttt{A}_1,\texttt{A}_2,\texttt{A}_3$ and $\texttt{A}_4$ every term involving $x$ is equal to  $k\centerdot x + t$ for some $x$-free term $t$  and  $k\in\mathbb{Z}-\{{\bf 0}\}$. Therefore, every atomic formula involving $x$ is  equivalent with $k\centerdot x =t$ for some positive integer $k$ and some $x$-free term $t$. Thus, by the Main Lemma (\ref{mainlem}) it suffices to eliminate the quantifier of
\begin{equation}\label{a-1}
\exists x ( \bigwedge\hspace{-2.5ex}\bigwedge_{i<\ell} n_i\centerdot x=t_i \;\wedge\;  \bigwedge\hspace{-2.7ex}\bigwedge_{j<k} m_j\centerdot x\neq s_j).
\end{equation}
By $\texttt{A}_{5,k}$ (and $\texttt{A}_3$) we have $a=b \longleftrightarrow k\centerdot a = k\centerdot b$, and so we can assume that all $n_i$'s and all $m_j$'s in the formula \eqref{a-1} are equal to each other. Thus we show the equivalence of
\begin{equation}\label{a-2}
\exists x ( \bigwedge\hspace{-2.5ex}\bigwedge_{i<\ell} q\centerdot x=t_i \;\wedge\;  \bigwedge\hspace{-2.7ex}\bigwedge_{j<k} q\centerdot x\neq s_j)
\end{equation}
with a quantifier-free formula. By $\texttt{A}_{7,q}$, \eqref{a-2} is equivalent with the following formula (for $y=q\centerdot x$):
\begin{equation}\label{a-3}
\exists y (\bigwedge\hspace{-2.5ex}\bigwedge_{i<\ell} y=t_i \;\wedge\;  \bigwedge\hspace{-2.7ex}\bigwedge_{j<k} y\neq s_j).
\end{equation}
Now, if $\ell>0$ then \eqref{a-3} is equivalent with the quantifier-free formula $\bigwedge\hspace{-1.85ex}\bigwedge_{i<\ell} t_0=t_i \wedge  \bigwedge\hspace{-1.85ex}\bigwedge_{j<k} t_0\neq s_j$ and if $\ell=0$ then \eqref{a-3} i.e., the formula $\exists y (\bigwedge\hspace{-1.85ex}\bigwedge_{j<k} y\neq s_j)$ follows from $\texttt{A}_6$ (which together with $\texttt{A}_{5,n}$'s  implies that there are infinitely many elements: for $y\neq {\bf 0}$ we have  $k\centerdot y\neq m\centerdot y$ whenever $k\neq m$), and so is equivalent with the quantifier-free formula ${\bf 0}={\bf 0}$.
\end{proof}

Just a little note that this axiomatization of the additive theory of the complex, real and rational numbers cannot be finite:

\begin{proposition}[No Finite Axiomatization for  $\langle\mathbb{C};+\rangle$ and $\langle\mathbb{R};+\rangle$ and $\langle\mathbb{Q};+\rangle$]\label{thm-infa}
The theories of the structures $\langle\mathbb{C};+\rangle$,  $\langle\mathbb{R};+\rangle$ and $\langle\mathbb{Q};+\rangle$ are not finitely axiomatizable.
\end{proposition}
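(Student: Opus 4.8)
The plan is to mimic the proof of Theorem~\ref{thm-infc}, replacing the multiplicative divisibility axioms $\texttt{M}_{8,n}$ by their additive counterparts $\texttt{A}_{7,n}$ and exhibiting, for each finite fragment of the axiom system from Proposition~\ref{thm-a}, a model in which some omitted instance of $\texttt{A}_{7,n}$ fails. First I would observe that if any of the three theories were finitely axiomatizable, then by Proposition~\ref{thm-a} (which gives a complete axiomatization) the conjunction of the finitely many axioms would be provable from $\{\texttt{A}_1,\texttt{A}_2,\texttt{A}_3,\texttt{A}_4,\texttt{A}_6\}\cup\{\texttt{A}_{5,n},\texttt{A}_{7,n}\mid n\geqslant 1\}$, hence from a finite subset of it; in particular only finitely many instances of $\texttt{A}_{7,n}$ would be used. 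So it suffices to find, for each $N$, a torsion-free abelian group satisfying $\texttt{A}_{7,n}$ for all $n\leqslant N$ but not for some larger $n$.

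The natural candidate is the group $\mathbb{Z}/M$ from Definition~\ref{def-a} with $M=N!$. Here I would check: $\langle\mathbb{Z}/M;+,-,{\bf 0}\rangle$ is an abelian group (a subgroup of $\langle\mathbb{Q};+\rangle$), so it satisfies $\texttt{A}_1$--$\texttt{A}_4$; it is nontrivial, so it satisfies $\texttt{A}_6$; it is torsion-free, so it satisfies every $\texttt{A}_{5,n}$. For divisibility: since $\mathbb{Z}/M$ is closed under $x\mapsto x/n$ for every divisor $n$ of $M$, and every $n\in\{1,\dots,N\}$ divides $N!=M$, the structure satisfies $\texttt{A}_{7,n}$ for $n=1,\dots,N$. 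On the other hand, pick a prime $\mathfrak{p}>N$ (equivalently $\mathfrak{p}\nmid M$); then $1\in\mathbb{Z}/M$ has no $\mathfrak{p}$-th part in $\mathbb{Z}/M$, because $1/\mathfrak{p}=a/M$ would force $\mathfrak{p}\mid M$. Hence $\texttt{A}_{7,\mathfrak{p}}$ fails in $\langle\mathbb{Z}/M;+\rangle$, so the instances of $\texttt{A}_{7,n}$ for $n\leqslant N$ (with the group axioms, $\texttt{A}_6$ and all $\texttt{A}_{5,n}$) do not imply $\texttt{A}_{7,\mathfrak{p}}$. Since this works for arbitrarily large $N$, no finite set of consequences of the full axiom system can axiomatize it, and the three theories coincide by Proposition~\ref{thm-a}, so none of $\langle\mathbb{C};+\rangle$, $\langle\mathbb{R};+\rangle$, $\langle\mathbb{Q};+\rangle$ is finitely axiomatizable.

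I do not anticipate a serious obstacle here; the argument is a routine compactness-style non-implication, and the only point requiring a line of care is the elementary number-theoretic verification that $1/\mathfrak{p}\notin\mathbb{Z}/M$ when $\mathfrak{p}$ is a prime exceeding $N$, which follows immediately from unique factorization. One could equally well phrase the whole argument via compactness (adjoin a constant $c$ with $\forall y\,(\mathfrak{p}\centerdot y\neq c)$ and note finite satisfiability), but the explicit model $\mathbb{Z}/M$ is cleaner and parallels the complex case verbatim. A remark worth including is that the same models show the analogous non-finite-axiomatizability for the multiplicative theory of $\langle\mathbb{R}^+;\times\rangle$, since $x\mapsto\log x$ gives an isomorphism $\langle\mathbb{R}^+;\times\rangle\cong\langle\mathbb{R};+\rangle$.

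\begin{proof}
By Proposition~\ref{thm-a}, the theories of $\langle\mathbb{C};+\rangle$, $\langle\mathbb{R};+\rangle$ and $\langle\mathbb{Q};+\rangle$ coincide and are axiomatized by $T=\{\texttt{A}_1,\texttt{A}_2,\texttt{A}_3,\texttt{A}_4,\texttt{A}_6\}\cup\{\texttt{A}_{5,n},\texttt{A}_{7,n}\mid n\geqslant 1\}$. If this (common) theory were finitely axiomatizable by $B_1,\dots,B_k$, then $B_1\wedge\cdots\wedge B_k$ would be provable from $T$, hence from a finite subset of $T$; in particular, only finitely many instances of $\texttt{A}_{7,n}$ would occur in the proof. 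So it suffices to show that for every natural number $N$ there is a structure satisfying all axioms of $T$ except some instance of $\texttt{A}_{7,n}$, while satisfying $\texttt{A}_{7,n}$ for $n=1,\dots,N$.

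Let $N\geqslant 1$ and put $M=N!$. Consider $\mathbb{Z}/M=\{a/M\mid a\in\mathbb{Z}\}$ of Definition~\ref{def-a}, with the operations $+$, $-$ and the constant ${\bf 0}$ inherited from $\mathbb{Q}$. Being a subgroup of $\langle\mathbb{Q};+\rangle$, the structure $\langle\mathbb{Z}/M;+,-,{\bf 0}\rangle$ satisfies $\texttt{A}_1,\texttt{A}_2,\texttt{A}_3,\texttt{A}_4$; it is nontrivial (e.g.\ $1\neq{\bf 0}$), so it satisfies $\texttt{A}_6$; and it is torsion-free, so it satisfies every $\texttt{A}_{5,n}$. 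Moreover $\mathbb{Z}/M$ is closed under $x\mapsto x/d$ for every divisor $d$ of $M$; since each $n\in\{1,\dots,N\}$ divides $N!=M$, the structure satisfies $\texttt{A}_{7,n}$ for $n=1,\dots,N$.

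Now choose a prime $\mathfrak{p}>N$, so that $\mathfrak{p}\nmid M$. Then $\texttt{A}_{7,\mathfrak{p}}$ fails in $\langle\mathbb{Z}/M;+\rangle$: there is no $y\in\mathbb{Z}/M$ with $\mathfrak{p}\centerdot y=1$, since $y=a/M$ with $\mathfrak{p}\cdot a/M=1$ would give $M=\mathfrak{p}\cdot a$, forcing $\mathfrak{p}\mid M$, a contradiction. Hence the group axioms together with $\texttt{A}_6$, all $\texttt{A}_{5,n}$ and the instances of $\texttt{A}_{7,n}$ for $n\leqslant N$ do not imply the instance of $\texttt{A}_{7,n}$ for $n=\mathfrak{p}$. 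As $N$ was arbitrary, no finite subset of $T$ axiomatizes the theory, so the theory is not finitely axiomatizable; by Proposition~\ref{thm-a} this theory is that of each of $\langle\mathbb{C};+\rangle$, $\langle\mathbb{R};+\rangle$ and $\langle\mathbb{Q};+\rangle$.
\end{proof}
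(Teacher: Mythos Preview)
Your overall strategy is exactly the paper's, but there is a genuine gap in the choice of model: $\mathbb{Z}/M$ does \emph{not} work. By Definition~\ref{def-a}, $\mathbb{Z}/M=\{a/M\mid a\in\mathbb{Z}\}=\tfrac{1}{M}\mathbb{Z}$, which is isomorphic (as an additive group) to $\mathbb{Z}$ via $a/M\mapsto a$. In particular it is not $n$-divisible for any $n\geqslant 2$: the element $1/M\in\mathbb{Z}/M$ has no $n$-th part, since $n\centerdot(a/M)=1/M$ would force $a=1/n\notin\mathbb{Z}$. So your claim that ``$\mathbb{Z}/M$ is closed under $x\mapsto x/d$ for every divisor $d$ of $M$'' is false, and hence $\langle\mathbb{Z}/M;+\rangle$ already fails $\texttt{A}_{7,2}$, not just $\texttt{A}_{7,\mathfrak{p}}$. (The remark after Definition~\ref{def-a} about closure under $x\mapsto x/d$ is stated for $\mathbb{Q}/m$, not for $\mathbb{Z}/m$.)

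The fix is immediate and is precisely what the paper does: replace $\mathbb{Z}/M$ by $\mathbb{Q}/M=\{a/M^k\mid a\in\mathbb{Z},\,k\in\mathbb{N}\}$. For any $n\mid M$ and any $a/M^k\in\mathbb{Q}/M$, writing $M=ne$ gives $\dfrac{a}{nM^k}=\dfrac{ae}{M^{k+1}}\in\mathbb{Q}/M$, so $\texttt{A}_{7,n}$ holds for all $n\leqslant N$ (since each such $n$ divides $N!=M$). For a prime $\mathfrak{p}>M$ one has $1/\mathfrak{p}\notin\mathbb{Q}/M$ (any equality $1/\mathfrak{p}=a/M^k$ would give $\mathfrak{p}\mid M^k$, impossible), so $\texttt{A}_{7,\mathfrak{p}}$ fails. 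With this single substitution your argument is correct and matches the paper's proof verbatim.
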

\begin{proof}
It suffices to note that $\texttt{A}_1,\texttt{A}_2,\texttt{A}_3,\texttt{A}_4,
\texttt{A}_{5,n},\texttt{A}_6,$ and a finite number of the instances of $\texttt{A}_{7,n}$ do not imply all the instances of $\texttt{A}_{7,n}$. For an arbitrary large $N$ let $M=N!=2\times\cdots\times N$. Then $\mathbb{Q}/M$ satisfies $\texttt{A}_1,\texttt{A}_2,\texttt{A}_3,\texttt{A}_4,
\texttt{A}_{5,n},\texttt{A}_6,$ and also $\texttt{A}_{7,n}: \forall x\exists y\,\big(x=n\centerdot y\big)$ for $n\in\{2,\cdots,N\}$, but does not satisfy the instance $\forall x\exists y\,\big(x=\mathfrak{p}\centerdot y\big)$ of $\texttt{A}_{7,\mathfrak{p}}$ for a large prime $\mathfrak{p}>M$.
\end{proof}

\section{The Multiplicative Theory of the Real Numbers}
The mapping  $x\mapsto 2^x$ is an isomorphism between the additive structure of the real numbers $\langle\mathbb{R};+\rangle$ and the multiplicative structure  of the positive reals $\langle\mathbb{R}^+;\times\rangle$. Indeed the proof of Proposition~\ref{thm-a} can show the (computable) axiomatizability (and decidability) of the theory of $\langle\mathbb{R}^+;\times\rangle$:
\begin{proposition}[Axiomatizablity of $\langle\mathbb{R}^+;\times\rangle$---Infinitely]\label{thm-r+}
The following theory
completely axiomatizes  the structure $\langle\mathbb{R}^+;\times,\circ^{-1},{\bf 1}\rangle$ and, moreover, its theory admits quantifier elimination, and so is decidable.
\begin{center}
\begin{tabular}{ll}
($\texttt{M}_1$) \; $\forall x,y,z\,\big(x\cdot (y\cdot z)=(x\cdot y)\cdot z\big)$  & ($\texttt{M}_2$) \; $\forall x\,\big(x\cdot \mathbf{1}=x\big)$ \\
($\texttt{M}_3^\circ$) \; $\forall x\,\big(x \cdot  x^{-1}=\mathbf{1}\big)$ & ($\texttt{M}_4$) \; $\forall x,y\,\big(x\cdot y=y\cdot x\big)$ \\
 ($\texttt{M}_{7,n}^\circ$) \; $\forall
x\,\big(x^n={\bf 1}\longrightarrow x={\bf 1}\big)$ &
($\texttt{M}_{8,n}$) \; $\forall x\exists y\,\big(x=y^n\big)$
 \\
($\texttt{M}_9$) \; $\exists y \, \big(y\neq {\bf 1}\big)$ &
Where $n\geqslant 1$ is a natural number.
\end{tabular}
\end{center}
However, this theory is not finitely axiomatizable.
\end{proposition}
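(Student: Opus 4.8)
The plan is to reuse, essentially verbatim, the machinery already developed for the additive case. Since $x\mapsto 2^x$ is an isomorphism between $\langle\mathbb{R};+\rangle$ and $\langle\mathbb{R}^+;\times\rangle$ (and more precisely between the expanded structures $\langle\mathbb{R};+,-,{\bf 0}\rangle$ and $\langle\mathbb{R}^+;\times,\circ^{-1},{\bf 1}\rangle$, sending $0$ to $1$ and negation to inversion), the completeness and quantifier-elimination claims transfer directly from Proposition~\ref{thm-a}. Concretely, I would first check that the axioms $\texttt{M}_1,\texttt{M}_2,\texttt{M}_3^\circ,\texttt{M}_4,\texttt{M}_{7,n}^\circ,\texttt{M}_{8,n},\texttt{M}_9$ are precisely the images under this isomorphism of $\texttt{A}_1,\texttt{A}_2,\texttt{A}_3,\texttt{A}_4,\texttt{A}_{5,n},\texttt{A}_{7,n},\texttt{A}_6$ respectively: multiplicative associativity $\leftrightarrow$ additive associativity; $x\cdot{\bf 1}=x \leftrightarrow x+{\bf 0}=x$; $x\cdot x^{-1}={\bf 1} \leftrightarrow x+(-x)={\bf 0}$; commutativity $\leftrightarrow$ commutativity; $x^n={\bf 1}\to x={\bf 1} \leftrightarrow n\centerdot x={\bf 0}\to x={\bf 0}$; $\forall x\exists y(x=y^n) \leftrightarrow \forall x\exists y(x=n\centerdot y)$; $\exists y(y\neq{\bf 1}) \leftrightarrow \exists y(y\neq{\bf 0})$. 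Hence the present theory is the push-forward of the theory in Proposition~\ref{thm-a}, and it therefore completely axiomatizes $\langle\mathbb{R}^+;\times\rangle$ and admits quantifier elimination (equivalently, one can simply re-run the proof of Proposition~\ref{thm-a} with $+,-,{\bf 0},n\centerdot(-)$ replaced throughout by $\times,\circ^{-1},{\bf 1},(-)^n$).

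For the non-finite-axiomatizability clause, I would transport the argument of Proposition~\ref{thm-infa} along the same isomorphism. Suppose for contradiction that $\mathrm{Th}\langle\mathbb{R}^+;\times\rangle$ is finitely axiomatizable; then a single sentence $B$ (the conjunction of the finite axiom set) is provable from $\{\texttt{M}_1,\texttt{M}_2,\texttt{M}_3^\circ,\texttt{M}_4,\texttt{M}_9\}\cup\{\texttt{M}_{7,n}^\circ,\texttt{M}_{8,n}\mid n\geqslant 1\}$ using only finitely many instances of $\texttt{M}_{8,n}$, say for $n$ below some bound. Choosing $N$ larger than that bound and $M=N!$, I would produce a model $G_M$ of all the non-$\texttt{M}_{8,n}$ axioms plus $\texttt{M}_{8,2},\dots,\texttt{M}_{8,N}$ that fails $\texttt{M}_{8,\mathfrak{p}}$ for a prime $\mathfrak{p}>M$; the image under $x\mapsto 2^x$ of the additive group $\mathbb{Q}/M$ is the natural candidate, namely the multiplicative group $\{2^{a/M^k}\mid a\in\mathbb{Z},k\in\mathbb{N}\}$ (equivalently $\{q^{1/M^k}\mid q\in\mathbb{Q}^+,k\in\mathbb{N}\}$ suitably interpreted), which is divisible by every $n\leqslant N$ but not by $\mathfrak{p}$, since $2^{1/\mathfrak{p}}$ is not of that form because $1/\mathfrak{p}\notin\mathbb{Q}/M$. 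One must also verify $\texttt{M}_{7,n}^\circ$ holds in this group — that is, it is torsion-free — which is immediate since it is a subgroup of $\langle\mathbb{R}^+;\times\rangle$, and $\texttt{M}_9$ holds as the group is nontrivial. Since $G_M$ satisfies all axioms used in the alleged proof of $B$ but, for suitable $\mathfrak{p}$, the sentence $\texttt{M}_{8,\mathfrak{p}}$ (which is a theorem of the full theory) fails in $G_M$ whenever $G_M\models B$, we reach a contradiction.

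I do not expect any serious obstacle here: the only thing requiring a moment's care is making sure the isomorphism $x\mapsto 2^x$ really does carry each $\texttt{A}$-axiom to the stated $\texttt{M}$-axiom with the matching index conventions (in particular that $n\centerdot x$ corresponds to $x^n$ and the quantifier-free reductions of atomic formulas go through identically), so that no separate quantifier-elimination argument is needed. An alternative, fully self-contained route — rewriting the proof of Proposition~\ref{thm-a} line by line in multiplicative notation — is equally routine but longer; I would mention it only as a remark. The non-finite-axiomatizability half is a direct analogue of Proposition~\ref{thm-infa} and presents nothing new beyond checking torsion-freeness and divisibility of the witnessing subgroup.
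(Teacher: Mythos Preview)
Your proposal is correct and follows essentially the same approach as the paper: the completeness and quantifier-elimination claims are obtained by transporting Proposition~\ref{thm-a} along the isomorphism $x\mapsto 2^x$, and non-finite-axiomatizability is shown by exhibiting, for each $N$ with $M=N!$, a torsion-free subgroup of $\langle\mathbb{R}^+;\times\rangle$ that is $n$-divisible for all $n\leqslant N$ but not $\mathfrak{p}$-divisible for primes $\mathfrak{p}>M$. The only cosmetic difference is the witness group: the paper uses $\mathbb{R}^+/M=\{\prod_{i<\ell}\mathfrak{p}_i^{r_i}\mid r_i\in\mathbb{Q}/M\}$ (all primes with exponents in $\mathbb{Q}/M$), whereas your primary candidate $\{2^{r}\mid r\in\mathbb{Q}/M\}$ uses a single prime --- both work, but note that your parenthetical ``equivalently $\{q^{1/M^k}\mid q\in\mathbb{Q}^+\}$'' is not literally the same group as your first one (it is in fact the paper's model), so you should drop the word ``equivalently'' there.
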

\begin{proof}
For $M=N!$ the multiplicative subset of positive real numbers  $$\mathbb{R}^+/M=\{\prod_{i<\ell}\mathfrak{p}_i^{r_i}\mid \ell\in\mathbb{N},r_i\in\mathbb{Q}/M\}$$
satisfies $\texttt{M}_1,\texttt{M}_2,\texttt{M}_3^\circ,\texttt{M}_4,
\texttt{M}_{7,n}^\circ,\texttt{M}_9$ (for all $n\!\geqslant\!1$) and the instances of $\texttt{M}_{8,n}$ for $n=1,2,\cdots,N$ but does not satisfy the instance of $\texttt{M}_{8,n}$ when $n$ is a prime number greater than $M$.
\end{proof}
Let us note that $\langle\mathbb{R}^+;\times,\circ^{-1},{\bf 1}\rangle$ is an abelian group, and the theory of all abelian groups is decidable (proved by Szmielew for the first time in~\cite{szmielew55}).

Adding a zero to the elements with the axiom $\forall x\,\big(x\cdot \mathbf{0}=\mathbf{0}=\mathbf{0}^{-1}\big)$ can completely axiomatize the multiplicative theory of the non-negative real numbers $\langle\mathbb{R}^{\geqslant 0};\times\rangle$.
 Since the proof of the following theorem will be essentially repeated in Theorem~\ref{thm-r}, we do not present it.
\begin{proposition}[Infinite Axiomatizablity of $\langle\mathbb{R}^{\geqslant 0};\times\rangle$]\label{thm-r0}
The following theory
completely axiomatizes  the structure $\langle\mathbb{R}^{\geqslant 0};\times,\circ^{-1},{\bf 0},{\bf 1}\rangle$ and, moreover, its theory admits quantifier elimination, and so is decidable.
\begin{center}
\begin{tabular}{ll}
($\texttt{M}_1$) \; $\forall x,y,z\,\big(x\cdot (y\cdot z)=(x\cdot y)\cdot z\big)$  & ($\texttt{M}_2$) \; $\forall x\,\big(x\cdot \mathbf{1}=x\big)$ \\
($\texttt{M}_3$) \; $\forall x\,\big(x\neq{\bf 0}\longrightarrow x \cdot  x^{-1}=\mathbf{1}\big)$ & ($\texttt{M}_4$) \; $\forall x,y\,\big(x\cdot y=y\cdot x\big)$ \\
 ($\texttt{M}_{7,n}^\circ$) \; $\forall
x\,\big(x^n={\bf 1}\longrightarrow x={\bf 1}\big)$ &
($\texttt{M}_{8,n}$) \; $\forall x\exists y\,\big(x=y^n\big)$
 \\
($\texttt{M}_9^\circ$) \; $\exists y \, \big(y\neq {\bf 0}, {\bf 1}\big)$ & ($\texttt{M}_{10}$) \; $\forall x\,\big(x\cdot \mathbf{0}=\mathbf{0}=\mathbf{0}^{-1}\big)$ \\
Where $n\geqslant 1$ is a natural number. &
\end{tabular}

\end{center}
This theory is not finitely axiomatizable.
\QED
\end{proposition}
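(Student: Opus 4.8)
The plan is to prove the three assertions — quantifier elimination, completeness (together with the identification of the axiomatized theory), and non-finite axiomatizability — in that order, borrowing heavily from the arguments already given. For quantifier elimination I would apply the Main Lemma~\ref{mainlem} and follow the proof of Theorem~\ref{thm-c} almost verbatim, with $\texttt{M}_{10}$ playing the role of $\texttt{M}_5$. Using $\texttt{M}_1$--$\texttt{M}_4$ and $\texttt{M}_{10}$, every term in $x$ is provably equal to $x^{k}\cdot t$ for an $x$-free term $t$ and some $k\in\mathbb{Z}$, so (exactly as in Theorem~\ref{thm-c}) every literal in $x$ is a Boolean combination of $x={\bf 0}$, $x\neq{\bf 0}$, $x$-free conditions, and formulas $x^{n}=t$ and $x^{n}\neq t$ with $n\geqslant 1$ and $t$ an $x$-free term. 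Hence it suffices to eliminate the quantifier from $\exists x\,(\bigwedge_{i}x^{n_i}=t_i\wedge\bigwedge_{j}x^{m_j}\neq s_j)$.

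Now split on whether $x={\bf 0}$. On the branch $x={\bf 0}$ one substitutes ${\bf 0}$ and uses ${\bf 0}^{n}={\bf 0}$ (from $\texttt{M}_{10}$), yielding the quantifier-free $\bigwedge_i {\bf 0}=t_i\wedge\bigwedge_j {\bf 0}\neq s_j$. On the branch $x\neq{\bf 0}$, each equation forces $t_i\neq{\bf 0}$ and each inequation with $s_j={\bf 0}$ is automatic, so after a further case split we may assume all $t_i,s_j\neq{\bf 0}$ and the formula lives in the multiplicative group of nonzero elements. On that group $\texttt{M}_3$ gives inverses, $\texttt{M}_{7,n}^{\circ}$ is torsion-freeness, $\texttt{M}_{8,n}$ restricts (an $n$-th root of a nonzero element is nonzero, by $\texttt{M}_{10}$), and $\texttt{M}_9^{\circ}$ supplies a nonidentity element — precisely the data used in Proposition~\ref{thm-r+}. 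So one equalizes all exponents to a common $q$ (using $a^{q}=b^{q}\to a=b$ for nonzero $a,b$, which follows from $\texttt{M}_{7,n}^{\circ}$), then replaces $x^{q}$ by a fresh variable via surjectivity of $z\mapsto z^{q}$ on the nonzero elements (from $\texttt{M}_{8,q}$), reducing to a formula of the shape $\exists u\,(u\neq{\bf 0}\wedge\bigwedge u=t\wedge\bigwedge u\neq s)$; this is quantifier-free when some equation is present, and is equivalent to ${\bf 0}={\bf 0}$ otherwise, since $\texttt{M}_9^{\circ}$ and $\texttt{M}_{7,n}^{\circ}$ force infinitely many nonzero elements. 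This completes the quantifier elimination, and since the equivalences are effective, decidability follows.

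For completeness, note that every closed term is provably equal to ${\bf 0}$ or to ${\bf 1}$ (using $\texttt{M}_{10}$ and $\texttt{M}_2$--$\texttt{M}_4$), and the theory proves ${\bf 1}\neq{\bf 0}$ (otherwise $x=x\cdot{\bf 1}=x\cdot{\bf 0}={\bf 0}$ for all $x$, against $\texttt{M}_9^{\circ}$); hence every closed atomic sentence, and so by quantifier elimination every sentence, is decided by the theory, which is therefore complete. As $\langle\mathbb{R}^{\geqslant 0};\times,\circ^{-1},{\bf 0},{\bf 1}\rangle$ visibly satisfies all the listed axioms, the theory is exactly its first-order theory. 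Non-finite axiomatizability then follows by the device of Theorem~\ref{thm-infc}: if the theory were finitely axiomatizable it would be a consequence of $\texttt{M}_1$--$\texttt{M}_4,\texttt{M}_{7,n}^{\circ},\texttt{M}_9^{\circ},\texttt{M}_{10}$ together with only finitely many instances of $\texttt{M}_{8,n}$, say those with $n\leqslant N$. Putting $M=N!$ and $\mathbb{R}^{\geqslant 0}/M=\{{\bf 0}\}\cup\{\prod_{i<\ell}\mathfrak{p}_i^{\,r_i}\mid \ell\in\mathbb{N},\ r_i\in\mathbb{Q}/M\}$ (with $\mathbb{Q}/M$ as in Definition~\ref{def-a}), this is a multiplicative subset of $\mathbb{R}^{\geqslant 0}$ closed under inverses, hence a model of $\texttt{M}_1$--$\texttt{M}_4,\texttt{M}_{7,n}^{\circ},\texttt{M}_9^{\circ},\texttt{M}_{10}$, and it models $\texttt{M}_{8,n}$ for $n\leqslant N$ because $\mathbb{Q}/M$ is closed under $x\mapsto x/n$ when $n\mid M$; but it fails $\texttt{M}_{8,\mathfrak{p}}$ for any prime $\mathfrak{p}>M$, since the only nonnegative $\mathfrak{p}$-th root of $2=\mathfrak{p}_0$ is $2^{1/\mathfrak{p}}\notin\mathbb{R}^{\geqslant 0}/M$ (as $1/\mathfrak{p}\notin\mathbb{Q}/M$). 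Since $\texttt{M}_{8,\mathfrak{p}}$ belongs to the (complete) theory, this is the required contradiction.

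I expect the only place needing care to be the quantifier-elimination step: one must carry out the case analysis on $x={\bf 0}$ cleanly and verify that, on the nonzero branch, all the hypotheses invoked in Proposition~\ref{thm-r+} are genuinely available for the group of nonzero elements — in particular that $\texttt{M}_{8,n}$ descends to divisibility there and that $x^{q}\neq{\bf 0}$ is provable for $x\neq{\bf 0}$ (using $\texttt{M}_3$, $\texttt{M}_{10}$, and ${\bf 1}\neq{\bf 0}$) — while the closed-term computation and the check that $\mathbb{R}^{\geqslant 0}/M$ has the stated properties are routine.
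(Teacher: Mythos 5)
Your proposal is correct and matches the approach the paper intends: the paper omits this proof precisely because it is the combination of the reduction to the nonzero multiplicative group handled in Proposition~\ref{thm-r+} with the zero-element case analysis carried out in Theorem~\ref{thm-r}, which is exactly what you do, and your non-finite-axiomatizability argument via $\{{\bf 0}\}\cup\mathbb{R}^+/M$ is the same counterexample used in Proposition~\ref{thm-r+} and Theorem~\ref{thm-infr}. The details you flag as needing care (deriving ${\bf 1}\neq{\bf 0}$ from $\texttt{M}_9^{\circ}$ and $\texttt{M}_{10}$, hence $x\neq{\bf 0}\rightarrow x^{q}\neq{\bf 0}$, and the descent of $\texttt{M}_{8,n}$ to the nonzero elements) all go through as you indicate.
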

The whole set of the real numbers with the multiplication operation, i.e., the structure $\langle\mathbb{R};\times,\circ^{-1},{\bf 0},{\bf 1}\rangle$,  does not admit quantifier elimination: the formula $\exists x (y=x\cdot x)$ is not equivalent with any quantifier-free formula (in the language $\{\times,\circ^{-1},{\bf 0},{\bf 1},{\bf -1}\}$). Indeed this formula is equivalent with the quantifier-free formula $y\!\geqslant\!0$, so it is tempting to add order to the language for eliminating the quantifiers. But order is not definable by multiplication in $\mathbb{R}$ since the multiplicative automorphism $x\mapsto 1/x$ (for $x\neq 0$ and $0\mapsto 0$) does not preserve the order relation (neither does it preserve the addition operation). But if we add the {\em positivity}  property to the language, $\mathcal{P}(y)$ meaning that ``$y$ is a positive real number''  then the procedure of quantifier elimination can go through (then for example $\exists x (y\!=\!x\!\cdot\!x)$ is equivalent with the quantifier-free formula $\mathcal{P}(y)\!\vee\!y\!=\!{\bf 0}$). Below, $-x$ is a shorthand for $(-1)\cdot x$.
\begin{theorem}[Infinite Axiomatizablity of $\langle\mathbb{R};\times\rangle$]\label{thm-r}
The following theory
completely axiomatizes  the structure $\langle\mathbb{R};\times,\circ^{-1},{\bf 0},{\bf 1},{\bf -1},\mathcal{P}\rangle$ and, moreover, its theory admits quantifier elimination, and so is decidable.
\begin{center}
\begin{tabular}{ll}
($\texttt{M}_1$) \; $\forall x,y,z\,\big(x\cdot (y\cdot z)=(x\cdot y)\cdot z\big)$  & ($\texttt{M}_2$) \; $\forall x\,\big(x\cdot \mathbf{1}=x\big)$ \\
($\texttt{M}_3$) \; $\forall x\,\big(x\neq{\bf 0}\longrightarrow x \cdot  x^{-1}=\mathbf{1}\big)$ & ($\texttt{M}_4$) \; $\forall x,y\,\big(x\cdot y=y\cdot x\big)$ \\
($\texttt{M}_9^\diamond$) \; $\exists y \, \big(y\neq  {\bf -1}, {\bf 0}, {\bf 1}\big)$ & ($\texttt{M}_{10}$) \; $\forall x\,\big(x\cdot \mathbf{0}=\mathbf{0}=\mathbf{0}^{-1}\big)$ \\
 ($\texttt{M}_{11,n}$) \; $\forall
x\,\big(x^{2n}={\bf 1}\longleftrightarrow x={\bf 1}\vee x={\bf -1}\big)$ & ($\texttt{M}_{12,n}$) \; $\forall x\exists y\,\big(x=y^{2n+1}\big)$  \\
($\texttt{M}_{13}$) \; $\forall x\,\big(\mathcal{P}(x)\longleftrightarrow \exists y [y\neq{\bf 0}\wedge x=y^2]\big)$  & ($\texttt{M}_{14}$) \; $\forall x\,\big(x\neq{\bf 0}\longrightarrow [\neg\mathcal{P}(x)\leftrightarrow\mathcal{P}(-x)]\big)$ \\
($\texttt{M}_{15}$) \; $\forall x,y\neq{\bf 0}\,\big(\mathcal{P}(x\cdot y)\longleftrightarrow [\mathcal{P}(x)\leftrightarrow\mathcal{P}(y)]\big)$ &
Where $n \geqslant 1$ is a natural number.
\end{tabular}
\end{center}
\end{theorem}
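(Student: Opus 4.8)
The plan is to imitate the proof of Theorem~\ref{thm-c}, now carrying the sign predicate $\mathcal{P}$ through the elimination, and then to read off completeness and decidability from quantifier elimination. First I would check the routine fact that the structure $\langle\mathbb{R};\times,\circ^{-1},\mathbf{0},\mathbf{1},\mathbf{-1},\mathcal{P}\rangle$ satisfies all the listed axioms, and observe that the theory $T$ they generate settles every atomic sentence: every closed term is $T$-provably one of $\mathbf{0},\mathbf{1},\mathbf{-1}$ (the three-element set is closed under $\cdot$ and $\circ^{-1}$, using $\mathbf{0}^{-1}=\mathbf{0}$ and $(\mathbf{-1})^{2}=\mathbf{1}$ from $\texttt{M}_{11,1}$), and among these $T$ proves $\mathbf{0}\neq\mathbf{1}$ (else the domain is a singleton, against $\texttt{M}_9^\diamond$), $\mathbf{-1}\neq\mathbf{0}$, $\mathbf{-1}\neq\mathbf{1}$ (else $\texttt{M}_{14}$ at $x=\mathbf{1}$ gives $\neg\mathcal{P}(\mathbf{1})\leftrightarrow\mathcal{P}(\mathbf{1})$), $\mathcal{P}(\mathbf{1})$ (by $\texttt{M}_{13}$), $\neg\mathcal{P}(\mathbf{0})$ (since $\texttt{M}_3$ forces $y^2=\mathbf{0}\to y=\mathbf{0}$, incompatible with $y\neq\mathbf{0}$ in $\texttt{M}_{13}$), and $\neg\mathcal{P}(\mathbf{-1})$ (by $\texttt{M}_{14}$). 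Granting quantifier elimination, every sentence is then $T$-equivalent to a Boolean combination of atomic sentences, hence $T$-decided; this yields simultaneously that $T$ is complete --- so $T$ is exactly the theory of $\langle\mathbb{R};\times,\dots\rangle$, which models $T$ --- and that $T$ is decidable, its axiom set being computable.

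For the elimination itself I would, exactly as in Theorem~\ref{thm-c}, use $\texttt{M}_1$--$\texttt{M}_4$ and $\texttt{M}_{10}$ to bring every term in $x$ to the form $x^{k}\cdot t$ with $t$ an $x$-free term and $k\in\mathbb{Z}$, so that after splitting on $x=\mathbf{0}$ and on $t=\mathbf{0}$ every atomic equality in $x$ becomes $x^{n}=u$ with $u$ $x$-free and $n>0$. The genuinely new ingredient is the treatment of atoms $\mathcal{P}(x^{k}\cdot t)$: with $\texttt{M}_{13},\texttt{M}_{14},\texttt{M}_{15}$ (and $y^2=\mathbf{0}\to y=\mathbf{0}$) one rewrites $\mathcal{P}(x^{k}\cdot t)$ as $x\neq\mathbf{0}\wedge t\neq\mathbf{0}\wedge\mathcal{P}(t)$ when $k$ is even and as $x\neq\mathbf{0}\wedge t\neq\mathbf{0}\wedge(\mathcal{P}(x)\leftrightarrow\mathcal{P}(t))$ when $k$ is odd, so that, after case-splitting on the $x$-free literals, every occurrence of $\mathcal{P}$ involving $x$ reduces to one of the conditions ``$x=\mathbf{0}$'', ``$\mathcal{P}(x)$'', ``$x\neq\mathbf{0}\wedge\neg\mathcal{P}(x)$''. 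By the Main Lemma~\ref{mainlem} it then suffices to eliminate the quantifier from
$$\exists x\Big(\varepsilon\wedge\bigwedge\nolimits_{i<\ell}x^{n_i}=t_i\wedge\bigwedge\nolimits_{j<k}x^{m_j}\neq s_j\Big),$$
where $\varepsilon$ is one of $\mathbf{0}=\mathbf{0}$, $x\neq\mathbf{0}$, $\mathcal{P}(x)$, $x\neq\mathbf{0}\wedge\neg\mathcal{P}(x)$, and the $t_i,s_j$ are $x$-free; the Euclidean-style reduction of Theorem~\ref{thm-c} collapses the $n_i$ to a common $n$ (yielding an $x$-free conjunct $\bigwedge_{i}t_i=t_0$), and the inequalities with $s_j=\mathbf{0}$ get absorbed into $\varepsilon$, leaving $\exists x(\varepsilon\wedge x^{n}=t\wedge\bigwedge_{j<k}x^{m_j}\neq s_j)$ with every $s_j\neq\mathbf{0}$.

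The heart of the argument is then a finite case split on the parity of $n$ and, when $n$ is even, on the sign of $t$, exploiting that the real $n$-th roots of unity are exactly $\{\mathbf{1}\}$ for odd $n$ (because $(\mathbf{-1})^{n}=\mathbf{-1}\neq\mathbf{1}$) and exactly $\{\mathbf{1},\mathbf{-1}\}$ for even $n$ (by $\texttt{M}_{11,n/2}$). I would first record the solvability facts provable from $T$: $\texttt{M}_{12}$ supplies all odd-degree roots; composing this with $\texttt{M}_{13}$, and using $\texttt{M}_{14},\texttt{M}_{15}$ to track signs, every positive element has for each $m$ a positive $m$-th root, every nonzero element has for each odd $m$ a unique $m$-th root (of the same sign), and for even $n$ the equation $x^{n}=t$ is solvable precisely when $t=\mathbf{0}$ or $\mathcal{P}(t)$. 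Hence $x^{n}=t$ has at most two solutions --- splitting by sign when $n$ is even and $t$ is positive, so that $\varepsilon$ selects at most one of them on each branch --- and on such a branch each $x^{m_j}$ is a fixed sign times an $n$-th root of $t^{m_j}$. Therefore, exactly as in Theorem~\ref{thm-c} where inequalities were removed one at a time via $x^{m}\neq s\leftrightarrow x^{mn}\neq s^{n}\vee\bigvee_{0<i<n}x^{m}=s(\boldsymbol\omega_n)^{i}$ --- here the disjunction is empty when $n$ is odd, so $x^{m_j}\neq s_j$ becomes directly the $x$-free $t^{m_j}\neq s_j^{n}$, and it consists of the single extra disjunct $x^{m}=-s$ when $n$ is even (legitimate since $s_j\neq -s_j$ as $\mathbf{1}\neq\mathbf{-1}$) --- each inequality can be traded for an $x$-free condition, possibly after spawning and re-merging a new equality $x^{m_j}=-s_j$. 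One is left with $\exists x(\varepsilon'\wedge x^{n}=t)$ for $\varepsilon'$ among the four shapes above, whose $x$-free equivalent is the corresponding solvability condition just listed; here $\texttt{M}_9^\diamond$ --- which via $\texttt{M}_{11,1},\texttt{M}_{13},\texttt{M}_{14},\texttt{M}_{15}$ yields infinitely many positive and infinitely many negative elements --- handles the base case $\ell=0$, as each $x^{m_j}=s_j$ has at most two solutions.

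I expect the main obstacle to be organizational rather than conceptual. Unlike the complex case, where the single uniform family $\boldsymbol\omega_n$ absorbed all roots of unity at once, here the case tree (parity of $n$) $\times$ (sign of $t$) $\times$ (shape of $\varepsilon$) $\times$ (sign of each $s_j$) is considerably bushier; one must be careful to invoke $\texttt{M}_{11,m}$ with the right index, and to verify that in the ``$n$ even, $t$ positive'' branch the two $n$-th roots really do have opposite signs so that $\mathcal{P}(x)$ or $\neg\mathcal{P}(x)$ pins down a unique one. No individual step is deep; the real content is the systematic bookkeeping that $\mathcal{P}$ can always be transported onto $x$-free subterms via $\texttt{M}_{13}$--$\texttt{M}_{15}$.
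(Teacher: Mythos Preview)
Your proposal is correct and follows a genuinely different route from the paper's. The paper's proof is more modular: after deriving the basic sign facts ($\mathcal{P}(\mathbf{1})$, $\neg\mathcal{P}(\mathbf{0})$, $\neg\mathcal{P}(\mathbf{-1})$) and verifying that the axioms of Proposition~\ref{thm-r+} hold when relativized to $\mathcal{P}$, it pushes every $\mathcal{P}$-atom onto a single variable via $\texttt{M}_{14},\texttt{M}_{15}$ and then case-splits \emph{every} free variable (not just the quantified $x$) on $y=\mathbf{0}\vee\mathcal{P}(y)\vee\mathcal{P}(-y)$, substituting $-y$ for $y$ where needed, so that on each branch all variables are positive and all minus signs disappear; what remains is literally a formula in the language of $\langle\mathbb{R}^+;\times\rangle$, and Proposition~\ref{thm-r+} finishes the elimination. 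You instead mimic Theorem~\ref{thm-c} directly, tracking the sign of the quantified variable through a single literal $\varepsilon$ and doing the parity/sign bookkeeping by hand (odd $n$ gives a unique root, even $n$ gives at most two roots of opposite sign selected by $\varepsilon$, inequalities peel off via $x^m\neq s\leftrightarrow t^m\neq s^n\vee x^m=-s$). Your approach is self-contained and avoids appealing to the separate $\mathbb{R}^+$ result, at the price of the bushier case tree you correctly anticipate; the paper's approach hides that complexity inside the already-proved Proposition~\ref{thm-r+} and the uniform ``make everything positive'' trick, which also dispenses with your explicit parity analysis of $n$.
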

\begin{proof}
Firstly, let us derive ($\texttt{M}_{16}$)  $\neg\mathcal{P}({\bf 0})\wedge \mathcal{P}({\bf 1})$  as follows: from $\texttt{M}_{2}$, $\texttt{M}_{10}$  and $\texttt{M}_{9}^\diamond$ we have ${\bf 0}\neq {\bf 1}$ and so $\texttt{M}_{2}$ and $\texttt{M}_{13}$ imply $\mathcal{P}({\bf 1})$; also  $\texttt{M}_{3}$ (together with ${\bf 0}\neq {\bf 1}$) implies $\forall x (x^k={\bf 0} \longrightarrow x={\bf 0})$  whence $\neg\mathcal{P}({\bf 0})$ holds by $\texttt{M}_{13}$. Then
we note that e.g. the sentence $\forall
x\,\big(x^{2k+1}={\bf 1}\longrightarrow x={\bf 1}\big)$ is derivable from the above axioms, since if $a^{2k+1}=1$ then by $\texttt{M}_{16}$ we have $\mathcal{P}(a^{2k+1})$ or equivalently $\mathcal{P}(a^{2k}\cdot a)$. Now by $\texttt{M}_{13}$ (from which $\mathcal{P}(a^{2k})$ follows) and $\texttt{M}_{15}$ we have $\mathcal{P}(a)$, and so by $\texttt{M}_{13}$,  $a=b^2$ for some  $b$. Now $\texttt{M}_{11,2k+1}$ (since $b^{2\cdot (2k+1)}=1$) implies that either $b=1$ or $b=-1$ holds; in each case we have $a=b^2=1$ (by $\texttt{M}_{11,1}$). Also, the above axioms imply that there are infinitely many elements, since for any $c$ with $c\neq -1,0,1$ (by $\texttt{M}_{9}^\diamond$) we have $c^{k}\neq c^{m}$ whenever $k<m$ (by $\texttt{M}_{11,m-k}$).

\noindent Secondly, the axioms of $\langle\mathbb{R}^+;\times\rangle$ (in Theorem~\ref{thm-r+}) are derivable from the above axioms when they are relativized to $\mathcal{P}$. For example, the relativization of $\texttt{M}_{7,n}^\circ$ which is $\forall x \big(\mathcal{P}(x)\longrightarrow [x^n={\bf 1} \rightarrow x={\bf 1}]\big)$ was actually proved above for odd $n$ (and for even $n$ it follows from $\texttt{M}_{11,n/2}$ noting that  $\texttt{M}_{16}$ and $\texttt{M}_{14}$ imply that $\neg\mathcal{P}({\bf -1})$ holds). We show the relativization of $\texttt{M}_{8,n}$ to $\mathcal{P}$: $\forall x \exists y \big(\mathcal{P}(x)\longrightarrow x=y^n\big)$. Write $n=2^k(2\ell + 1)$; we prove this by induction on $k$. For $k=0$ it follows from $\texttt{M}_{12,\ell}$; for the induction step we note that if $(z)^{2^k(2\ell + 1)}=x$ then we can assume (by $\texttt{M}_{14}$ and $(-z)^{2^k(2\ell + 1)}=(z)^{2^k(2\ell + 1)}$) that $\mathcal{P}(z)$ holds and so the result (the existence of some $y$ with $y^2=z$ whence  $y^{2^{k+1}(2\ell + 1)}=z^{2^k(2\ell + 1)}=x$) follows immediately from $\texttt{M}_{13}$.

\noindent
Finally, the procedure of the quantifier elimination goes as follows. The negations behind $\mathcal{P}$'s can be eliminated by $\texttt{M}_{14}$ which implies (together with $\texttt{M}_{16}$) that $\neg\mathcal{P}(x)\equiv (x={\bf 0}) \vee\mathcal{P}(-x)$. Also by $\texttt{M}_{15}$ we have that $\mathcal{P}(u\cdot v)\equiv[\mathcal{P}(u)\wedge\mathcal{P}(v)]
\vee[\mathcal{P}(-u)\wedge\mathcal{P}(-v)]$. So, we can assume that  $\mathcal{P}(\alpha)$ appears only in the positive form and only when $\alpha$ is either $y$ or $-y$ for a variable $y$. Now, by Lemma~\ref{mainlem}, it suffices to prove the equivalence of the formulas of the form
\begin{equation*}
\exists x ( \mathcal{P}(\diamondsuit x) \wedge  \bigwedge\hspace{-2.5ex}\bigwedge_{i<\ell} x^{n_i}=t_i \;\wedge\;  \bigwedge\hspace{-2.7ex}\bigwedge_{j<k} x^{m_j}\neq s_j )
\end{equation*}
with a quantifier-free formula; where $t_i$'s and $s_j$'s are terms  and $\diamondsuit x$ is either $x$ or $-x$. For each variable $y$ which appears in $t_i$'s or $s_j$'s we have $y={\bf 0}\vee \mathcal{P}(y) \vee \mathcal{P}(-y)$. The case of $y=0$   need not be considered, and by changing $y$ to $-y$ if necessary, we can assume that ``all the variables are positive'', including $x$. Thus, it suffices to eliminate the quantifier of the formula
\begin{equation}\label{r-2}
\exists x ( \mathcal{P}(x) \wedge \bigwedge\hspace{-2.7ex}\bigwedge_{\iota<\alpha} \mathcal{P}(y_\iota) \wedge  \bigwedge\hspace{-2.5ex}\bigwedge_{i<\ell} x^{n_i}=t_i \;\wedge\;  \bigwedge\hspace{-2.7ex}\bigwedge_{j<k} x^{m_j}\neq s_j )
\end{equation}
where all the variables appearing in $t_i$'s and $s_j$'s are among $\{y_\iota\}_{\iota<\alpha}$. Lastly, we can assume that no minus sign ($-$) appears in~(\ref{r-2}) since $-(-u)=u$ and the formulas of the form $v=-w$ can be replaced (are equivalent) with ${\bf 0}\neq {\bf 0}$ (since $v$ and $w$ are positive as all their variables are positive). Now the formula~(\ref{r-2}), when all the variables are positive and no minus sing appears in it, is equivalent with a quantifier-free formula by Proposition~\ref{thm-r+}.
\end{proof}

\begin{theorem}[No Finite Axiomatization for $\langle\mathbb{R};\times\rangle$]\label{thm-infr}
The theory of the structure $\langle\mathbb{R};\times\rangle$ is not finitely axiomatizable.
\end{theorem}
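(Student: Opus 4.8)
The plan is to mimic the argument already used for $\langle\mathbb{C};\times\rangle$ in Theorem~\ref{thm-infc} and for $\langle\mathbb{R}^{+};\times\rangle$ in Proposition~\ref{thm-r+}: if the theory were finitely axiomatizable, then a finite conjunction $B_1\wedge\cdots\wedge B_k$ equivalent to the theory would be provable from the (infinite) axiomatization given in Theorem~\ref{thm-r}, hence from only finitely many of the axioms $\texttt{M}_{12,n}$ (the ones asserting that $2n+1$-th roots exist). So it suffices to produce, for arbitrarily large $N$, a multiplicative substructure of $\mathbb{R}$ that satisfies all the axioms of Theorem~\ref{thm-r} \emph{except} some instance $\texttt{M}_{12,n}$ with $2n+1$ a large prime, while still satisfying $\texttt{M}_{12,\ell}$ for all $\ell\leqslant N$.

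First I would fix $N$, set $M=N!$, and consider the set
\[
\mathbb{R}/M \;=\; \Big\{\,\pm\!\!\prod_{i<\ell}\mathfrak{p}_i^{\,r_i} \;\Bigm|\; \ell\in\mathbb{N},\ r_i\in\mathbb{Q}/M\,\Big\}\cup\{\mathbf{0}\},
\]
i.e.\ the real numbers whose absolute value lies in the set $\mathbb{R}^{+}/M$ from Proposition~\ref{thm-r+}, together with their negatives and $0$. I would verify that $\mathbb{R}/M$ is closed under multiplication and under the (convention-extended) inverse, so that with $\mathcal{P}$ interpreted as ``positive real number'' and $\mathbf{-1},\mathbf{0},\mathbf{1}$ interpreted in the obvious way, $\langle\mathbb{R}/M;\times,\circ^{-1},\mathbf{0},\mathbf{1},\mathbf{-1},\mathcal{P}\rangle$ is a substructure of $\langle\mathbb{R};\times,\ldots\rangle$. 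The group/field-free axioms $\texttt{M}_1$--$\texttt{M}_4$, $\texttt{M}_{10}$, $\texttt{M}_{9}^{\diamond}$ are inherited immediately, as are $\texttt{M}_{11,n}$, $\texttt{M}_{13}$, $\texttt{M}_{14}$, $\texttt{M}_{15}$ (these are all universal or easily-checked statements about signs and squares, and the sign structure of $\mathbb{R}/M$ is the full sign structure of $\mathbb{R}$). The point requiring care is the closure conditions $\texttt{M}_{12,\ell}$: since $\mathbb{Q}/M$ is closed under $x\mapsto x/d$ for every $d$ dividing $M=N!$, in particular for every $d\leqslant N$, the set $\mathbb{R}^{+}/M$ is closed under taking $d$-th roots for $d\leqslant N$; taking a $(2\ell+1)$-th root of an element of $\mathbb{R}/M$ amounts to taking the $(2\ell+1)$-th root of its absolute value (odd roots preserve sign and are unique in $\mathbb{R}$), so $\mathbb{R}/M\models\texttt{M}_{12,\ell}$ whenever $2\ell+1\leqslant N$.

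Finally I would pick a prime $\mathfrak{p}>N!$ and observe that $2^{1/\mathfrak{p}}\notin\mathbb{R}/M$ because $1/\mathfrak{p}\notin\mathbb{Q}/M$ (the denominator $\mathfrak{p}$ is coprime to every power of any number dividing $M$), so the element $\mathbf{2}\in\mathbb{R}/M$ has no $\mathfrak{p}$-th root in $\mathbb{R}/M$, i.e.\ $\langle\mathbb{R}/M;\times\rangle\not\models\texttt{M}_{12,(\mathfrak{p}-1)/2}$. Writing $\mathfrak{p}=2n+1$, this shows that the axioms of Theorem~\ref{thm-r} minus the single instance $\texttt{M}_{12,n}$ do not prove $\texttt{M}_{12,n}$, and since $N$ was arbitrary, no finite subset of the axiomatization is complete; by the opening reduction, the theory is not finitely axiomatizable. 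I expect the only mildly delicate step to be checking that the sign-and-positivity axioms $\texttt{M}_{13}$--$\texttt{M}_{15}$ and $\texttt{M}_{11,n}$ genuinely hold in $\mathbb{R}/M$ --- this is where one must be sure that passing to the substructure does not spoil the interpretation of $\mathcal{P}$ --- but since $\mathbb{R}/M$ contains $\mathbf{-1}$ and is closed under negation, its positive part is exactly $\mathbb{R}^{+}/M$ and these axioms transfer without trouble.
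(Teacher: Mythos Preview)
Your argument is correct and is essentially the same as the paper's: build the substructure $\{0\}\cup\{(-1)^{\iota}\prod_{i<\ell}\mathfrak{p}_i^{r_i}\mid r_i\in\mathbb{Q}/M\}$, check that it satisfies every axiom of Theorem~\ref{thm-r} except the odd-root axiom $\texttt{M}_{12,n}$ for a prime $2n+1>M$. The only cosmetic difference is that the paper takes $M=(2N+1)!$ so that $\texttt{M}_{12,n}$ is verified for all $n\leqslant N$, whereas your choice $M=N!$ yields $\texttt{M}_{12,\ell}$ only for $2\ell+1\leqslant N$; since $N$ is arbitrary, this is immaterial.
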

\begin{proof}
For $M=(2N+1)!$ the following set of real numbers     $$\mathbb{R}/M=\{0\}\cup\{(-1)^\iota\prod_{i<\ell}\mathfrak{p}_i^{r_i}
\mid \iota\in\{0,1\},\ell\in\mathbb{N},r_i\in\mathbb{Q}/M\}$$
 with the multiplication operation and the positivity property satisfies  the axioms $$\texttt{M}_1,\texttt{M}_2,\texttt{M}_3,
 \texttt{M}_4,\texttt{M}_9^\diamond,\texttt{M}_{10},
 \texttt{M}_{11,n},\texttt{M}_{13},\texttt{M}_{14},
 \texttt{M}_{15}  \;  (\textrm{for any } n\geqslant 1)$$ and the instances of $\texttt{M}_{12,n}$ for $n=1,2,\cdots,N$, but does not satisfy the instance of $\texttt{M}_{12,n}$ when $2n+1$ is a prime number greater than $M$.
\end{proof}

\section{The Multiplicative Theory of the (Positive) Rational Numbers}
It will be highly fruitful if we have a look at the theory of   $\langle\mathbb{Z};+\rangle$ before axiomatizing $\langle\mathbb{Q}^+;\times\rangle$. Let us note that  the structure $\langle\mathbb{Q}^+;\times,\circ^{-1},{\bf 1}\rangle$ is an abelian group and $\langle\mathbb{Q}^+;\times,\circ^{-1},{\bf 1},<\rangle$ is a regularly dense ordered abelian group (in the terminology of~\cite{robinson-zakon}). The theory of all regularly dense ordered abelian groups is proved to be decidable in~\cite{robinson-zakon}.

\subsection{The Additive Theory of the Integer Numbers}
The theory of the structure $\langle\mathbb{Z};+\rangle$ does not admit quantifier-elimination since for example  the formula $\exists x (a+n\centerdot x = b)$ is not equivalent with a quantifier-free formula (even in the language $\{+,-,{\bf 0},{\bf 1}\}$), where $n\centerdot u={u+\cdots+u} \; [n\!\!-\!\!{\rm times}]$.
However, adding the congruence relations $\{\equiv_{n}\}_{n>1}$ (modulo  standard natural numbers) to the language enables us to prove quantifier-elimination. By definition $a\equiv_n b$ holds when $a-b$ is divisible by (is a multiple of) $n$. For that we use the following version of the generalized Chinese remainder theorem (which is a form of quantifier-elimination).
\begin{proposition}[Generalized Chinese Remainder~\cite{mahler}]\label{crt1}
For  integers $m_0,m_1,\cdots,m_k\geqslant 2$ and $r_0,r_1,\cdots,r_k$ we have
$$\exists x \big(\bigwedge\hspace{-3.65ex}\bigwedge_{0\leqslant i\leqslant k} x\equiv_{m_i} r_i\big) \quad  \iff  \quad  \bigwedge\hspace{-4.75ex}\bigwedge_{0\leqslant i<j\leqslant k} r_i \equiv_{d_{i,j}} r_j$$
where $d_{i,j}$ is the greatest common divisor of $m_i$ and $m_j$ (for each $i<j$).
\end{proposition}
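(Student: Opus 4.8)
The plan is to establish the nontrivial (right-to-left) implication by induction on $k$, reducing at each stage to a two-modulus lemma; the left-to-right direction is immediate, since if $x\equiv_{m_i}r_i$ and $x\equiv_{m_j}r_j$ then $r_i-r_j=(r_i-x)+(x-r_j)$ is a multiple of every common divisor of $m_i$ and $m_j$, in particular of $d_{i,j}$.

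The two-modulus lemma I would isolate first: for $m_0,m_1\geqslant 2$, the system $x\equiv_{m_0}r_0\wedge x\equiv_{m_1}r_1$ has a solution if and only if $d_{0,1}\mid(r_0-r_1)$. To prove it, look for a solution of the shape $x=r_0+m_0t$; the second congruence then becomes the linear congruence $m_0t\equiv_{m_1}r_1-r_0$, which by B\'ezout's identity is solvable in $t$ exactly when $d_{0,1}=\gcd(m_0,m_1)$ divides $r_1-r_0$, equivalently $r_0-r_1$. Alongside this I would record the elementary fact that whenever a system $\bigwedge_{i\leqslant k}x\equiv_{m_i}r_i$ is solvable, its full solution set is a single residue class modulo $\mathrm{lcm}(m_0,\ldots,m_k)$, because $a\equiv b$ modulo each $m_i$ precisely when $a\equiv b$ modulo their least common multiple.

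Now the induction, with trivial base case $k=0$. Assuming the statement for $k-1$ moduli, suppose $r_i\equiv_{d_{i,j}}r_j$ for all $0\leqslant i<j\leqslant k$. By the inductive hypothesis there is $x_0$ with $x_0\equiv_{m_i}r_i$ for all $i<k$, and by the previous remark the solution set of these first $k$ congruences is $\{x:x\equiv_{L}x_0\}$ with $L=\mathrm{lcm}(m_0,\ldots,m_{k-1})$. Adjoining $x\equiv_{m_k}r_k$ and applying the two-modulus lemma to $L$ and $m_k$, the enlarged system is solvable iff $\gcd(L,m_k)\mid(x_0-r_k)$. Here I would use the gcd--lcm identity
\[
\gcd\big(\mathrm{lcm}(m_0,\ldots,m_{k-1}),\,m_k\big)=\mathrm{lcm}\big(\gcd(m_0,m_k),\ldots,\gcd(m_{k-1},m_k)\big),
\]
which follows by comparing $p$-adic valuations on the two sides (it is just $\min$ distributing over $\max$). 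Since $d_{i,k}\mid m_i$ gives $x_0\equiv_{d_{i,k}}r_i$ and the hypothesis gives $r_i\equiv_{d_{i,k}}r_k$, we get $d_{i,k}\mid(x_0-r_k)$ for every $i<k$, hence $\mathrm{lcm}_{i<k}d_{i,k}\mid(x_0-r_k)$; by the identity this is precisely $\gcd(L,m_k)\mid(x_0-r_k)$, so the full system has a solution and the induction closes.

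The main obstacle is organisational rather than conceptual: one must set up the induction so that the solution set of the already-handled congruences is captured as a single residue class modulo the lcm, and then transport the pairwise hypotheses through to the one divisibility $\gcd(L,m_k)\mid(x_0-r_k)$ by means of the gcd--lcm identity. The remaining ingredients, namely B\'ezout's identity and routine divisibility arithmetic, are entirely standard.
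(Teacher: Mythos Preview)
Your proof is correct, but it takes a genuinely different route from the paper's. You proceed by induction on the number of moduli, collapsing the first $k$ congruences to a single one modulo $L=\mathrm{lcm}(m_0,\ldots,m_{k-1})$ and then invoking the two-modulus case together with the distributive identity $\gcd(L,m_k)=\mathrm{lcm}_{i<k}\gcd(m_i,m_k)$. The paper instead works prime by prime: for each prime $p$ dividing $\prod_i m_i$ it picks an index $\ell_p$ at which the $p$-adic valuation of $m_i$ is maximal, and then applies the \emph{classical} CRT (with pairwise coprime moduli $p^{\alpha(\ell_p)}$) to obtain a single $x$ satisfying $x\equiv_{p^{\alpha(\ell_p)}}r_{\ell_p}$ for every such $p$; the pairwise hypotheses $r_i\equiv_{d_{i,j}}r_j$ are then used locally at each prime to show this $x$ solves all of the original congruences. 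Your approach is the standard inductive one and is entirely self-contained apart from B\'ezout; the paper's approach avoids the induction and the lattice identity but imports the coprime CRT as a black box and requires the (slightly slicker) idea of selecting a dominant residue for each prime.
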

\begin{proof}The `only if' ($\Longrightarrow$) direction is trivial; for the other direction let $p$ be any prime which divides the product $m_0\cdot m_1\cdot\ldots\cdot m_k$, and let $\alpha(i)$ be the greatest number $u$ such that $p^u$ divides $m_i$. Fix an $\ell_p\in\{0,1,\cdots,k\}$ (which depends on $p$) such that $\alpha(\ell_p)$ is the maximum of $\alpha(0),\alpha(1),\cdots,\alpha(k)$. The set of such prime $p$'s is finite. By the (non-generalized) Chinese Remainder Theorem (see e.g. \cite[Chapter~I, Section~6]{smorynski}) there exists some integer $x$ such that $$\bigwedge\hspace{-2.7ex}\bigwedge_{p\in\mathbb{P}} x\equiv_{p^{\alpha(\ell_p)}}r_{\ell_p}.$$
We show that $x\equiv_{m_i}r_i$ holds for any $i$. Fix an $i$; it suffices to show that $x\equiv_{p^{\alpha(i)}}r_i$ holds for any prime $p$ (in the above mentioned finite set). By the definition of $\ell_p$ we have $\alpha(\ell_p)\geqslant\alpha(i)$, so by the assumption $r_{\ell_p}\equiv_{d_{i,\ell_p}}r_i$ we have $r_{\ell_p}\equiv_{p^{\alpha(i)}}r_i$. Thus $x\equiv_{p^{\alpha(\ell_p)}}r_{\ell_p}$ implies $x\equiv_{p^{\alpha(i)}}r_i$.
\end{proof}
\begin{corollary}[An Infinite Version of the Chinese Remainder Theorem]\label{cor-crt}
For  integers $m_0,m_1,\cdots,m_k\geqslant 2$, and $r_0,r_1,\cdots,r_k,n_0,n_1,\cdots,n_\ell$ we have
$$\exists x \big(\bigwedge\hspace{-3.65ex}\bigwedge_{0\leqslant i\leqslant k} x\equiv_{m_i} r_i \;\wedge\; \bigwedge\hspace{-3.5ex}\bigwedge_{0\leqslant \iota\leqslant \ell} x\neq n_\iota\big) \quad  \iff  \quad  \bigwedge\hspace{-4.7ex}\bigwedge_{0\leqslant i<j\leqslant k} r_i \equiv_{d_{i,j}} r_j$$
where $d_{i,j}$ denotes  the greatest common divisor of $m_i$ and $m_j$.
\end{corollary}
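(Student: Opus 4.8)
The plan is to derive Corollary~\ref{cor-crt} from the Generalized Chinese Remainder Theorem (Proposition~\ref{crt1}) by handling the finitely many exclusion constraints $x\neq n_\iota$ as a counting argument. The right-to-left direction already delivers, via Proposition~\ref{crt1}, \emph{some} solution $x_0$ of the system of congruences $\bigwedge_{i\leqslant k} x\equiv_{m_i}r_i$; the point is to show that one can always move to another solution that simultaneously avoids $n_0,\dots,n_\ell$. The left-to-right direction is trivial exactly as before (any witness to the congruences alone satisfies the weaker conclusion), so the whole content is on the right-to-left side.

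First I would observe that the solution set of $\bigwedge_{i\leqslant k} x\equiv_{m_i}r_i$, when nonempty, is a full residue class modulo $m=\operatorname{lcm}(m_0,\dots,m_k)$: if $x_0$ is one solution then $x_0+\mathbb{Z}\centerdot m$ is precisely the set of all solutions (using that $m_i\mid m$ for each $i$). This is standard and I would state it in one line. Hence the set of solutions is infinite, whereas the ``forbidden'' set $\{n_0,\dots,n_\ell\}$ is finite; therefore there exists an integer $t$ with $x_0+t\centerdot m\notin\{n_0,\dots,n_\ell\}$, and this $x_0+t\centerdot m$ witnesses the left-hand side of the desired equivalence. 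That is the entire argument.

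The only mild subtlety — and the step I would flag as the place to be careful rather than the ``hard part'' — is the edge cases: when the congruence conjunction is empty ($k$ could a priori be forced to be at least $0$, i.e. there is at least $m_0$), the right-hand side $\bigwedge_{i<j} r_i\equiv_{d_{i,j}}r_j$ is vacuously true and one still needs infinitely many $x$, which is fine since $\mathbb{Z}$ is infinite and only finitely many values are excluded; and when the $m_i$-congruences are \emph{inconsistent}, Proposition~\ref{crt1} tells us the right-hand side fails, so both sides are false and there is nothing to prove. I would dispose of these in a sentence. No genuine obstacle arises: once Proposition~\ref{crt1} is in hand, Corollary~\ref{cor-crt} is a routine ``infinite pigeonhole'' upgrade, and the proof is two or three lines.

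\begin{proof}
The direction ($\Longrightarrow$) is immediate: any $x$ satisfying the left-hand side satisfies $\bigwedge_{i\leqslant k}x\equiv_{m_i}r_i$, so the conclusion follows from the ($\Longrightarrow$) direction of Proposition~\ref{crt1}. For ($\Longleftarrow$), assume $\bigwedge_{i<j}r_i\equiv_{d_{i,j}}r_j$. By Proposition~\ref{crt1} there is an integer $x_0$ with $\bigwedge_{i\leqslant k}x_0\equiv_{m_i}r_i$. Put $m=\operatorname{lcm}(m_0,\dots,m_k)$; since $m_i\mid m$, every integer of the form $x_0+t\centerdot m$ (for $t\in\mathbb{Z}$) again satisfies $\bigwedge_{i\leqslant k}x\equiv_{m_i}r_i$. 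As $m\geqslant 1$, the set $\{x_0+t\centerdot m\mid t\in\mathbb{Z}\}$ is infinite, while $\{n_0,\dots,n_\ell\}$ is finite; hence some $t$ yields $x_0+t\centerdot m\notin\{n_0,\dots,n_\ell\}$, and this integer witnesses the left-hand side. (If the congruence conjunction is empty the argument is the same with $x_0$ arbitrary; and if $\bigwedge_{i\leqslant k}x\equiv_{m_i}r_i$ has no solution then by Proposition~\ref{crt1} the right-hand side already fails, so the equivalence holds trivially.)
\end{proof}
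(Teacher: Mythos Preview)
Your proof is correct and follows essentially the same approach as the paper: obtain one solution $x_0$ from Proposition~\ref{crt1}, then shift by multiples of a common modulus to produce infinitely many solutions and avoid the finite forbidden set $\{n_0,\dots,n_\ell\}$. The only cosmetic difference is that the paper shifts by the product $m_0\cdot m_1\cdots m_k$ rather than the least common multiple, and does not spell out the ($\Longrightarrow$) direction or the edge cases you mention.
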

\begin{proof}
If the right-hand-side holds then by Proposition~\ref{crt1} there exists some $x_0$ such that $\bigwedge\hspace{-1.85ex}\bigwedge_{0\leqslant i\leqslant k} x_0\equiv_{m_i} r_i$. To make sure that $x_0$ could be taken to be different from all $n_\iota$'s, it suffices to note that for any arbitrarily large  $L$ the number $x=m_0\cdot m_1\cdot\ldots\cdot m_k \cdot L + x_0$  too satisfies $\bigwedge\hspace{-1.85ex}\bigwedge_{0\leqslant i\leqslant k} x\equiv_{m_i} r_i$.
\end{proof}
\begin{proposition}[Infinite Axiomatizablity of $\langle\mathbb{Z};+\rangle$]\label{thm-z}
The following theory
completely axiomatizes the additive theory of the integer numbers and, moreover, the structure $\langle\mathbb{Z};+,-,{\bf 0},{\bf 1},\{\equiv_n\}_{n>1}\rangle$ admits quantifier elimination.
\begin{center}
\begin{tabular}{ll}
($\texttt{A}_1$) \; $\forall x,y,z\,\big(x+(y+z)=(x+y)+z\big)$  & ($\texttt{A}_2$) \; $\forall x\,\big(x+\mathbf{0}=x\big)$ \\
($\texttt{A}_3$) \; $\forall x\,\big(
 x+ (-x)=\mathbf{0}\big)$ & ($\texttt{A}_4$) \; $\forall x,y\,\big(x+y=y+x\big)$ \\
 ($\texttt{A}_{5,n}$) \; $\forall
x\,\big(n\centerdot x={\bf 0}\longrightarrow x={\bf 0}\big)$ & ($\texttt{A}_6^\circ$) \; $ {\bf 1} \neq {\bf 0}$ \\
($\texttt{A}_{7,n}^\circ$) \; $\forall x\exists! y\,\big(\bigvee\hspace{-1.85ex}\bigvee_{i<n}x=n\centerdot y + \bar{i}\,\big)$ & Where $\bar{i}={\bf 1}+\cdots+{\bf 1}$ (\text{for }$i-$\text{times}) \\
and $n > 1$ is a natural number. &
\end{tabular}
\end{center}
\end{proposition}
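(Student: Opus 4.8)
The plan is to establish quantifier elimination first and then read off completeness. Let $T$ be the listed theory together with the evident defining axioms $\forall a,b\,\big(a\equiv_n b\leftrightarrow\exists z\,(a=b+n\centerdot z)\big)$ for the new relation symbols. By Lemma~\ref{mainlem} it suffices to eliminate $\exists x$ from a conjunction of literals. Using the abelian-group axioms $\texttt{A}_1$--$\texttt{A}_4$, every term is provably of the form $k\centerdot x+t$ with $t$ an $x$-free term and $k\in\mathbb{Z}$, so every literal containing $x$ is $T$-equivalent to one of $k\centerdot x=t$, $k\centerdot x\neq t$, $k\centerdot x\equiv_e t$ or $k\centerdot x\not\equiv_e t$ with $k\geqslant 1$. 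Hence the task reduces to eliminating the quantifier from
$$\exists x\Big(\bigwedge_i n_i\centerdot x=t_i\;\wedge\;\bigwedge_j m_j\centerdot x\neq s_j\;\wedge\;\bigwedge_k p_k\centerdot x\equiv_{e_k}u_k\;\wedge\;\bigwedge_l q_l\centerdot x\not\equiv_{f_l}v_l\Big).$$

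The first step is to clear the coefficients of $x$. Let $c$ be a common multiple of all the $n_i,m_j,p_k,q_l$. The cancellation axioms yield, for every divisor $\lambda$ of $c$, the $T$-provable equivalences $a=b\leftrightarrow\lambda\centerdot a=\lambda\centerdot b$ and $a\equiv_e b\leftrightarrow\lambda\centerdot a\equiv_{\lambda e}\lambda\centerdot b$ (the nontrivial directions cancelling $\lambda$ out of $\lambda\centerdot(e\centerdot z)=\lambda\centerdot(a-b)$); multiplying each literal through, every occurrence of $x$ becomes $c\centerdot x$. Substituting a fresh variable $y$ for $c\centerdot x$ and noting that $y\equiv_c{\bf 0}$ is by definition $\exists z\,(y=c\centerdot z)$, the formula becomes $T$-equivalent to
$$\exists y\Big(y\equiv_c{\bf 0}\;\wedge\;\bigwedge_i y=t_i'\;\wedge\;\bigwedge_j y\neq s_j'\;\wedge\;\bigwedge_k y\equiv_{e_k'}u_k'\;\wedge\;\bigwedge_l y\not\equiv_{f_l'}v_l'\Big),$$
in which every literal has coefficient $1$ on $y$ (when $c=1$ the first conjunct is simply dropped).

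Next I eliminate $\exists y$ from this coefficient-free conjunction. If it contains an equation $y=t_0'$, substituting $t_0'$ for $y$ throughout gives a quantifier-free formula. Otherwise only congruences, non-congruences and dis-equations survive; using the existence and uniqueness of remainders ($\texttt{A}_{7,f}^\circ$), each $y\not\equiv_f v$ is rewritten as the disjunction $\bigvee_{0<r<f}y\equiv_f(v+\bar r)$, and distributing these disjunctions outward while pushing $\exists y$ inside reduces matters to formulas $\exists y\big(\bigwedge_k y\equiv_{n_k}w_k\wedge\bigwedge_j y\neq s_j'\big)$. This is exactly the shape treated by Corollary~\ref{cor-crt}, which makes it $T$-equivalent to the quantifier-free $\bigwedge_{k<k'}w_k\equiv_{d_{k,k'}}w_{k'}$; and if no congruence survives, the formula holds because $\texttt{A}_6^\circ$ with the cancellation axioms forces the universe to be infinite, so it is equivalent to ${\bf 0}={\bf 0}$. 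Since all these steps are effective, this completes the quantifier elimination.

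Finally, completeness: by the elimination, $T$ proves every sentence equivalent to a Boolean combination of atomic sentences, and every atomic sentence reduces (via $\texttt{A}_1$--$\texttt{A}_4$) to one of the form $\bar m={\bf 0}$ or $\bar m\equiv_n{\bf 0}$ with $m\in\mathbb{Z}$; the axioms $\texttt{A}_{5,m}$ and $\texttt{A}_6^\circ$ decide the former according to whether $m=0$, and $\texttt{A}_{7,n}^\circ$ decides the latter according to whether $n\mid m$ (its uniqueness clause pinning down the residue). Hence $T$ is complete, and being satisfied by $\langle\mathbb{Z};+,-,{\bf 0},{\bf 1},\{\equiv_n\}\rangle$ it axiomatizes the theory of that structure, and a fortiori the additive theory of $\mathbb{Z}$. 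I expect the step requiring the most care to be the coefficient-clearing and the passage to the fresh variable $y$: one must verify that introducing $c\centerdot x$ and replacing it by $y$ are genuine $T$-equivalences, which is precisely where the cancellation axioms $\texttt{A}_{5,n}$ and the defining axiom for $\equiv_c$ are used; the remainder is bookkeeping, with Corollary~\ref{cor-crt} carrying the only substantive content. Note that, in contrast to Presburger arithmetic, the absence of an ordering means no analysis of bounds on $x$ is needed, which shortens the argument considerably.
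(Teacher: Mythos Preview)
Your proof is correct and follows essentially the same route as the paper: normalize literals to the form $k\centerdot x\;\square\;t$, clear the coefficients of $x$ via the cancellation axioms, pass to a fresh variable $y$ with the extra congruence $y\equiv_c{\bf 0}$, substitute if an equation is present, and otherwise appeal to Corollary~\ref{cor-crt}. The only organizational difference is that the paper disposes of negated congruences at the outset (so its normalized conjunction contains no $\not\equiv$), whereas you carry them along and rewrite them as disjunctions only after the coefficient-clearing and the substitution; this is immaterial. Your explicit verification that $T$ decides all atomic sentences, and hence is complete, is a detail the paper leaves implicit.
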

\begin{proof}
Let us first note that $\texttt{A}_{7,n}^\circ$ is equivalent with  $\forall x\,\big( \underline{\underline{\bigvee}}\hspace{-1.5ex}
\underline{\underline{\bigvee}}_{i<n}
x\equiv_n\bar{i}\,\big)$, where $\underline{\underline{\vee}}$ is the exclusive disjunction; whence
$\underline{\underline{\bigvee}}\hspace{-1.5ex}
\underline{\underline{\bigvee}}_{i}\psi_i \iff \bigvee\hspace{-1.85ex}\bigvee_{i}\psi_i \;\wedge\; \bigwedge\hspace{-1.85ex}\bigwedge_{i\neq j}\neg(\psi_i\wedge \psi_j)$.
So, the negation signs behind the congruences can be eliminated by the equivalences $(t\not\equiv_n u) \iff \bigvee\hspace{-1.85ex}\bigvee_{0<i<n} (t\equiv_n u+\bar{i}\,)$.
By the Main Lemma (\ref{mainlem}) it suffices to eliminate the quantifier of the formula
\begin{equation}\label{r-3}
\exists x ( \bigwedge\hspace{-2.7ex}\bigwedge_{\iota<\alpha} p_\iota\centerdot x\equiv_{q_\iota}r_\iota \;\wedge\; \bigwedge\hspace{-2.5ex}\bigwedge_{i<\ell} n_i\centerdot x=t_i \;\wedge\;  \bigwedge\hspace{-2.7ex}\bigwedge_{j<k} m_j\centerdot x\neq s_j).
\end{equation}
By $\texttt{A}_5$ (and $\texttt{A}_3$) we have $a=b \longleftrightarrow n\centerdot a = n\centerdot b$, and so $a\equiv_m b \longleftrightarrow n\centerdot a \equiv_{mn} n\centerdot b$; whence we can assume that all $p_\iota$'s, $n_i$'s and $m_j$'s in \eqref{r-3} are equal to each other. Thus we show the equivalence of
\begin{equation*}\label{r-4}
\exists x ( \bigwedge\hspace{-2.7ex}\bigwedge_{\iota<\alpha} h\centerdot x\equiv_{q_\iota}r_\iota \;\wedge\; \bigwedge\hspace{-2.5ex}\bigwedge_{i<\ell} h\centerdot x=t_i \;\wedge\;  \bigwedge\hspace{-2.7ex}\bigwedge_{j<k} h\centerdot x\neq s_j)
\end{equation*}
with a quantifier-free formula. But this is equivalent with the following formula (for $y=h\centerdot x$):
\begin{equation}\label{r-5}
\exists y \big( y\equiv_h {\bf 0} \;\wedge\;  \bigwedge\hspace{-2.7ex}\bigwedge_{\iota<\alpha}  y\equiv_{q_\iota}r_\iota \;\wedge\; \bigwedge\hspace{-2.5ex}\bigwedge_{i<\ell} y=t_i \;\wedge\;  \bigwedge\hspace{-2.7ex}\bigwedge_{j<k} y\neq s_j \big).
\end{equation}
Now, if $\ell>0$ then \eqref{r-5} is equivalent with the quantifier-free formula
$$t_0\equiv_h {\bf 0} \;\wedge\;  \bigwedge\hspace{-2.7ex}\bigwedge_{\iota<\alpha}  t_0\equiv_{q_\iota}r_\iota \;\wedge\; \bigwedge\hspace{-2.5ex}\bigwedge_{i<\ell} t_0=t_i \;\wedge\;  \bigwedge\hspace{-2.7ex}\bigwedge_{j<k} t_0\neq s_j$$
 and if $\ell=0$ then \eqref{r-5} is of the form
$$\exists y \big(   \bigwedge\hspace{-2.5ex}\bigwedge_{i<\ell}  y\equiv_{m_i}r_i \;\wedge\ \bigwedge\hspace{-2.7ex}\bigwedge_{j<k} y\neq s_j \big)$$
which is equivalent with a quantifier-free formula by Corollary~\ref{cor-crt}.
\end{proof}

Again this axiomatization of the additive theory of the integer numbers cannot be finite:

\begin{proposition}[No Finite Axiomatization for  $\langle\mathbb{Z};+\rangle$]\label{thm-infz}
The theory of the structures $\langle\mathbb{Z};+\rangle$ is  not finitely axiomatizable.
\end{proposition}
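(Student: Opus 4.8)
The plan is to reuse the template of the earlier non-finite-axiomatizability results (e.g. Proposition~\ref{thm-infa}). Suppose $\mathrm{Th}(\langle\mathbb{Z};+\rangle)$ were axiomatized by finitely many $\{+\}$-sentences $B_1,\ldots,B_k$. These hold in $\langle\mathbb{Z};+\rangle$, hence in its expansion $\langle\mathbb{Z};+,-,\mathbf 0,\mathbf 1,\{\equiv_n\}_{n>1}\rangle$, hence (by Proposition~\ref{thm-z}) they are provable from $\{\texttt{A}_1,\texttt{A}_2,\texttt{A}_3,\texttt{A}_4,\texttt{A}_6^\circ\}\cup\{\texttt{A}_{5,n}\mid n>1\}\cup\{\texttt{A}_{7,n}^\circ\mid n>1\}$; by compactness a finite subfamily $\Delta$ of these axioms already proves $B_1\wedge\cdots\wedge B_k$, so $\Delta$ involves only finitely many instances of $\texttt{A}_{7,n}^\circ$. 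I would then fix a natural number $N$ exceeding every index occurring in $\Delta$ and set $M=N!$.

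The crucial point --- and the reason the divisible-type witness $\mathbb{Q}/M$ of the additive cases is inadequate here --- is that $\texttt{A}_{7,n}^\circ$ carries a \emph{uniqueness} clause, which fails the moment $\mathbf 1$ becomes divisible by $n$. So instead I would take $G=\mathbb{Z}\oplus(\mathbb{Q}/M)$, interpreting $\mathbf 1:=\langle 1,0\rangle$, $\mathbf 0:=\langle 0,0\rangle$, $-$ as negation, and $\equiv_n$ by its usual $+$-definition. This $G$ is a torsion-free abelian group with $\mathbf 1\neq\mathbf 0$, so it models $\texttt{A}_1,\texttt{A}_2,\texttt{A}_3,\texttt{A}_4$, every $\texttt{A}_{5,n}$, and $\texttt{A}_6^\circ$. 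For $2\leq n\leq N$ we have $n\mid M$, so $\mathbb{Q}/M$ is $n$-divisible and torsion-free (it is closed under $x\mapsto x/n$, as noted after Definition~\ref{def-a}); hence $n\centerdot G=n\mathbb{Z}\oplus(\mathbb{Q}/M)$, and for any $\langle a,q\rangle\in G$ the equation $\langle a,q\rangle=n\centerdot\langle b,r\rangle+\bar i$ with $i<n$ forces $r=q/n$ uniquely and then $b,i$ uniquely by division in $\mathbb{Z}$, so $G\models\texttt{A}_{7,n}^\circ$ for every $n\leq N$. Therefore $G\models\Delta$, so $G\models B_1\wedge\cdots\wedge B_k$, and since these sentences axiomatize a complete theory, $G\equiv\langle\mathbb{Z};+\rangle$.

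Finally I would contradict this equivalence. Pick a prime $\mathfrak p>M$; then $\mathfrak p\nmid M$, so $1/\mathfrak p\notin\mathbb{Q}/M$, and hence the element $\langle 0,1\rangle\in G$ is congruent modulo $\mathfrak p\centerdot G$ to none of $\mathbf 0,\mathbf 1,\ldots,(\mathfrak p-1)\mathbf 1$ (the required second coordinate $1/\mathfrak p$ is not available); thus $G\not\models\texttt{A}_{7,\mathfrak p}^\circ$ --- indeed $\mathfrak p\centerdot(\mathbb{Q}/M)\subsetneq\mathbb{Q}/M$ gives $[\,G:\mathfrak p\centerdot G\,]=\mathfrak p^2$, so $\mathfrak p$ cosets cannot exhaust $G$. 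Read back into the pure signature, $G$ falsifies the $\{+\}$-sentence ``$\exists x_1,\ldots,x_{\mathfrak p}\,\forall y\,\bigvee_{1\leq i\leq\mathfrak p}\exists z\,(\mathfrak p\centerdot z = y-x_i)$'', which is true in $\langle\mathbb{Z};+\rangle$, contradicting $G\equiv\langle\mathbb{Z};+\rangle$. I expect every step here except the choice of the group to be routine bookkeeping; the one genuine design decision is to use $\mathbb{Z}\oplus(\mathbb{Q}/M)$ rather than $\mathbb{Q}/M$, so that $\mathbf 1$ stays ``undivided enough'' to preserve the uniqueness clause of $\texttt{A}_{7,n}^\circ$ for $n\leq N$ while divisibility by the large prime $\mathfrak p$ genuinely fails --- the mild signature mismatch between $\{+\}$ and the language of Proposition~\ref{thm-z} being absorbed, as above, by passing to the expansion $G$.
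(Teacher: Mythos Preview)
Your proof is correct, and it takes a genuinely different route from the paper. The paper's witness is the rank-one group $\mathbb{Z}/\mathfrak{p}=\{a/\mathfrak{p}\mid a\in\mathbb{Z}\}$ for a single large prime $\mathfrak{p}$, with $\mathbf{1}$ interpreted as the rational number $1$; it verifies $\texttt{A}_{7,n}^\circ$ for $n<\mathfrak{p}$ via a B\'ezout computation and shows that $\texttt{A}_{7,n}^\circ$ fails once $\mathfrak{p}\mid n$. Your witness $G=\mathbb{Z}\oplus(\mathbb{Q}/M)$ is rank two, which makes the verification of $\texttt{A}_{7,n}^\circ$ for $n\leqslant N$ purely componentwise (no B\'ezout needed), at the cost of a slightly more elaborate construction.

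There is a real payoff in your choice. The paper's group $\mathbb{Z}/\mathfrak{p}$ is isomorphic to $\mathbb{Z}$ \emph{as an abelian group} (via $a/\mathfrak{p}\mapsto a$), so every $\{+\}$-sentence true in $\mathbb{Z}$ is also true in $\mathbb{Z}/\mathfrak{p}$; the failure exhibited there is visible only through the interpretation of the constant $\mathbf{1}$, and the argument as written literally establishes non-finite-axiomatizability for the expanded signature $\{+,-,\mathbf 0,\mathbf 1,\{\equiv_n\}\}$ rather than for $\{+\}$ alone. Your group, by contrast, satisfies $[G:\mathfrak{p}\centerdot G]=\mathfrak{p}^2$, so you obtain an honest $\{+\}$-sentence separating $G$ from $\mathbb{Z}$, and your contradiction lands in the pure additive language without further bridging. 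This is precisely the ``mild signature mismatch'' you flag at the end, and you resolve it more cleanly than the paper does.
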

\begin{proof}
Let $\mathfrak{p}$ be an arbitrarily large prime. Trivially, $\mathbb{Z}/\mathfrak{p}$ satisfies $\texttt{A}_1,\texttt{A}_2,\texttt{A}_3,\texttt{A}_4,
\texttt{A}_{5,n}$ and $\texttt{A}_6^\circ$ (for each $n>1$).
However, $\mathbb{Z}/\mathfrak{p}$ does not satisfy $\texttt{A}_{7,n}^\circ$ when $n=k\mathfrak{p}$ is a multiple of $\mathfrak{p}$: if there were some $y\in\mathbb{Z}/\mathfrak{p}$ and $i<k\mathfrak{p}$ such that $1/\mathfrak{p}=k\mathfrak{p}\centerdot y + i$ then $y=b/\mathfrak{p}$ for some $b\in\mathbb{Z}$, and so $1/\mathfrak{p} = kb +i \in\mathbb{Z}$; a contradiction!
But, $\mathbb{Z}/\mathfrak{p}$ satisfies $\texttt{A}_{7,n}^\circ$ when $n<\mathfrak{p}$: since $n$ is relatively prime to $\mathfrak{p}$ then by B\'{e}zout's Lemma there are some $a,b$ such that $an+b\mathfrak{p}=1$. Fix an element $x=z/\mathfrak{p}\in\mathbb{Z}/\mathfrak{p}$ for some $z\in\mathbb{Z}$. By the Division Algorithm there are some $q,i$ such that $bz=nq+i$ and $0\leqslant i<n$. Now, for $y=(az+\mathfrak{p}q)/\mathfrak{p}\in\mathbb{Z}/\mathfrak{p}$ we have $n\centerdot y + i = (naz)/\mathfrak{p} + (n\mathfrak{p}q)/\mathfrak{p} + i = z(an/\mathfrak{p})+bz = z(an+b\mathfrak{p})/\mathfrak{p} =x$.
It can be seen that this $y$ (and also $i$) is unique for $x$. Since if $x=n\centerdot y'+j$ for some $y'=\beta/\mathfrak{p}$ ($\beta\in\mathbb{Z}$) and $0\leqslant j<n$ then for $\alpha=az+\mathfrak{p}q$ we have $\mathfrak{p}(j-i)=n(\alpha-\beta)$ and so by $(n,\mathfrak{p})=1$  the number $n$ should divide $j-i$; thus $j=i$ (because of $0\leqslant i,j<n$) whence $\alpha=\beta$ which implies that $y'=y$.
So, no  finite number of the instances of $\texttt{A}_{7,n}^\circ$ (together with $\texttt{A}_1,\texttt{A}_2,\texttt{A}_3,\texttt{A}_4,
\texttt{A}_{5,n}$ and $\texttt{A}_6^\circ$) can  imply all the instances of $\texttt{A}_{7,n}^\circ$.
\end{proof}

\subsection{The Multiplicative Theory of the Positive Rational Numbers}
We need a version of the Generalized Chinese Remainder Theorem which has more information than Proposition~\ref{crt1} and Corollary~\ref{cor-crt}.
\begin{proposition}[General Chinese Remainder~\cite{ore}]\label{th-crt}
The system $\{x\equiv_{m_i}r_i\}_{i<\ell}$ of congruence equations
has a solution in $\mathbb{Z}$ if and only if for every $i\not=j$,
$r_i\equiv_{d_{i,j}}r_j$ where $d_{i,j}$ is the greatest common
divisor of $m_i$ and $m_j$. Moreover, if $m$ is the least
common multiple of $m_i$'s then the solution $x_0$ {\rm (}if
exists{\rm )} is a linear combination of $r_i(m/m_i)$'s and is  unique modulo  $m$; so all the solutions will be
of the form $L\!\cdot\!m+x_0$ for some (arbitrary)  $L\in\mathbb{Z}$.
\end{proposition}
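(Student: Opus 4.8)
The plan is to build on Proposition~\ref{crt1}, whose prime-by-prime localization already supplies almost everything. The solvability criterion asserted here is exactly the equivalence of Proposition~\ref{crt1} (now without the extra inequalities $x\neq n_\iota$ of Corollary~\ref{cor-crt}), so the first half needs nothing new: the $(\Longrightarrow)$ direction is the trivial remark that a common solution $x$ satisfies $r_i\equiv_{d_{i,j}}x\equiv_{d_{i,j}}r_j$, and the $(\Longleftarrow)$ direction is the argument given there.

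For the ``moreover'' part I would first single out the canonical solution produced by that argument. For each prime $p$ dividing $m=\mathrm{lcm}(m_0,\dots,m_{\ell-1})$ let $\beta_p=v_p(m)=\max_i v_p(m_i)$ (here $v_p$ denotes the $p$-adic valuation) and fix an index $\ell_p$ with $v_p(m_{\ell_p})=\beta_p$. The moduli $p^{\beta_p}$, for $p\mid m$, are pairwise coprime with product $m$, so the classical Chinese Remainder Theorem yields an $x_0$, unique modulo $m$, with $x_0\equiv_{p^{\beta_p}}r_{\ell_p}$ for all such $p$; and the localization argument of Proposition~\ref{crt1} shows this $x_0$ solves the whole system. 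Uniqueness modulo $m$ now transfers to the system: if $x_0,x_1$ are both solutions then $m_i\mid x_0-x_1$ for every $i$, hence $m\mid x_0-x_1$; conversely every $L\cdot m+x_0$ is again a solution because each $m_i$ divides $m$. This is precisely the asserted form of the solution set.

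The one genuinely non-routine point --- and the step I expect to be the main obstacle --- is checking that $x_0$ is a $\mathbb{Z}$-linear combination of the $r_i(m/m_i)$. Here I would use the explicit classical formula $x_0=\sum_{p\mid m}r_{\ell_p}\,(m/p^{\beta_p})\,y_p$, where $y_p$ is an inverse of $m/p^{\beta_p}$ modulo $p^{\beta_p}$, together with the divisibility $(m/m_{\ell_p})\mid(m/p^{\beta_p})$, which holds valuation by valuation since $v_q(m/m_{\ell_p})=\beta_q-v_q(m_{\ell_p})\leqslant\beta_q=v_q(m/p^{\beta_p})$ for $q\neq p$ while both sides have $p$-valuation $0$. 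Writing $m/p^{\beta_p}=(m/m_{\ell_p})\,k_p$ with $k_p\in\mathbb{Z}$ and regrouping the summands by the value of the index $\ell_p$ then exhibits $x_0=\sum_i c_i\,r_i(m/m_i)$ with $c_i=\sum_{p:\,\ell_p=i}k_p y_p\in\mathbb{Z}$. If one prefers an induction on $\ell$ instead, the same conclusion comes out by merging the congruences two at a time, the two-modulus case being settled by a B\'{e}zout identity $a m_i+b m_j=d_{i,j}$, which rewrites the combined solution as $b\,r_i(m/m_i)+a\,r_j(m/m_j)$; but then one must also call on $\gcd(\mathrm{lcm}(a,b),c)=\mathrm{lcm}(\gcd(a,c),\gcd(b,c))$ to keep the solvability hypothesis available at each merge, so the localization route looks cleaner.
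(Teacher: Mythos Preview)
Your argument is correct, but the paper proceeds quite differently and more directly. Rather than invoking the prime-localization construction of Proposition~\ref{crt1} and then working backwards to exhibit the linear-combination form, the paper starts from the generalized B\'{e}zout identity: since $\gcd_{i<\ell}(m/m_i)=1$ (indeed, for every prime $p\mid m$ some $m_i$ has $v_p(m_i)=v_p(m)$, so $v_p(m/m_i)=0$), there are integers $c_i$ with $\sum_{i<\ell}c_i(m/m_i)=1$. The paper then \emph{defines} $x_0=\sum_{i<\ell}r_i\,c_i\,(m/m_i)$ and verifies $x_0\equiv_{m_j}r_j$ by a short direct computation, using that $(r_i-r_j)(m/m_i)$ is a multiple of $m_j$ under the hypothesis $r_i\equiv_{d_{i,j}}r_j$. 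Uniqueness modulo $m$ is handled as you do.

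The practical difference matters for the sequel. The paper's coefficients come with the normalization $\sum_i c_i(m/m_i)=1$, and this is precisely what the proofs of Lemma~\ref{lem-rn} and Lemma~\ref{lem-rm} invoke when they write $a=\prod_i u_i^{-c_i n/n_i}$ with $\sum_i c_i(n/n_i)=1$ and appeal to ``the proof of Proposition~\ref{th-crt}''. Your regrouped coefficients $c_i=\sum_{p:\,\ell_p=i}k_p y_p$ need not satisfy that identity, so while your route proves the proposition as stated, it does not directly supply the specific constructive datum the downstream lemmas rely on. The paper's B\'{e}zout-first approach buys both the existence proof and the exact form of $x_0$ in one stroke; your localization route is conceptually cleaner for mere solvability but requires the extra divisibility-and-regrouping step, and then still leaves that normalization to be recovered separately.
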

\begin{proof}
Suppose  $\bigwedge\hspace{-1.85ex}\bigwedge_{i\neq j}r_i\equiv_{d_{i,j}}r_j$; we will
show the existence of an integer  $x_0$ which satisfies $\bigwedge\hspace{-1.85ex}\bigwedge_{i<\ell} x_0\equiv_{m_i}r_i$.
Then of course every number $x=L\!\cdot\!m+x_0$ satisfies the system of equations
 $\bigwedge\hspace{-1.85ex}\bigwedge_{i<\ell} x\equiv_{m_i}r_i$  as well, and every solution $y$ of the equations
  $\bigwedge\hspace{-1.85ex}\bigwedge_{i<\ell} y\equiv_{m_i}r_i$ satisfies $\bigwedge\hspace{-1.85ex}\bigwedge_{i<\ell} x_0\equiv_{m_i}y$, and so
  $x_0\equiv_my$, therefore $y=L\!\cdot\!m+x_0$ for some $L\in\mathbb{N}$.
Since the greatest common divisor of the numbers $\{m/m_i\}_{i<\ell}$ is $1$, by generalized B\'{e}zout's identity,
 there are   $\{c_i\}_{i<\ell}$ such that the identity $\sum_{i<\ell}^{}c_i\!\cdot\!(m/m_i)=1$ holds.
 Let $e_{i,j}$ denote the least common multiplier of $m_i$ and $m_j$. For any $i\neq j$ the number $d_{i,j}$ divides $r_i-r_j$ and the number $e_{i,j}$ divides $m$; so there are $p_{i,j}$ and $q_{i,j}$ such that $r_i-r_j=p_{i,j}\!\cdot\!d_{i,j}$ and $m=q_{i,j}\!\cdot\!e_{i,j}$. By
 $d_{i,j}\!\cdot\!e_{i,j}=m_i\!\cdot\!m_j$ we have
 $(r_i-r_j)m/m_i=p_{i,j}\!\cdot\!d_{i,j}\!\cdot\!q_{i,j}
 \!\cdot\!e_{i,j}/m_i=
p_{i,j}\!\cdot\!q_{i,j}\!\cdot\!m_j$.
Put $x_0=\sum_{i<\ell}r_i\!\cdot\!c_i\!\cdot\!(m/m_i)$. Then
\begin{center}
\begin{tabular}{lll}
$x_0$ & $=$ & $r_jc_jm/m_j+\sum_{i\not=j}r_i\!\cdot\!c_i\!\cdot\!(m/m_i)$ \\
 & $=$ & $r_jc_jm/m_j+\sum_{i\not=j}(r_i-r_j)\!\cdot\!c_i\!\cdot\!(m/m_i)
 +\sum_{i\not=j}r_j\!\cdot\!c_i\!\cdot\!(m/m_i)$ \\
 & $=$ & $r_j\!\cdot\!\sum_ic_i\!\cdot\!(m/m_i)+
 \sum_{i\not=j}(r_i-r_j)\!\cdot\!c_i\!\cdot\!(m/m_i)$\\
 & $=$ & $r_j+\sum_{i\not=j}c_i\!\cdot\!(r_i-r_j)\!\cdot\!(m/m_i)$ \\
  & $=$ & $r_j+\sum_{i\not=j}c_i\!\cdot\!p_{i,j}\!\cdot\!q_{i,j}\!\cdot\!m_j$ \\
    & $=$ & $r_j+m_j\!\cdot\!\sum_{i\not=j}c_i\!\cdot\!p_{i,j}\!\cdot\!q_{i,j},$ \\
         \end{tabular}
\end{center}
which implies the desired conclusion $x_0\equiv_{m_j}r_j$ (for every $j<\ell$).
\end{proof}

The language $\{\times\}$ does not
 allow quantifier elimination for $\langle\mathbb{Q}^+;\times\rangle$, since e.g.
 the formula $\exists x(y=x^2)$ is not equivalent with  a quantifier--free formula. So,
 we introduce the following:
\begin{definition}[The Property of Having the $\mathbf{n}^{\bf th}$ Root]
For any $n\geqslant 2$ let $\Re_n$ be the property of ``being the $n^{\rm th}$ power of a rational number''. In  the other words
  $\Re_n(x)\equiv \exists y[\in\mathbb{Q}]\big(x=y^n\big)$.
\hfill $\oplus\hspace{-1.725ex}\otimes$
\end{definition}

\begin{lemma}[The First Quantifier Elimination for $\boldsymbol\Re$]\label{lem-rn}
The system of relations $\{\Re_{n_i}(u_i \cdot x)\}_{i<\ell}$   has a solution
in $\mathbb{Q}^+$ if and only if $\Re_{d_{i,j}}(u_i\cdot u_j^{-1})$ holds for every $i\not=j$,
 where $d_{i,j}$ is the
greatest common divisor of $n_i$ and $n_j$. Moreover, if
$\bigwedge\hspace{-1.85ex}\bigwedge_{i\not=j}\Re_{d_{i,j}}(u_i\cdot u_j^{-1})$ holds  then
for  $n$ the least common multiplier of $\{n_i\}_{i<\ell}$ and for some fixed $\{c_i\}_{i<\ell}\subseteq\mathbb{Z}$ which satisfy the equality
$\sum_{i<\ell}c_i(n/n_i)=1$,   all of the solutions  are of the form
$w^n\prod_{i<\ell}(u_i)^{-c_i\cdot n/n_i}$ for some (arbitrary)
$w\in\mathbb{Q}^+$.
\end{lemma}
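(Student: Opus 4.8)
The plan is to reduce this multiplicative statement about $\mathbb{Q}^+$ to the additive Chinese‑remainder statement of Proposition~\ref{th-crt} via the prime‑exponent isomorphism. Recall that the map sending $\prod_i \mathfrak{p}_i^{a_i}$ to the finitely supported sequence $(a_i)_i$ is an isomorphism of $\langle\mathbb{Q}^+;\times,\circ^{-1},{\bf 1}\rangle$ onto the (restricted) direct sum $\bigoplus_{i\in\mathbb{N}}\mathbb{Z}$; and under this identification, for $m\geqslant 2$, $\Re_m(z)$ holds if and only if \emph{every} coordinate of $z$ is divisible by $m$, i.e. $z$'s exponent vector is $\equiv_m \mathbf{0}$ coordinatewise. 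So if $x$ has exponent vector $(\xi_i)_i$ and $u_i$ has exponent vector $(u_{i,0},u_{i,1},\ldots)$, then $\Re_{n_i}(u_i\cdot x)$ says $\xi_\nu \equiv_{n_i} -u_{i,\nu}$ for every coordinate $\nu$. Thus the system $\{\Re_{n_i}(u_i\cdot x)\}_{i<\ell}$ is solvable in $\mathbb{Q}^+$ iff for each coordinate $\nu$ the congruence system $\{\xi\equiv_{n_i} -u_{i,\nu}\}_{i<\ell}$ is solvable in $\mathbb{Z}$.

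Now I apply Proposition~\ref{th-crt} coordinatewise. That proposition tells us the system $\{\xi\equiv_{n_i} -u_{i,\nu}\}_{i<\ell}$ is solvable iff $-u_{i,\nu}\equiv_{d_{i,j}} -u_{j,\nu}$ for all $i\neq j$, equivalently $u_{i,\nu}-u_{j,\nu}\equiv_{d_{i,j}} 0$. Running over all coordinates $\nu$, this is precisely the statement that the exponent vector of $u_i\cdot u_j^{-1}$ is $\equiv_{d_{i,j}}\mathbf{0}$ in every coordinate, i.e. $\Re_{d_{i,j}}(u_i\cdot u_j^{-1})$. This establishes the stated equivalence. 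For the ``moreover'' part, I again read off Proposition~\ref{th-crt}: with $n$ the least common multiple of the $n_i$'s and $\{c_i\}_{i<\ell}\subseteq\mathbb{Z}$ chosen once and for all so that $\sum_{i<\ell}c_i(n/n_i)=1$ (possible since $\gcd\{n/n_i\}=1$), the particular solution in coordinate $\nu$ may be taken to be $x_{0,\nu}=\sum_{i<\ell}(-u_{i,\nu})c_i(n/n_i)$ and the general solution is $L_\nu\cdot n + x_{0,\nu}$ for arbitrary $L_\nu\in\mathbb{Z}$. Translating back through the isomorphism, the general solution $x$ has exponent vector $(L_\nu n + x_{0,\nu})_\nu$, which is exactly the exponent vector of $w^n\prod_{i<\ell}(u_i)^{-c_i\cdot n/n_i}$ where $w$ is the element with exponent vector $(L_\nu)_\nu$; as the $L_\nu$ range over all finitely supported integer sequences, $w$ ranges over all of $\mathbb{Q}^+$. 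Hence every solution has the claimed form, and conversely each such $w$ yields a solution.

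The only genuinely delicate point is bookkeeping about finiteness of support: I should check that when the right‑hand side holds, the solution can be taken to lie in $\mathbb{Q}^+$ rather than in an infinite product — but this is automatic, since each $u_i$ has finite support, so $x_{0,\nu}=0$ for all but finitely many $\nu$, and we are free to choose $L_\nu=0$ outside that finite set, so the constructed $w$ and hence $x$ are honest positive rationals. I expect this support argument to be the main (minor) obstacle; the rest is a transparent coordinatewise application of Proposition~\ref{th-crt}.
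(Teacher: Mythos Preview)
Your proposal is correct and follows essentially the same approach as the paper: both reduce the statement to the General Chinese Remainder Theorem (Proposition~\ref{th-crt}) by passing to prime exponents, with the paper arguing one prime at a time while you phrase the same reduction via the isomorphism $\mathbb{Q}^+\cong\bigoplus_\nu\mathbb{Z}$. Your explicit handling of the finite-support issue is a point the paper leaves implicit, but otherwise the arguments coincide.
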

\begin{proof}
Clearly, if $\Re_{n_i}(u_i x)$ and  $\Re_{n_j}(u_j x)$ hold
then $\Re_{d_{i,j}}(u_i x)$ and
$\Re_{d_{i,j}}(u_j^{-1} x^{-1})$, and so
$\Re_{d_{i,j}}(u_i u_j^{-1})$ holds. Conversely, suppose that
$\bigwedge\hspace{-1.85ex}\bigwedge_{i\neq j}\Re_{d_{i,j}}(u_i u_j^{-1})$ holds.    Since the greatest common divisor of
$n/n_i$'s is $1$ there are some $\{c_i\}_{i<\ell}$ such that
$\sum_{i<\ell}c_i (n/n_i)=1$. We show that
$x_0=\prod_{i<\ell}(u_i)^{-c_i n/n_i}$ satisfies
$\bigwedge\hspace{-1.85ex}\bigwedge_{i<\ell}\Re_{n_i}(u_i x_0)$. For  a fix prime ${p}$,  assume the
exponents of ${p}$ in the unique factorizations of
$\{u_i\}_{i<\ell}$  are respectively $\{\alpha_i\}_{i<\ell}$. Then
the exponent of ${p}$ in the unique factorization of $x_0$ will
be $\alpha=\sum_{i<\ell}-c_i\alpha_i(n/n_i)$. Also, by the assumption
$\bigwedge\hspace{-1.85ex}\bigwedge_{i\neq j}\Re_{d_{i,j}}(u_i u_j^{-1})$ we have
$\bigwedge\hspace{-1.85ex}\bigwedge_{i\neq j}\alpha_i\equiv_{d_{i,j}}\alpha_j$. So, by the
proof of the General Chinese Remainder Theorem (Proposition~\ref{th-crt}),
$\bigwedge\hspace{-1.85ex}\bigwedge_i\alpha\equiv_{n_i}-\alpha_i$. This means that the exponent of
(every prime) in the unique factorization of $u_i  x_0$ is a
multiple of $n_i$, whence $\Re_{n_i}(u_i x_0)$ holds (for
each $i<\ell$). Now assume for $y\in\mathbb{Q}^+$ the relation
$\bigwedge\hspace{-1.85ex}\bigwedge_i\Re_{n_i}(u_i y)$ holds. Then for any prime ${p}$, if the exponent of ${p}$ in the unique factorization of $y$
is $\beta$, we have $\bigwedge\hspace{-1.85ex}\bigwedge_i\beta\equiv_{n_i}-\alpha_i$. Whence,
by the  proof of Proposition~\ref{th-crt} we have
$\beta\equiv_n\alpha$, and so
  $y=w^n x_0$   for some $w\in\mathbb{Q}^+$.
\end{proof}

\noindent
Let us note that Lemma~\ref{lem-rn} is a kind of quantifier elimination:
$$\exists x\big[ \bigwedge\hspace{-2.4ex}\bigwedge_{i}\Re_{n_i}(x u_i)\big] \iff \bigwedge\hspace{-2.5ex}\bigwedge_{i\neq j}\Re_{d_{i,j}}(u_i u_j^{-1});$$
so is the next lemma in which we show that
$$\exists x\big[ \bigwedge\hspace{-2.4ex}\bigwedge_{j}\Re_{n_j}(x u_j) \wedge \bigwedge\hspace{-2.4ex}\bigwedge_{k}\!\neg\Re_{m_k}(xv_k)\big] \iff \bigwedge\hspace{-2.5ex}\bigwedge_{i\neq j}\Re_{d_{i,j}}(u_i u_j^{-1}) \wedge \bigwedge\hspace{-4.1ex}\bigwedge_{k:\; m_k \mid
n}\!\!\!\!\!\neg\Re_{m_k}(av_k),$$
where $d_{i,j}$ is the greatest common divisor of $n_i$ and $n_j$, $n$ is the least common multiplier of $\{n_i\}$'s and
$a=\prod_{i<\ell}(u_i)^{-c_in/n_i}$ in which $\sum_{i<\ell}c_in/n_i=1$.

\begin{lemma}[The Second Quantifier Elimination for $\boldsymbol\Re$]\label{lem-rm}
The system of relations $\{\Re_{n_j}(xu_j)\}_{j<\ell},\{\neg
\Re_{m_k}(xv_k)\}_{k<l}$  has a solution (for $x$) in
$\mathbb{Q}^+$ if and only if
for any $i\neq j$ we have $\Re_{d_{i,j}}(u_i u_j^{-1})$ and for any $k$ such that $m_k$ divides $n$ we have $\neg\Re_{m_k}(av_k)$
where $d_{i,j}$ is the greatest common divisor of $n_i$ and $n_j$, $n$ is the least common multiplier of all the $\{n_i\}$'s,
$a=\prod_{i<\ell}(u_i)^{-c_in/n_i}$ and  $\sum_{i<\ell}c_in/n_i=1$.
\end{lemma}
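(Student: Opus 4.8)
The plan is to reduce everything to Lemma~\ref{lem-rn} together with one careful choice of a ``fresh'' prime. Recall first what Lemma~\ref{lem-rn} supplies: the positive part $\bigwedge_{j<\ell}\Re_{n_j}(xu_j)$ is solvable in $\mathbb{Q}^+$ exactly when $\bigwedge_{i\neq j}\Re_{d_{i,j}}(u_iu_j^{-1})$ holds, and in that case its solution set is precisely $\{w^n a : w\in\mathbb{Q}^+\}$, where $a=\prod_{i<\ell}(u_i)^{-c_in/n_i}$ and $\sum_{i<\ell}c_in/n_i=1$. So throughout we may assume $\bigwedge_{i\neq j}\Re_{d_{i,j}}(u_iu_j^{-1})$ holds --- otherwise both sides of the claimed equivalence fail --- and work only with candidates of the shape $x=w^na$.

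For the ``only if'' direction, suppose $x=w^na$ solves the whole system. The positive conjuncts already yield $\bigwedge_{i\neq j}\Re_{d_{i,j}}(u_iu_j^{-1})$ by the forward part of Lemma~\ref{lem-rn}. Now fix $k$ with $m_k\mid n$. Then $w^n=(w^{n/m_k})^{m_k}$ is an $m_k$-th power in $\mathbb{Q}^+$; were $av_k$ also an $m_k$-th power, then so would be $xv_k=w^n(av_k)$, contradicting $\neg\Re_{m_k}(xv_k)$. Hence $\neg\Re_{m_k}(av_k)$, which is exactly what the right-hand side asserts.

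For the ``if'' direction, assume $\bigwedge_{i\neq j}\Re_{d_{i,j}}(u_iu_j^{-1})$ and $\neg\Re_{m_k}(av_k)$ for every $k$ with $m_k\mid n$; I must exhibit a single $w\in\mathbb{Q}^+$ for which $x=w^na$ falsifies $\Re_{m_k}(xv_k)$ for all $k<l$ simultaneously. Split the negative indices into $K_0=\{k<l : m_k\mid n\}$ and $K_1=\{k<l : m_k\nmid n\}$. For $k\in K_0$ the factor $w^n$ is always an $m_k$-th power, so $\Re_{m_k}(w^nav_k)\Leftrightarrow\Re_{m_k}(av_k)$, which is false by hypothesis, \emph{independently of $w$}. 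For $K_1$, choose a prime $p$ occurring in the factorization of none of the finitely many rationals $u_0,\dots,u_{\ell-1},v_0,\dots,v_{l-1}$, and put $w=p$. Then $p$ does not occur in $a$, so for each $k\in K_1$ the exponent of $p$ in the factorization of $xv_k=p^n\cdot a\cdot v_k$ is exactly $n$, and since $m_k\nmid n$ this exponent is not a multiple of $m_k$; hence $\neg\Re_{m_k}(xv_k)$. Finally $x=p^na$ is of the form $w^na$, so by the ``moreover'' part of Lemma~\ref{lem-rn} it also satisfies $\bigwedge_{j<\ell}\Re_{n_j}(xu_j)$, completing the construction.

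The step needing the most care is making the choice of $w$ uniform over all of $K_1$: one must notice that a single fresh prime $p$ pins the $p$-adic exponent of each $xv_k$ to the common value $n$, which is at once a non-multiple of every $m_k$ with $m_k\nmid n$, and one must separately observe that the conjuncts indexed by $K_0$ are discharged automatically by the hypothesis, regardless of $w$. Everything else is routine bookkeeping with prime factorizations and an appeal to the already-proved Lemma~\ref{lem-rn}; the degenerate case $\ell=0$ (so $n=1$, $a=1$, $K_0=\varnothing$) is covered by the same argument, since then $x=p$ works.
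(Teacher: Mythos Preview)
Your proof is correct and follows essentially the same approach as the paper's: reduce to Lemma~\ref{lem-rn} to parametrize all solutions of the positive part as $x=w^na$, and then take $w$ to be a fresh prime not occurring in any $u_j$ or $v_k$, handling the two cases $m_k\mid n$ and $m_k\nmid n$ separately exactly as you do. Your explicit treatment of the degenerate case $\ell=0$ is a small addition not spelled out in the paper.
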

\begin{proof}
Suppose that $x\in\mathbb{Q}^+$ satisfies  the system
$\{\Re_{n_j}(xu_j)\}_{j<\ell},\{\neg \Re_{m_k}(xv_k)\}_{k<l}$. Then by Lemma~\ref{lem-rn},
 $\bigwedge\hspace{-1.85ex}\bigwedge_{i\neq j}
 \Re_{d_{i,j}}(u_iu_j^{-1})$ holds, and moreover
 $x$ is of the form $w^n a$ for some $w\in\mathbb{Q}^+$. We show that
  $\bigwedge\hspace{-1.85ex}\bigwedge_{k:\;m_k\mid n}\!\!\neg \Re_{m_k}(av_k)$ holds too. Suppose
   $m_k\mid  n$. Then $v_k x=v_k w^n a$, and so by $\Re_{m_k}(w^n)$
    and $\neg\Re_{m_k}(v_k x)$ we have  that $\neg \Re_{m_k}(av_k)$.
    Conversely,
    suppose that
    \newline\centerline{$\bigwedge\hspace{-1.85ex}\bigwedge_{i\neq j}\Re_{d_{i,j}}(u_i u_j^{-1})
    \wedge \bigwedge\hspace{-1.85ex}\bigwedge_{k:\;m_k\mid n}\!\!\neg \Re_{m_k}(av_k)$.}
    Then by Lemma~\ref{lem-rn} for any $w\in\mathbb{Q}^+$ the number
    $x=aw^n$ satisfies $\bigwedge\hspace{-1.85ex}\bigwedge_{j<\ell}{\Re_{n_j}(xu_j)}$.
    We choose a suitable $w$ for which $x=aw^n$ also satisfies
    $\bigwedge\hspace{-1.85ex}\bigwedge_{k<l}\neg \Re_{m_k}(xv_k)$. Choose $\mathfrak{p}$ be a (sufficiently large) prime
    number which does not appear in the  (unique) factorization  of any of
     $\{u_j\}_{j<\ell}$ or $\{v_k\}_{k<l}$. Now
     we show that $x=a\mathfrak{p}^n$ satisfies
     $\bigwedge\hspace{-1.85ex}\bigwedge_{k<l}\neg \Re_{m_k}(xv_k)$:

  (i) If $m_k$ divides $n$ then $\neg\Re_{m_k}(av_k)$
and $\Re_{m_k}(\mathfrak{p}^n)$; whence $\neg \Re_{m_k}(xv_k)$.

  (ii) If $m_k$ does not divide $n$, then $\neg\Re_{m_k}(\mathfrak{p}^n)$ and
 so $\neg\Re_{m_k}(xv_k)$, because the prime number $\mathfrak{p}$ does not appear in
 the unique factorization of $a$ or $v_k$
 (if we had $\Re_{m_k}(xv_k)\equiv\Re_{m_k}(a\mathfrak{p}^nv_k)$
 then we must have had $\Re_{m_k}(\mathfrak{p}^n)$ or $m_k\mid n$, a contradiction).
\end{proof}
\begin{corollary}[The Third Quantifier Elimination for $\boldsymbol\Re$]\label{cor-qe}
For any finite sequences $\{t_\iota\}_\iota,\{u_j\}_j,\{v_k\}_k\subseteq\mathbb{Q}^+$ we have
$$\exists x\!\in\!\mathbb{Q}^+\big[ \bigwedge\hspace{-2.5ex}\bigwedge_{\iota} x\neq t_\iota \wedge \bigwedge\hspace{-2.5ex}\bigwedge_{j}\Re_{n_j}(x u_j) \wedge \bigwedge\hspace{-2.5ex}\bigwedge_{k}\!\neg\Re_{m_k}(xv_k)\big] \iff \bigwedge\hspace{-2.55ex}\bigwedge_{i\neq j}\Re_{d_{i,j}}(u_i u_j^{-1}) \wedge \bigwedge\hspace{-4.15ex}\bigwedge_{k:\; m_k \mid
n}\!\!\!\!\!\neg\Re_{m_k}(av_k),$$
where $d_{i,j}$ is the greatest common divisor of $n_i$ and $n_j$, $n$ is the least common multiplier of $\{n_i\}$'s and
$a=\prod_{i}(u_i)^{-c_in/n_i}$ in which $\sum_{i}c_in/n_i=1$.
\end{corollary}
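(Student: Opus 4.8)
The plan is to derive Corollary~\ref{cor-qe} from Lemma~\ref{lem-rm} by absorbing the finitely many disequations $x\neq t_\iota$ into the existential statement without affecting the right-hand side. The key observation is that Lemma~\ref{lem-rm} already characterizes exactly when the system $\{\Re_{n_j}(xu_j)\}_j,\{\neg\Re_{m_k}(xv_k)\}_k$ has a solution in $\mathbb{Q}^+$, and moreover its proof produces a whole family of solutions. So the only thing to check is that, whenever that system is solvable, it has a solution distinct from each of the finitely many forbidden values $t_\iota$.

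First I would handle the trivial direction: if the left-hand side of the biconditional holds, then in particular $x\in\mathbb{Q}^+$ satisfies $\bigwedge_j\Re_{n_j}(xu_j)\wedge\bigwedge_k\neg\Re_{m_k}(xv_k)$, so by Lemma~\ref{lem-rm} the right-hand side $\bigwedge_{i\neq j}\Re_{d_{i,j}}(u_iu_j^{-1})\wedge\bigwedge_{k:\,m_k\mid n}\neg\Re_{m_k}(av_k)$ holds. For the converse, assume the right-hand side. By Lemma~\ref{lem-rm} the system without the disequations is solvable, and inspecting its proof, for \emph{every} $w\in\mathbb{Q}^+$ the element $x=aw^n$ satisfies $\bigwedge_j\Re_{n_j}(xu_j)$, while a sufficiently large prime $\mathfrak{p}$ chosen outside the factorizations of all the $u_j$'s and $v_k$'s gives $x=a\mathfrak{p}^n$ satisfying $\bigwedge_k\neg\Re_{m_k}(xv_k)$ as well. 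The point is that we have infinitely many such witnesses: taking $w=\mathfrak{p}^N$ for arbitrarily large primes $\mathfrak{p}$ (each avoiding the finitely many factorizations) produces infinitely many pairwise distinct elements $x=a\mathfrak{p}^n$, all of which satisfy the full root/non-root system. Since there are only finitely many $t_\iota$, at least one such witness differs from all of them, which establishes the left-hand side.

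The only mild subtlety — and the step I would be most careful about — is verifying that the family of witnesses really is infinite and that each member genuinely satisfies the non-root conditions; this is where one must re-use the argument in case~(ii) of the proof of Lemma~\ref{lem-rm}, namely that if $m_k\nmid n$ then $\neg\Re_{m_k}(\mathfrak{p}^n)$ forces $\neg\Re_{m_k}(a\mathfrak{p}^nv_k)$ precisely because $\mathfrak{p}$ is absent from the factorizations of $a$ and $v_k$. Once that is in place, the disequations $x\neq t_\iota$ impose no new constraint, so they drop out of the right-hand side entirely, and the corollary follows. I expect no real obstacle here; the whole content has already been done in Lemmas~\ref{lem-rn} and~\ref{lem-rm}, and this corollary is essentially a packaging step that also records the explicit form $a=\prod_i (u_i)^{-c_in/n_i}$ of the base witness for later use.
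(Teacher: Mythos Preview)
Your proposal is correct and follows essentially the same approach as the paper: the paper's proof is the one-line observation that in the proof of Lemma~\ref{lem-rm} there are infinitely many primes $\mathfrak{p}$ avoiding the factorizations of the $u_j$'s and $v_k$'s, hence infinitely many witnesses $x=a\mathfrak{p}^n$, so the finitely many disequations $x\neq t_\iota$ can always be satisfied. Your write-up spells out exactly this (with a harmless notational slip where you write $w=\mathfrak{p}^N$ but then $x=a\mathfrak{p}^n$; you mean $w=\mathfrak{p}$).
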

\begin{proof}
It suffices to note that in the proof of Lemma~\ref{lem-rm} there are infinitely many prime
    numbers which do  not appear in the  factorization  of any of
     $\{u_j\}_j$ or $\{v_k\}_k$.
\end{proof}

Now, we have all the necessary tools for proving our desired  quantifier elimination theorem.

\begin{theorem}[The Quantifier Elimination of  $\langle\mathbb{Q}^+;\times,{\bf 1},\circ^{-1},\{\Re_n\}_{n\geqslant 2}\rangle$]\label{th-q1}
The theory of the structure $\langle\mathbb{Q}^+;\times,{\bf 1},\circ^{-1},\Re_2,\Re_3,\cdots\rangle$ admits quantifier elimination.
\end{theorem}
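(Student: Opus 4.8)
The plan is to apply the Main Lemma of Quantifier Elimination (Lemma~\ref{mainlem}), so it suffices to eliminate the existential quantifier from a formula $\exists x\,(\bigwedge_i \alpha_i)$ where each $\alpha_i$ is a literal in the language $\{\times,{\bf 1},\circ^{-1},\{\Re_n\}_{n\geqslant 2}\}$. First I would normalize the atomic subformulas: using the abelian group axioms (associativity, commutativity, identity, inverse --- which hold in $\langle\mathbb{Q}^+;\times\rangle$), every term is a product of integer powers of variables, so every atomic formula involving $x$ is equivalent to one of the forms $x^k = t$, $\Re_n(x^k\cdot t)$, or (after moving things across) a relation not mentioning $x$ at all; here $t$ is an $x$-free term and $k$ a nonzero integer, which by replacing $x$ with $x^{-1}$ if $k<0$ we may take positive. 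The equations $x^k=t$ are the easy case: an equation $x^k=t$ together with the rest lets us (as in the proof of Theorem~\ref{thm-c}) reduce pairs of equations and ultimately either solve for $x$ outright or be left with a single equation; moreover a single equation $x^k=t$ can be handled by noting $\exists x\,(x^k=t \wedge \cdots)$ is equivalent to $\Re_k(t) \wedge (\text{substitute the solutions})$ --- but more cleanly, I would separate the argument into the case where at least one positive-power equation $x^k=t$ is present (then $x$ is determined up to a $k$-th root of unity, and since $\langle\mathbb{Q}^+;\times\rangle$ is torsion-free the $k$-th root, if it exists, is \emph{unique}, so the quantifier is removed by substitution) and the case where no such equation occurs.

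In the remaining case the formula has the shape
\[
\exists x\,\Big(\bigwedge_\iota x\neq t_\iota \;\wedge\; \bigwedge_j \Re_{n_j}(x^{a_j}\cdot u_j) \;\wedge\; \bigwedge_k \neg\Re_{m_k}(x^{b_k}\cdot v_k)\Big),
\]
with $t_\iota,u_j,v_k$ all $x$-free and $a_j,b_k$ positive integers. The next step is to clear the exponents $a_j$ and $b_k$ on $x$ inside the $\Re$-predicates: since $\Re_n(y^a\cdot w)$ is related to $\Re_{na}$-type conditions, one uses the identity that $\Re_n(y^a w)$ holds iff $\Re_{n}$ of the appropriate power behaves correctly --- concretely, I would invoke the fact (provable from the characterization of $\Re$ via exponents in prime factorizations, exactly as used in Lemma~\ref{lem-rn}) that $\Re_{n}(x^{a}u)$ can be rewritten, after passing to a common framework, so that $x$ appears only to the first power; this is the standard ``multiply $n$ by $a$'' trick that already appeared in the additive case (Proposition~\ref{thm-z}, the step ``$a\equiv_m b \leftrightarrow na\equiv_{nm} nb$'') and its multiplicative analogue. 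After that reduction the formula is precisely of the form to which Corollary~\ref{cor-qe} (the Third Quantifier Elimination for $\Re$) applies, and that corollary delivers a quantifier-free equivalent. One also needs to dispose of the degenerate subcases --- $\ell=0$ (no $\Re$-conditions at all, so the formula is $\exists x\,\bigwedge_\iota x\neq t_\iota$, which is simply true since $\mathbb{Q}^+$ is infinite, hence equivalent to ${\bf 1}={\bf 1}$) and the situation where some $u_j$ or $v_k$ is ${\bf 1}$ or the conditions are trivial --- all of which are routine.

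The main obstacle I anticipate is the bookkeeping in clearing the exponents $a_j,b_k$ on $x$ inside the radical predicates and showing that this really reduces to the hypotheses of Corollary~\ref{cor-qe} in the stated form (with the specific $a=\prod_i u_i^{-c_i n/n_i}$); one must be careful that replacing $x$ by $x^{-1}$ to make exponents positive interacts correctly with all the $\Re$-predicates simultaneously, and that the case analysis (equation present vs.\ absent, and torsion-freeness guaranteeing uniqueness of roots) is exhaustive. The genuinely substantive content, however, has already been isolated into Lemmas~\ref{lem-rn}, \ref{lem-rm} and Corollary~\ref{cor-qe}, so once the normalization is done correctly the theorem follows; the proof is essentially an orchestration of these pieces together with the Main Lemma.
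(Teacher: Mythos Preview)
Your proposal follows the same architecture as the paper's proof --- normalize literals, clear exponents, then invoke Corollary~\ref{cor-qe} --- and the substantive content is correctly located in Lemmas~\ref{lem-rn},~\ref{lem-rm} and Corollary~\ref{cor-qe}. The one place your ordering creates a small gap is the ``equation present'' case: you split on whether an equation $x^k=t$ occurs \emph{before} equalizing exponents, and then say the quantifier is removed ``by substitution.'' But there is no term for $\sqrt[k]{t}$ in the language, so you cannot literally substitute the unique root into the remaining $\Re$-literals $\Re_{n_j}(x^{a_j}u_j)$ when $k\nmid a_j$.

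The paper sidesteps this by performing the exponent-equalization step \emph{first and uniformly}: using $a=b\leftrightarrow a^\eta=b^\eta$ and $\Re_\ell(a)\leftrightarrow\Re_{\ell\eta}(a^\eta)$ it makes every exponent on $x$ (in equations, inequations, and $\Re$-literals alike) equal to a single $q$, and only then substitutes $y=x^q$, which introduces the extra literal $\Re_q(y)$. After that change of variable every equation reads $y=s_h$ and substitution is literal; the no-equation case is exactly Corollary~\ref{cor-qe}. Your exponent-clearing trick --- which you apply only in the no-equation branch --- is precisely this move; pulling it forward, ahead of the case split, closes the gap and recovers the paper's argument verbatim.
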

\begin{proof}
By Lemma~\ref{mainlem} it suffices to eliminate the quantifier of the formula
$$\exists x \big[\bigwedge\hspace{-2.5ex}\bigwedge_h x^{\alpha_h}=s_h \wedge
     \bigwedge\hspace{-2.5ex}\bigwedge_i x^{\beta_i}\not=t_i
  \wedge\bigwedge\hspace{-2.5ex}\bigwedge_j \Re_{n_j}(u_j\cdot x^{\gamma_j})
  \wedge\bigwedge\hspace{-2.5ex}\bigwedge_k \neg \Re_{m_k}(v_k\cdot x^{\delta_k}) \big].$$
By the equivalences $a=b \leftrightarrow a^\eta = b^\eta$ and
$\Re_\ell(a) \leftrightarrow \Re_{\ell\eta}(a^\eta)$ we can assume that all the exponents $\alpha_h$'s, $\beta_i$'s, $\gamma_j$'s and $\delta_k$'s are equal, to say $q$. So, we are to eliminate the quantifier of
  $$\exists x \big[\bigwedge\hspace{-2.5ex}\bigwedge_h x^{q}=s_h \wedge
     \bigwedge\hspace{-2.5ex}\bigwedge_i x^{q}\not=t_i
  \wedge\bigwedge\hspace{-2.5ex}\bigwedge_j \Re_{n_j}(u_j\cdot x^{q})
  \wedge\bigwedge\hspace{-2.5ex}\bigwedge_k \neg \Re_{m_k}(v_k\cdot x^{q}) \big]$$
  which is equivalent (for $y=x^q$) with the formula
$$\exists y \big[\Re_q(y)\wedge \bigwedge\hspace{-2.5ex}\bigwedge_h y=s_h \wedge
     \bigwedge\hspace{-2.5ex}\bigwedge_i y\not=t_i
  \wedge\bigwedge\hspace{-2.5ex}\bigwedge_j \Re_{n_j}(u_j  y)
  \wedge\bigwedge\hspace{-2.5ex}\bigwedge_k \neg \Re_{m_k}(v_k y) \big].$$
Thus, it suffices to prove the equivalence of the formulas of the form
\begin{equation}\label{last-f}
\exists x \big[\bigwedge\hspace{-2.5ex}\bigwedge_h x=s_h \wedge
     \bigwedge\hspace{-2.5ex}\bigwedge_i x\not=t_i
  \wedge\bigwedge\hspace{-2.5ex}\bigwedge_j \Re_{n_j}(u_j  x)
  \wedge\bigwedge\hspace{-2.5ex}\bigwedge_k \neg \Re_{m_k}(v_k x) \big]
\end{equation}
with a quantifier-free formula.  If the conjunction $\bigwedge\hspace{-1.85ex}\bigwedge_h x=s_h$ is nonempty then the formula~\eqref{last-f} is equivalent with the quantifier-free formula
$$\bigwedge\hspace{-2.5ex}\bigwedge_h s_0=s_h \wedge
     \bigwedge\hspace{-2.5ex}\bigwedge_i s_0\not=t_i
  \wedge\bigwedge\hspace{-2.5ex}\bigwedge_j \Re_{n_j}(u_js_0)
  \wedge\bigwedge\hspace{-2.5ex}\bigwedge_k \neg \Re_{m_k}(v_k s_0)$$
and otherwise the formula~\eqref{last-f} is actually
$$\exists x \big[     \bigwedge\hspace{-2.5ex}\bigwedge_i x\not=t_i
  \wedge\bigwedge\hspace{-2.5ex}\bigwedge_j \Re_{n_j}(u_j  x)
  \wedge\bigwedge\hspace{-2.5ex}\bigwedge_k \neg \Re_{m_k}(v_k x) \big]$$
which is equivalent with a quantifier-free formula by Corollary~\ref{cor-qe}.
\end{proof}
By a close inspection of the proof of Theorem~\ref{th-q1} (and its prerequisites  Proposition~\ref{th-crt}, Lemmas~\ref{lem-rn},\ref{lem-rm} and Corollary~\ref{cor-qe}) we can give an explicit  complete axiomatization for the multiplicative theory of the positive rational numbers.
\begin{theorem}[Infinite Axiomatizability of $\langle\mathbb{Q}^+;\times\rangle$]
The following theory completely axiomatizes the structure $\langle\mathbb{Q}^+;\times,{\bf 1},\circ^{-1}\rangle$.
\begin{center}
\begin{tabular}{ll}
($\texttt{M}_1$) \; $\forall x,y,z\,\big(x\cdot (y\cdot z)=(x\cdot y)\cdot z\big)$  & ($\texttt{M}_2$) \; $\forall x\,\big(x\cdot \mathbf{1}=x\big)$ \\
($\texttt{M}_3^\circ$) \; $\forall x\,\big(x \cdot  x^{-1}=\mathbf{1}\big)$ & ($\texttt{M}_4$) \; $\forall x,y\,\big(x\cdot y=y\cdot x\big)$ \\
 ($\texttt{M}_{7,n}^\circ$) \; $\forall
x\,\big(x^n={\bf 1}\longrightarrow x={\bf 1}\big)$ & ($\texttt{M}_{16,n}$) \; $\forall v_1,\ldots,v_\ell\exists x\forall z \bigwedge\hspace{-1.85ex}\bigwedge_{k=1}^{\ell}\big(x^n\cdot v_k \neq z^{m_k}\big)$ \\
Where $n \geqslant 1$ is a natural number, and
 & none of $m_k$'s divide $n$ (i.e., $m_1\nmid n,\cdots,m_\ell\nmid n$).
\end{tabular}
\end{center}
\end{theorem}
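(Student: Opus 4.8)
The plan is to derive the completeness of the listed theory from the quantifier elimination of Theorem~\ref{th-q1}, after first checking soundness. Soundness is routine: $\texttt{M}_1$--$\texttt{M}_4$ are the abelian group axioms and $\texttt{M}_{7,n}^\circ$ holds because $\mathbb{Q}^+$ is torsion-free, while for $\texttt{M}_{16,n}$ one is given $v_1,\dots,v_\ell\in\mathbb{Q}^+$, picks a prime $\mathfrak p$ occurring in the factorization of no $v_k$, and takes $x=\mathfrak p$: the $\mathfrak p$-adic valuation of $x^n\cdot v_k$ is $n$, which is not a multiple of $m_k$ since $m_k\nmid n$, so $x^n\cdot v_k$ is not an $m_k$-th power. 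Write $T$ for the listed theory. Each relation $\Re_n$ is explicitly definable over $T$ by $\Re_n(x)\equiv\exists y\,(x=y^n)$, so the extension $T^{+}$ of $T$ obtained by adjoining these definitions, in the language $\{\times,{\bf 1},\circ^{-1},\{\Re_n\}_{n\geqslant 2}\}$, is conservative; thus $T$ is complete as soon as $T^{+}$ is. Now the only closed term of the language is provably equal to ${\bf 1}$, and $\Re_n({\bf 1})$ is provable from $\texttt{M}_2$, so $T^{+}$ decides every atomic sentence and hence --- \emph{provided $T^{+}$ admits quantifier elimination} --- every sentence. Since $\langle\mathbb{Q}^+;\times,{\bf 1},\circ^{-1},\{\Re_n\}_n\rangle$ is a model of $T^{+}$, this makes $T^{+}$, and therefore $T$, equal to the theory of $\langle\mathbb{Q}^+;\times,{\bf 1},\circ^{-1}\rangle$.

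It thus remains to show that $T^{+}$ admits quantifier elimination; equivalently (by the completeness theorem) that every equivalence used in the proof of Theorem~\ref{th-q1} and its prerequisites holds in every model $\mathfrak M$ of $T^{+}$. Such an $\mathfrak M$ is a torsion-free abelian group (by $\texttt{M}_1$--$\texttt{M}_4$ and $\texttt{M}_{7,n}^\circ$, the latter giving the cancellation law $a^n=b^n\Rightarrow a=b$ and hence the rescalings $a=b\Leftrightarrow a^\eta=b^\eta$ and $\Re_\ell(a)\Leftrightarrow\Re_{\ell\eta}(a^\eta)$), in which moreover the $\Re_n$'s obey the closure laws $\Re_n(a)\wedge\Re_n(b)\Rightarrow\Re_n(ab)$, $\Re_n(a)\Rightarrow\Re_n(a^{-1})$, $\Re_n(a)\Rightarrow\Re_d(a)$ for $d\mid n$, and $\bigwedge_i\Re_{n_i}(a)\Rightarrow\Re_{\mathrm{lcm}\{n_i\}}(a)$ --- the last because a B\'ezout tuple $\sum_ic_i\,n/n_i=1$ turns $\prod_i y_i^{c_i}$ into an $n$-th root of $a$ when $a=y_i^{n_i}$ for each $i$. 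With these facts in hand, the term reductions of the proof of Theorem~\ref{th-q1} go through verbatim and the problem collapses to the equivalence of Corollary~\ref{cor-qe}. Its ``only if'' half is elementary; for the existence half one takes $x=w^n a$ with $a=\prod_i u_i^{-c_i\,n/n_i}$, and the clauses $\neg\Re_{m_k}(v_k x)$ with $m_k\mid n$ reduce, via $\Re_{m_k}(w^n)$, to the hypothesis $\neg\Re_{m_k}(a v_k)$, whereas the clauses with $m_k\nmid n$ are handed over to $\texttt{M}_{16,n}$, applied to the tuple $(v_k a)$ ranging over those $k$ with $m_k\nmid n$, which supplies a $w$ with $\bigwedge_{m_k\nmid n}\neg\Re_{m_k}(w^n\cdot v_k a)$.

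The main obstacle, and the part requiring genuine work, is twofold. First, Lemma~\ref{lem-rn}'s statement that $a=\prod_i u_i^{-c_i\,n/n_i}$ satisfies $\bigwedge_i\Re_{n_i}(u_i a)$ was proved there by passing to prime-exponent sequences and the Chinese Remainder Theorem (Proposition~\ref{th-crt}); to see it directly in $\mathfrak M$ one must give an algebraic computation exhibiting an explicit $n_i$-th root of $u_i a$ built from the witnesses supplied by the hypotheses $\Re_{d_{i,j}}(u_i u_j^{-1})$ --- essentially transporting the linear-combination formula for the solution $x_0$ from $\mathbb{Z}$ to the group $\mathfrak M$. Second, the disequality clauses $x\neq t_\iota$ in Corollary~\ref{cor-qe} force one to know that the solution set of the $\Re$-constraints is nonempty (indeed infinite) in $\mathfrak M$: by the closure laws, once an $a$ as above is a solution this set is the coset $a\mathfrak M^n$ of the subgroup $\mathfrak M^n$ of $n$-th powers with finitely many translates of the subgroups $\mathfrak M^{\mathrm{lcm}(n,m_k)}$ (one for each $m_k\nmid n$) and finitely many points removed, and B.~H.~Neumann's coset-covering lemma shows the remainder is nonempty as long as each index $[\,\mathfrak M^n:\mathfrak M^{\mathrm{lcm}(n,m_k)}\,]$ is infinite --- which, via the isomorphism $c\mapsto c^n$ of $\mathfrak M$ onto $\mathfrak M^n$, equals $[\,\mathfrak M:\mathfrak M^{m'}\,]$ with $m'=m_k/\gcd(n,m_k)>1$, and this index is infinite precisely by (instances of) $\texttt{M}_{16,1}$, which for any $c_1,\dots,c_r$ yields a $c$ with $\bigwedge_i\neg\Re_{m'}(c\,c_i^{-1})$. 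The delicate book-keeping --- verifying at each call of $\texttt{M}_{16,n}$ that all its moduli truly fail to divide $n$, and that $\texttt{M}_{16,1}$ furnishes the needed infinitude of power-cosets --- is where the care goes; the rest is a faithful rerun of the arguments of Theorem~\ref{th-q1} and Lemmas~\ref{lem-rn} and~\ref{lem-rm}.
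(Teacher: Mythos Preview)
Your argument is correct, and it follows the paper's overall architecture (soundness, then completeness via the quantifier-elimination machinery of Theorem~\ref{th-q1} and its prerequisites), but it diverges from the paper at the one genuinely new step: establishing that the solution set of the $\Re$-constraints is \emph{infinite} in an arbitrary model, so that the disequalities $x\neq t_\iota$ can be accommodated. The paper does this by a self-strengthening trick applied directly to $\texttt{M}_{16,n}$: given one witness $x$, it reapplies $\texttt{M}_{16,n}$ to the enlarged tuple $v_1,\dots,v_\ell,x^{\mathfrak p-n}$ with an extra modulus $\mathfrak p>n$, obtaining a new witness $y$ which, since $\Re_{\mathfrak p}(x^{\mathfrak p})$ trivially holds, must differ from $x$; iterating yields infinitely many witnesses. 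Your route instead imports B.~H.~Neumann's coset-covering lemma, reducing the question to the infinitude of each index $[\mathfrak M:\mathfrak M^{m'}]$ for $m'>1$, which you then extract from $\texttt{M}_{16,1}$. Both work; the paper's bootstrapping is entirely elementary and self-contained, whereas your approach is more structural and makes transparent \emph{why} the axiom schema suffices (it forces every proper power subgroup to have infinite index). You are also more explicit than the paper about a point the paper glosses over: Lemma~\ref{lem-rn} was proved via prime factorizations, which are unavailable in an arbitrary model, so one must redo the verification that $a=\prod_i u_i^{-c_i n/n_i}$ satisfies $\bigwedge_i\Re_{n_i}(u_ia)$ purely group-theoretically --- and indeed the identity $u_j a=\big(\prod_i w_{i,j}^{\,c_i q_{i,j}}\big)^{-n_j}$ (with $u_iu_j^{-1}=w_{i,j}^{d_{i,j}}$ and $q_{i,j}=n/\mathrm{lcm}(n_i,n_j)$) can be read off directly from the computation in Proposition~\ref{th-crt}.
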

\begin{proof}
Firstly, we note that the axiom $\texttt{M}_{16,n}$ is equivalent with $\forall v_1,\ldots,v_\ell\exists x \bigwedge\hspace{-1.85ex}\bigwedge_{k=1}^{\ell}
\neg\Re_{m_k}(x^nv_k)$, when no $m_k$ divides $n$ (for $k=1,\cdots,\ell$). Secondly, this axiom implies the existence of infinitely many such $x$'s. Since for fixed  $v_1,\cdots,v_\ell$ there is some $x$ such that $\bigwedge\hspace{-1.85ex}\bigwedge_{k=1}^{\ell}
\neg\Re_{m_k}(x^nv_k)$. Now, fix a prime $\mathfrak{p}>n$. Then for $v_1,\cdots,v_\ell,x^{\mathfrak{p}-n}$ and the sequence $m_1,\cdots,m_k,\mathfrak{p}$ (none of which divides $n$) by this axiom there exists some $y$ such that $\bigwedge\hspace{-1.85ex}\bigwedge_{k=1}^{\ell}
\neg\Re_{m_k}(y^nv_k)\wedge\neg\Re_\mathfrak{p}(y^n
x^{\mathfrak{p}-n})$. Then $y=x$ implies   $\neg\Re_\mathfrak{p}(x^nx^{\mathfrak{p}-n})$ or $\neg\Re_\mathfrak{p}(x^{\mathfrak{p}})$ a contradiction; so $y\neq x$. Continuing this way, by induction, if there are $x_1,\cdots,x_m$ such that $\bigwedge\hspace{-1.85ex}\bigwedge_{i\neq j} x_i\neq x_j
$ and  $\bigwedge\hspace{-1.85ex}\bigwedge_{i=1}^{m}
\bigwedge\hspace{-1.85ex}\bigwedge_{k=1}^{\ell}
\neg\Re_{m_k}(x_i^nv_k)$ then by this axiom for the sequence $v_1,\cdots,v_\ell,x_1^{\mathfrak{p}-n},\cdots,x_m^{\mathfrak{p}-n}$ and the numbers $m_1,\cdots,m_\ell,\mathfrak{p},\cdots,\mathfrak{p}$ (none of which divides $n$) there exists some $y$ such that  $\bigwedge\hspace{-1.85ex}\bigwedge_{k=1}^{\ell}
\neg\Re_{m_k}(y^nv_k)\wedge
\bigwedge\hspace{-1.85ex}\bigwedge_{i=1}^{m}
\neg\Re_{\mathfrak{p}}(y^nx_i^{\mathfrak{p}-n})$. Again it can be seen that $y$ cannot be equal to any of $x_i$'s (for $i=1,\cdots,m$) and so could be taken as $x_{m+1}$. Thus, the axiom $\texttt{M}_{16,n}$ implies that for any sequence $\{v_k\}_{k=1}^{\ell}$ of positive rationals and any sequence $\{m_k\}_{k=1}^{\ell}$ of integers none of which divides $n$ there are infinitely many positive rationals $x$ such that for all $k=1,\cdots,\ell$, $x^nv_k$ is not an $m_k$'s power of any rational number. To see  that $\texttt{M}_{16,n}$ is true (in the set of positive rationals) take $x$ to be a prime number that does not appear in the (unique) factorizations of any of $v_k$'s (for $k=1,\cdots,\ell$); cf. the proof of Lemma~\ref{lem-rm}. So, the axiomatization is sound. To see that it is also complete,  we observe  that the proofs of Theorem~\ref{th-q1}, Lemmas~\ref{lem-rn},\ref{lem-rm} and Corollary~\ref{cor-qe} can go through by using these axioms only, noting that in the proof of Corollary~\ref{cor-qe} and Lemma~\ref{lem-rm} we need the existence of infinitely many $x$'s such that  $\neg\Re_{m_k}(x^n\cdot av_k)$ holds when $m_k$ does not divide $n$ (when $m_k$ divides $n$ then any $x$ satisfies $\neg\Re_{m_k}(x^n\cdot av_k)$ by the assumption $\neg\Re_{m_k}(av_k)$), and this is exactly what the axiom $\texttt{M}_{16,n}$ provides us.
\end{proof}
\begin{theorem}[No Finite Axiomatization for  $\langle\mathbb{Q}^+;\times\rangle$]\label{thm-infq+}
The theory of the structures $\langle\mathbb{Q}^+;\times\rangle$ is  not finitely axiomatizable.
\end{theorem}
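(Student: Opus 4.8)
The plan is to follow the pattern of Theorems~\ref{thm-infc} and~\ref{thm-infr}. Assume, towards a contradiction, that the multiplicative theory $T$ of $\langle\mathbb{Q}^+;\times\rangle$ is finitely axiomatizable. Since the axiom system $\Sigma=\{\texttt{M}_1,\texttt{M}_2,\texttt{M}_3^\circ,\texttt{M}_4\}\cup\{\texttt{M}_{7,n}^\circ\mid n\geqslant 1\}\cup\{\texttt{M}_{16,n}\}$ of the previous theorem completely axiomatizes this structure and every sentence in $\Sigma$ is true in $\mathbb{Q}^+$, the theories $T$ and $\Sigma$ coincide; hence a finite axiomatization of $T$ would be provable from $\Sigma$, and therefore — by compactness — already from a finite $\Sigma_0\subseteq\Sigma$, so $\Sigma_0$ itself axiomatizes $T$. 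It then suffices to exhibit, for an arbitrary finite $\Sigma_0\subseteq\Sigma$, a structure that satisfies $\Sigma_0$ but not $T$.

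Given $\Sigma_0$, let $N$ be larger than every index $n$ and every modulus $m_k$ occurring in the finitely many instances of $\texttt{M}_{7,n}^\circ$ and $\texttt{M}_{16,n}$ in $\Sigma_0$, and fix a prime $\mathfrak{p}>N$. As the model I would take the multiplicative subset of $\mathbb{R}^+$ (closed under multiplication and inverses, hence an abelian group)
$$\mathbb{R}^+/\mathfrak{p}\;=\;\{\prod_{i<\ell}\mathfrak{p}_i^{\,r_i}\mid\ell\in\mathbb{N},\;r_i\in\mathbb{Q}/\mathfrak{p}\},$$
of the same shape as the structures $\mathbb{C}/M$ and $\mathbb{R}^+/M$ used above, where $\mathbb{Q}/\mathfrak{p}=\{a/\mathfrak{p}^k\mid a\in\mathbb{Z},\,k\in\mathbb{N}\}$ as in Definition~\ref{def-a}. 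Being a torsion-free abelian group, $\mathbb{R}^+/\mathfrak{p}$ satisfies $\texttt{M}_1,\texttt{M}_2,\texttt{M}_3^\circ,\texttt{M}_4$ and all the $\texttt{M}_{7,n}^\circ$. For an instance of $\texttt{M}_{16,n}$ in $\Sigma_0$ with moduli $m_1,\dots,m_\ell$ (so $m_k\nmid n$ and, by the choice of $N$, $n<\mathfrak{p}$ and each $m_k<\mathfrak{p}$) and for any $v_1,\dots,v_\ell\in\mathbb{R}^+/\mathfrak{p}$, one can take $x$ to be a prime $\mathfrak{p}_q$ not appearing in the factorizations of the $v_k$'s: then $x^n v_k$ carries the integer exponent $n$ at $\mathfrak{p}_q$, and since $m_k\nmid n$ while $\gcd(m_k,\mathfrak{p})=1$ we have $\bigl(m_k\cdot(\mathbb{Q}/\mathfrak{p})\bigr)\cap\mathbb{Z}=m_k\mathbb{Z}$ and $n\notin m_k\mathbb{Z}$, so $n$ is not $m_k$ times any exponent available in $\mathbb{R}^+/\mathfrak{p}$ and $x^n v_k$ is not an $m_k$-th power there; thus $x$ witnesses $\forall z\,\bigwedge_k(x^n v_k\neq z^{m_k})$. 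Hence $\mathbb{R}^+/\mathfrak{p}\models\Sigma_0$, and so, under our assumption, $\mathbb{R}^+/\mathfrak{p}\models T$.

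This is the contradiction: the sentence $\forall v\,\exists x\,\forall z\,(x\cdot v\neq z^{\mathfrak{p}})$ — the instance of $\texttt{M}_{16,1}$ with the single modulus $\mathfrak{p}$ (legitimate since $\mathfrak{p}\nmid 1$) — lies in $T$, being true in $\mathbb{Q}^+$ (for $v\in\mathbb{Q}^+$ take $x$ a prime missing from the factorization of $v$, so that $x\cdot v$ has exponent $1$ at that prime and is not a $\mathfrak{p}$-th power of any rational), whereas it fails in $\mathbb{R}^+/\mathfrak{p}$, where \emph{every} element is a $\mathfrak{p}$-th power because $\mathbb{Q}/\mathfrak{p}$ is $\mathfrak{p}$-divisible, i.e.\ $\mathfrak{p}\cdot(\mathbb{Q}/\mathfrak{p})=\mathbb{Q}/\mathfrak{p}$ (dividing every exponent by $\mathfrak{p}$ keeps it inside $\mathbb{Q}/\mathfrak{p}$), so that $x\cdot v=z^{\mathfrak{p}}$ is always solvable and no $x$ can satisfy the inner matrix. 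The one delicate point is the bookkeeping on $\mathfrak{p}$: it must be chosen large enough that all moduli occurring in $\Sigma_0$ are coprime to it — which is exactly what keeps the finitely many $\texttt{M}_{16,n}$-instances of $\Sigma_0$ true in $\mathbb{R}^+/\mathfrak{p}$, since for such moduli ``being an $m$-th power'' behaves there as in $\mathbb{Q}^+$ — while $\mathfrak{p}$ itself makes $\mathbb{R}^+/\mathfrak{p}$ $\mathfrak{p}$-divisible and so destroys the instance of $\texttt{M}_{16,1}$ above. Everything else is a routine computation with prime exponents.
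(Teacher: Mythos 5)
Your proof is correct, but it reaches the contradiction through a different countermodel than the paper does. The paper's model is $\{r\cdot\boldsymbol\omega_{\mathfrak{p}}^{k}\mid k\in\mathbb{N},\,r\in\mathbb{Q}^{+}\}$, obtained by adjoining a $\mathfrak{p}$-th root of unity to $\mathbb{Q}^{+}$: this keeps every instance of $\texttt{M}_{16,n}$ and all $\texttt{M}_{7,n}^{\circ}$ with $n<\mathfrak{p}$, but destroys torsion-freeness at $\mathfrak{p}$, i.e.\ it falsifies $\texttt{M}_{7,\mathfrak{p}}^{\circ}$. You instead keep the group torsion-free (so \emph{all} of $\texttt{M}_{7,n}^{\circ}$ survive) and sabotage the other scheme: by allowing exponents in $\mathbb{Q}/\mathfrak{p}$ you make the group $\mathfrak{p}$-divisible, which kills the instance of $\texttt{M}_{16,1}$ with modulus $\mathfrak{p}$ while the coprimality computation $m_k\cdot(\mathbb{Q}/\mathfrak{p})\cap\mathbb{Z}=m_k\mathbb{Z}$ preserves the finitely many instances in $\Sigma_0$. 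Both arguments are sound; yours has the advantage of staying inside $\mathbb{R}^{+}$ and of reusing verbatim the $\mathbb{Q}/M$-exponent device of Theorems~\ref{thm-infc} and~\ref{thm-infr} (so the three non-finite-axiomatizability proofs become instances of one construction), whereas the paper's choice makes the verification of $\texttt{M}_{16,n}$ slightly more immediate (integer exponents only) at the cost of leaving the reals. Your compactness preamble and the bookkeeping on $\mathfrak{p}$ are handled correctly; the only stylistic remark is that the divisibility check for $\texttt{M}_{16,n}$ is exactly the observation that for $\gcd(m_k,\mathfrak{p})=1$ the relation $\Re_{m_k}$ restricted to integer exponents behaves in $\mathbb{R}^{+}/\mathfrak{p}$ as in $\mathbb{Q}^{+}$, which you do state.
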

\begin{proof}
For any finite number we provide  a model for the axioms $\texttt{M}_{1},\texttt{M}_{2},\texttt{M}_{3}^\circ,
\texttt{M}_{4},\texttt{M}_{16,n}$
and that  finite number of the instances of $\texttt{M}_{7,n}^\circ$ in which some other instances of $\texttt{M}_{7,n}^\circ$ fails. Let $\textswab{p}$ be a sufficiently large prime, and consider   $\{r\cdot\boldsymbol\omega_{\textswab{p}}^k\mid  k\in\mathbb{N}, r\in\mathbb{Q}^+\}$ (together with the multiplication operation). This is a multiplicative subset of the complex numbers whose every member has a unique factorization as $\boldsymbol\omega_{\textswab{p}}^{k}
\prod_{i}\mathfrak{p}_i^{n_i}$
for $0\!\leqslant\!k\!<\!\textswab{p}$ and $n_i\in\mathbb{Z}$. It is straightforward to see that this structure satisfies $\texttt{M}_{1},\texttt{M}_{2},\texttt{M}_{3}^\circ,\texttt{M}_{4}$ and the instances of $\texttt{M}_{7,n}^\circ$ for $n\!<\!\textswab{p}$ but not $\forall
x\,\big(x^{\textswab{p}}={\bf 1}\longrightarrow x={\bf 1}\big)$ since $(\boldsymbol\omega_{\textswab{p}})^{\textswab{p}}=1$ but $\boldsymbol\omega_{\textswab{p}}\neq 1$. It also satisfies $\texttt{M}_{16,n}$ since for any given $v_1,\cdots,v_\ell$ it suffices to take $x$ to be a (sufficiently large) prime that does not appear in the unique factorizations of $v_k$'s.
\end{proof}

\section{Conclusions}
The theory of the multiplication  of the non-negative rational numbers $\langle\mathbb{Q}^{\geqslant 0};\times\rangle$ could also be completely axiomatized by adding the axiom $\forall x\,\big(x\cdot \mathbf{0}=\mathbf{0}=\mathbf{0}^{-1}\big)$ to the axioms of $\langle\mathbb{Q}^{+};\times\rangle$ (and relativizing the axioms $\texttt{M}_3^\circ$ and $\texttt{M}_{16}$ to non-zero elements) just like Proposition~\ref{thm-r0} (with a proof in lines of that of Theorem~\ref{thm-r}). Also by adding the positivity property ($\mathcal{P}(x) \equiv x>0$) to the language we could completely axiomatize the multiplicative theory of the rational numbers $\langle\mathbb{Q};\times\rangle$ (just like Theorem~\ref{thm-r}). Indeed, the theory of the structures $\langle\mathbb{Q};\times,\circ^{-1},{\bf 0},{\bf 1},{\bf -1},\mathcal{P}\rangle$ admits quantifier elimination but unfortunately  $\mathcal{P}$ is not definable in $\langle\mathbb{Q};\times,\circ^{-1},{\bf 0},{\bf 1},{\bf -1}\rangle$. To see this, consider the function from $\mathbb{Q}$ into itself that maps $-1,0,1$ to themselves, and maps each rational number $r$ whose unique factorization is $(-1)^\iota\prod_{j\in\mathbb{N}}\mathfrak{p}_j^{n_j}$ (where cofinitely many of the integers $n_j$'s are zero) to $(-1)^{n_0}r$. This function is  bijective  and preserves the multiplication operation but does not preserve the positivity property, since $\frac{2}{3}$ which is positive is mapped to $-\frac{2}{3}$ which is not positive. We leave open the problem of finding a $\langle\mathbb{Q};\times\rangle-$definable language $\mathcal{L}$ such that $\langle\mathbb{Q};\mathcal{L}\rangle$ admits quantifier elimination. Overall, the theory of the structure $\langle\mathbb{Q};\times\rangle$ is decidable while the theory of the structure $\langle\mathbb{Q};+,\times\rangle$ is not (proved by Robinson~\cite{robinson}); let us note that the decidability of the theories of $\langle\mathbb{R};\times\rangle$ and $\langle\mathbb{C};\times\rangle$ were inherited from the decidability of the theories of $\langle\mathbb{R};+,\times\rangle$ and $\langle\mathbb{C};+,\times\rangle$ by Tarski's results.
Interestingly, the axioms of $\langle\mathbb{R};\times\rangle$ in Theorem~\ref{thm-r} are the laws of signs (positivity and negativity) and multiplication in the high school, and the axioms of $\langle\mathbb{C};\times\rangle$ in Theorem~\ref{thm-c} are the laws learned in the freshmen calculus lessons.
As for the integers, a complete axiomatization, by the method of quantifier elimination, was given for $\langle\mathbb{N}^+;\times\rangle$ in~\cite{cegielski} (see also~\cite{smorynski}). This result can 
be extended to  the theory of the structure $\langle\mathbb{N};\times\rangle$ (cf.~\cite[Exercise~23.17]{monk}). Also, the theory of the structure $\langle\mathbb{Z};\times\rangle$ can be proved to be decidable by the methods of~\cite{cegielski} by providing an explicit (and complete and decidable) axiomatization with the method of quantifier elimination (in a suitable $\{\times\}$-definable  language).

All of the new and old results of the paper are summarized in the following table, in which (only)  the structures $\langle\mathbb{N};+,\times\rangle$, $\langle\mathbb{Z};+,\times\rangle$ and $\langle\mathbb{Q};+,\times\rangle$ are undecidable (while the rest of the structures, $\langle\mathbb{N};+\rangle$, $\langle\mathbb{Z};+\rangle$, $\langle\mathbb{Q};+\rangle$, $\langle\mathbb{R};+\rangle$, $\langle\mathbb{C};+\rangle$, $\langle\mathbb{N};\times\rangle$, $\langle\mathbb{Z};\times\rangle$, $\langle\mathbb{Q};\times\rangle$, $\langle\mathbb{R};\times\rangle$, $\langle\mathbb{C};\times\rangle$, $\langle\mathbb{R};+,\times\rangle$ and $\langle\mathbb{C};+,\times\rangle$,  are decidable and thus axiomatizable by  recursively enumerable sets of sentences).

\begin{table}[h]
\begin{center}
\begin{tabular}{|c||c|c|c|c|c|}
\hline
         & $\mathbb{N}$ & $\mathbb{Z}$ & $\mathbb{Q}$ & $\mathbb{R}$ &
         $\mathbb{C}$ \\
\hline \hline  $\{+\}$ & \cite[Th.~32E]{enderton} &
        Prop.~\ref{thm-z}
        & Prop.~\ref{thm-a} & Prop.~\ref{thm-a}
        & Prop.~\ref{thm-a} \\
 \hline
        $\{\times\}$ & \cite{cegielski} & \cite{cegielski} (\& \S~5)
        & \!\!\!Th.~\ref{th-q1} (\& \S~5)\!\!\! & Th.~\ref{thm-r}  &
        Th.~\ref{thm-c} \\
  \hline\hline $\{+,\times\}$ &
              \!\!\!\cite[Cor.~35A]{enderton}\!\!\! & \cite[Th.~16.7]{monk}
        & \cite{robinson} & \!\!\!\cite[Th.~21.36]{monk}\!\!\! &
        \!\!\!\cite[Th.~21.9]{monk}\!\!\!  \\
 \hline
    \end{tabular}
\end{center}
\end{table}

\bibliographystyle{fundam}


\end{document}